\tikzset{ext/.style={circle, draw,inner sep=1pt},int/.style={circle,draw,fill,inner sep=1pt},nil/.style={inner sep=1pt}}
\tikzset{exte/.style={circle, draw,inner sep=3pt},inte/.style={circle,draw,fill,inner sep=3pt}}
\tikzset{diagram/.style={matrix of math nodes, row sep=3em, column sep=2.5em, text height=1.5ex, text depth=0.25ex}}
\tikzset{diagram2/.style={matrix of math nodes, row sep=0.5em, column sep=0.5em, text height=1.5ex, text depth=0.25ex}}
\theoremstyle{plain}
  \newtheorem{thm}{Theorem}
  \newtheorem{defi}{Definition}[section]
  \newtheorem{prop}[defi]{Proposition}
  \newtheorem{cor}{Corollary}
  \newtheorem{lemma}[defi]{Lemma}
\theoremstyle{definition}
  \newtheorem{ex}[defi]{Example}
  \newtheorem{rem}[defi]{Remark}
\newcommand{\alg}[1]{\mathfrak{{#1}}}
\newcommand{\eref}[1]{\eqref{#1}} 
\newcommand{\Hom}{\mathrm{Hom}}
\newcommand{\R}{{\mathbb{R}}}
\newcommand{\Z}{{\mathbb{Z}}}
\newcommand{\Q}{{\mathbb{Q}}}
\newcommand{\U}{{\mathcal{U}}}
\newcommand{\X}{{\mathcal{X}}}
\renewcommand{\L}{{{L}}}
\newcommand{\op}{\mathcal}
\newcommand{\Lie}{\mathsf{Lie}}
\newcommand{\coker}{\mathrm{coker}}
\newcommand{\Ass}{\mathsf{Assoc}}
\newcommand{\Com}{\mathsf{Com}}
\newcommand{\bbS}{\mathbb{S}}
\newcommand{\bpm}{\begin{pmatrix}}
\newcommand{\epm}{\end{pmatrix}}
\newcommand{\mU}{\mathcal{U}}
\DeclareMathOperator{\End}{End}
\newcommand{\gr}{\mathit{gr}}
\newcommand{\GL}{\mathrm{GL}}
\newcommand{\Out}{\mathrm{Out}}
\newcommand{\Aut}{\mathrm{Aut}}
\newcommand{\Fin}{\mathrm{Fin}}
\newcommand{\dimens}{\mathop{dim}}
\newcommand{\FreeLie}{\mathrm{FreeLie}}
\newcommand{\diag}{{\mathrm{diag}}\,}
\newcommand{\Tot}{{\mathrm{Tot}}}
\newcommand{\CH}{{\mathrm{CH}}}
\newcommand{\CHH}{{\mathcal{CH}}}
\newcommand{\Mod}{\mathrm{mod}}
\newcommand{\TOP}{{\mathrm{Top}}}
\newcommand{\coLie}{\mathsf{coLie}}
\newcommand{\hRmod}{\operatorname{hRmod}}
\newcommand{\Rmod}{\operatorname{Rmod}}
\newcommand{\calA}{{\mathcal A}}
\newcommand{\calL}{{\mathcal L}}
\newcommand{\Harr}{{ \mathrm{Harr} }}
\newcommand{\Sh}{{\mathit{Sh}}}
\newcommand{\Chev}{\mathbf{C}} 
\newcommand{\ChevMod}{\Chev_{\rm mod}}
\newcommand{\Ha}{\calL} 
\newcommand{\HaMod}{\calL_{\rm mod}} 
\newcommand{\hChev}{\hat{\Chev}} 
\newcommand{\hChevMod}{\hChev_{\rm mod}}
\newcommand{\hHa}{\hat \calL} 
\begin{document}
\title{Hochschild-Pirashvili homology on suspensions and\\ representations of $\Out(F_n)$}

\author{Victor Turchin}
\address{Department of Mathematics\\
  Kansas State University\\
  138 Cardwell Hall\\
  Manhatan, KS 66506, USA}
  \email{turchin@ksu.edu}
\author{Thomas Willwacher}
\address{Institute of Mathematics \\ University of Zurich \\  
Winterthurerstrasse 190 \\
8057 Zurich, Switzerland}
\email{thomas.willwacher@math.uzh.ch}

\thanks{V.T. acknowledges partial support by the MPIM, Bonn, and the IHES . T.W. acknowledges partial support by the Swiss National Science Foundation (grant 200021\_150012) and the SwissMap NCCR, funded by the Swiss National Science Foundation}

 \subjclass[2010]{55N99;19D55;13D03}

\begin{abstract}
We show that the Hochschild-Pirashvili homology on any suspension admits the so called Hodge splitting.
For a map between suspensions $f\colon \Sigma Y\to \Sigma Z$, the induced map in the 
Hochschild-Pirashvili homology preserves this splitting if $f$ is a suspension. If $f$ is not a suspension, we 
show that the splitting is preserved only as a filtration.   
%
%
As a special case, we obtain that the  Hochschild-Pirashvili homology on wedges of circles produces new representations  of $\Out(F_n)$ that do not factor in general through $\GL(n,\Z)$. The obtained representations are naturally filtered in such a way that the  action on  the graded quotients  does factor through $\GL(n,\Z)$.


%
\end{abstract}

\maketitle

\setcounter{section}{-1}

\section{Introduction}\label{s:HPH}

The higher Hochschild homology is a bifunctor introduced  by T.~Pirashvili in~\cite{pirash00} that 
to a topological space (simplicial set) and a (co)commutative (co)algebra assigns a graded vector space. 
Informally speaking this functor is a way to \lq\lq integrate\rq\rq{} a (co)algebra over a given space.  Specialized to a circle the result is the usual Hochschild homology. 
The precursor to the higher Hochschild homology was the discovery of the Hodge splitting in the
usual Hochschild homology of a commutative algebra~\cite{GerstSchack,Loday}. Indeed, the most surprising and perhaps the motivating result for T.~Pirashvili to write his seminal work~\cite{pirash00} was 
the striking fact that the higher Hochschild homology on a sphere of any positive dimension also 
admits the Hodge splitting and moreover the terms of the splitting up to a regrading depend only on the parity of the dimension of the sphere. With this excuse to be born, the higher Hochschild homology is nowadays a widely used tool that has various applications including 
 the string topology and more generally  the study of mapping  and   embedding spaces~\cite{pirash00,AroneTur1,AroneTur2,GTZ0,Patras,PatrasThomas,Song,SongTur}.  It also has very interesting and deep generalizations such as the topological higher Hochschild homology~\cite{CarlDougDun,Schl} and factorization homology~\cite{AyFr,Ginot,GTZ,Lurie}. 

In our work we study the very nature of the Hodge splitting. In particular we show that it always takes place for suspensions.   
 Moreover, it will be clear from the construction that only suspensions and spaces rationally homology equivalent to them have this property. 
For any suspension $\Sigma Y$,  the terms of the splitting depend in some polynomial way on 
$\tilde H_*\Sigma Y$, which in particular explains Pirashvili\rq{}s result for spheres. We also show that if a map $f\colon \Sigma Y\to\Sigma Z$ is a suspension, than the
induced map in the Hochschild-Pirashvili homology preserves the splitting and is determined by the map
 $f_*\colon \tilde H_*\Sigma Y\to \tilde H_*\Sigma Z$. In case $f$ is not a suspension, the Hodge splitting is preserved only as a filtration. We explain how the induced map between different layers is computed from
 the rational homotopy type of $f$.
 
 We treat more carefully the case of wedges of circles and discover certain representations of the group $\Out(F_n)$ 
 of outer automorphisms of a free group\footnote{These representations appear as application to the hairy 
 graph-homology computations in the study of  the spaces of long embeddings, higher dimensional string links, and the deformation theory of the little discs operads~\cite{AroneTur2,SongTur,Turchin1,TW,TW2}.} that have the smallest known dimension among those that don\rq{}t factor through $\GL(n,\Z)$. 
  

\subsection*{Notation}
We work over rational numbers $\Q$ unless otherwise stated. All vector spaces are assumed to be vector spaces over $\Q$. Graded vector spaces are vector spaces with a $\mathbb{Z}$-grading, and we abbreviate the phrase ``differential graded'' by dg as usual.
We generally use  homological conventions, i.e., the differentials will have degree $-1$. We denote by 
$gVect$ and $dgVect$ the category of graded vector spaces and the category of chain complexes respectively. For a chain complex or a graded vector space $C$ we denote by $C[k]$ its $k$-th desuspension.

We use freely the language of operads. A good introduction into the subject can be found in the textbook \cite{lodayval}, whose conventions we mostly follow. We use the notation $\op P\{k\}$ for the $k$-fold operadic suspension. 
The operads governing commutative, associative and Lie algebras are denoted by $\Com$, $\Ass$, and $\Lie$ respectively. By $\Com_+$ we denote the commutative non-unital operad and by $\coLie$ the cooperad dual to $\Lie$. 

For a category ${\mathcal C}$, we denote by ${\mathrm{mod}}{-}{\mathcal C}$ the category of cofunctors
${\mathcal C}^{op}\to dgVect$ to chain complexes. The objects of ${\mathrm {mod}}{-}{\mathcal C}$ will be called {\it right ${\mathcal C}$-modules}. In the following section, ${\mathcal C}$ is either the category $\Gamma$ of finite pointed sets or the category $\Fin$ of finite sets. Abusing notation we denote the set $\{1,\ldots,k\}$ by $k$ and the set $\{*,1,\ldots,k\}$ based at $*$ by $k_*$. We will consider the following examples of right $\Gamma$ and $\Fin$-modules:
\begin{itemize}
\item For $X$ some topological space we can consider the $\Fin$-module sending a finite set $S$ to the singular chains on the mapping space $C_*(X^S)$. We denote this $\Fin$-module by $C_*(X^\bullet)$.
\item Similarly, to a basepointed space $X_*$ we assign a $\Gamma$-module $C_*(X_*^\bullet)$ sending a pointed set $S_*$ to $C_*(X_*^{S_*})$, where now $X_*^{S_*}$ is supposed to be the space of pointed maps.
\item To a cocommutative coalgebra $C$ we assign the $\Fin$-module sending the finite set $S$ to the tensor product $C^{\otimes S}\cong \bigotimes_{s\in S} C$. We denote this $\Fin$-module by $C^{\otimes\bullet}$. If not otherwise stated we assume that $C$ is non-negatively graded and simply connected.
\item If in addition $M$ is a $C$-comodule (e.g., $M=C$) one can construct a $\Gamma$-module $M\otimes C^{\otimes\bullet}$ such that $S_*\mapsto M\otimes \bigotimes_{s\in S_*\setminus\{*\}} C$. 
\item Dually, if $M$ is a module over a commutative algebra $A$, then $M\otimes A^{\otimes\bullet}$ 
is a {\it left} $\Gamma$-module, and its objectwise dual $\left(M\otimes A^{\otimes\bullet}\right)^\vee$
is a right $\Gamma$-module.
\end{itemize}

A topological space is said of finite type if all its homology groups are finitely generated in every degree. 

Two spaces are said rationaly homology equivalent if there is a zigzag of maps between them, such that 
its every map induces an isomorphism in rational homology.

The completed tensor product is denoted by $\hat \otimes$.

\subsection*{Main results}

In the paper for simplicity of exposition we stick to the contravariant Hochschild-Pirashvili homology that is to the one assigned to 
right $\Fin$ and $\Gamma$ modules.  One should mention however that all the results can be easily adjusted to the covariant case as well.

There are two ways to define the higher Hochschild homology. In the first combinatorial way, 
for a  space $X$ (respectively pointed space $X_*$) obtained as a realization of a (pointed) finite simplicial set
$\X_\bullet\colon\Delta^{op}\to \Fin$ (respectively $\X_\bullet\colon \Delta^{op}\to\Gamma$), the higher Hochschild homology $HH^X(\L)$ (respectively $HH^{X_*}(\L_*)$) can be computed as the homology of the totalization of the cosimplicial chain complex $\L\circ \X\colon\Delta\to dgVect$ (respectively 
 $\L_*\circ \X_*\colon\Delta\to dgVect$).~\footnote{This definition can also be adjusted to realizations of any simplicial sets non-necessarily finite by using the right Kan extention of $\L$ (respectively $\L_*$) to
 the category of all  (pointed) sets~\cite{pirash00}.}

In another definition,  for a right $\Fin$-module $\L$ (respectively right $\Gamma$-module $\L_*$) and a topological space $X$ (respectively pointed space $X_*$), the {\it  higher Hochschild homology} that we also call {\it  Hochschild-Pirashvili homology} $HH^X(\L)$ (respectively $HH^{X_*}(\L_*)$) is the homology of the complex of homotopy natural transformations $C_*(X^\bullet) \to \L$ (respectively $C_*(X_*^\bullet) \to \L_*$)~\cite{pirash00,GTZ}. 

The fact that the two definitions are equivalent  is implicitly
 shown  in the proof of~\cite[Theorem~2.4]{pirash00} by Pirashvili, see also~\cite[Proof of Proposition~4]{GTZ} and~\cite[Proposition~3.4]{Song}. 

In case $\L=C^{\otimes\bullet}$ (respectively $\L=M\otimes C^{\otimes\bullet}$), we denote the higher Hochschild homology as 
$HH^{X}(C)$ (respectively $HH^{X_*}(C,M)$).\footnote{This particular case of higher Hochschild homology  is also called topological factorisation (or chiral) cohomology, see for example~\cite{AyFr,GTZ}.}
 
 In our paper the combinatorial definition will be used only for wedges of circles as we want to treat this case more explicitly. Later in the paper we  show that 
 for wedges of circles the first and the second  definitions  produce   identical complexes. 
 
Any map $f:X \to Y$ (respectively basepoint preserving map $X_*\to Y_*$) induces a map $f^*: HH^{Y}(\L)\to HH^X(\L)$ (respectively $HH^{Y_*}(\L_*)\to HH^{X_*}(\L_*)$).
Two homotopic maps (respectively basepoint homotopic maps) induce the same map in higher Hochschild homology.
It is also clear from the (first) definition that in case $f$ is a rational homology equivalence, then the induced map $f^*$ is an isomorphism.   
   One has a functor $u\colon\Gamma\to\Fin$ that forgets the basepoint.  If $X=X_*$ and $\L_*=\L\circ u$, then
\begin{equation}\label{eq:base}
HH^{X}(\L)=HH^{X_*}(\L_*).
\end{equation}

In  case we take $X$ and $X_*$ to be a wedge of $n$ circles $\vee_n S^1$,
the automorphism group $\Aut(F_n)$ acts on $\vee_n S^1$ up to homotopy by basepoint preserving maps and hence we obtain a representation of $\Aut(F_n)$ on $HH^{\vee_n S^1}(\L_*)$.
Similarly, the outer automorphism group $\Out(F_n)$ acts on $\vee_n S^1$ up to homotopy and hence we obtain a representation of $\Out(F_n)$ on $HH^{\vee_n S^1}(\L)$.
While this result should at least morally be known to experts, the representations of $\Out(F_n)$ arising in this manner seem to have received little attention in the literature.
We will study a few special cases. The representations that we obtain inherit an additional filtration (the Hodge or Poincar\'e-Birkhoff-Witt filtration) such that the associated graded representation factors through $\GL(n,\Z)$.
We show that in general the representations of $\Out(F_n)$ thus obtained do not factor through $\GL(n,\Z)$, but are nontrivial iterated extensions of $\GL(n,\Z)$ representations.

In particular, it is an open problem to determine the lowest dimensional representations of $\Out(F_n)$  that do not factor through $\GL(n,\Z)$.\footnote{One assumes $n\geq 3$ as $\Out(F_2)=\GL(2,\Z)$.}
A lower bound has been obtained by D. Kielak \cite{Kielak}, who showed that the dimension must be at least
\[
 {n+1 \choose 2}.
\]
For $n=3$ the lower bound was refined to $7$ (instead of $6$) \cite{Kielak2}.
We obtain an upper bound as follows.

\begin{thm}\label{thm:outfrrep}
For $n\geq 3$, the representations of $\Out(F_n)$ on $HH^{\vee_n S^1}(\Chev(\alg g))$, where $\Chev(\alg g)$ is the Chevalley complex of a free Lie algebra $\alg g=\FreeLie(x)$ in one generator $x$ of odd degree, contain a direct summand representation which does not factor through $\GL(n,\Z)$ and has dimension
\[
 \frac{n(n^2+5)}{6}.
\]
In particular, for $n=3$ this representation saturates the lower bound $7$ obtained in~\cite{Kielak2}.
\end{thm}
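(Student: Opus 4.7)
The strategy is to apply the Hodge splitting framework developed in the paper and to isolate a concrete direct summand of the claimed dimension.

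Since $\vee_n S^1\simeq \Sigma(\{*\}\sqcup\{1,\dots,n\})$ is a suspension, the main Hodge splitting result gives a decomposition
\[
HH^{\vee_n S^1}(\Chev(\alg g))\cong \bigoplus_{k\ge 1} HH^{(k)}
\]
as a graded vector space, where each piece $HH^{(k)}$ is an explicit Schur-functor construction built from $V=\tilde H_1(\vee_n S^1;\Q)\cong \Q^n$ and the underlying graded vector space of $\Chev(\alg g)$. The full $\Out(F_n)$-action preserves only the associated filtration $F_{\ge k}$; the associated graded carries the standard action of $\GL(n,\Z)$ through $\Out(F_n)\twoheadrightarrow \GL(n,\Z)$. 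For a map that is not a suspension, the off-diagonal pieces between Hodge layers are computed by the paper's formula from its rational homotopy type.

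For $\alg g=\FreeLie(x)$ with $x$ of odd degree, graded anti-symmetry combined with graded Jacobi forces $[x,[x,x]]=0$, so $\alg g$ is two-dimensional with basis $\{x,[x,x]\}$. The shifted generator $[x,x][1]$ sits in odd degree, so
\[
\Sym^k\bigl([x,x][1]\otimes V\bigr)\cong \Lambda^k V
\]
concentrates in a single cohomological degree, and for $k=1,2,3$ these three degrees are distinct. Under the Hodge decomposition these three pieces live in Hodge weights $1,2,3$; a bidegree argument (cohomological degree versus Hodge weight) shows that none of them can mix with any other Hodge piece under $\Out(F_n)$. Consequently $V\oplus \Lambda^2 V\oplus \Lambda^3 V$ is an honest $\Out(F_n)$-invariant direct summand, of total dimension $n+\binom{n}{2}+\binom{n}{3}=n(n^2+5)/6$.

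To see that this summand does not factor through $\GL(n,\Z)$, it suffices to exhibit a single element of the Torelli subgroup $\mathrm{IA}_n:=\ker(\Out(F_n)\to \GL(n,\Z))$ that acts nontrivially. I would take the Magnus-type automorphism $\tau\colon x_1\mapsto [x_2,x_3]\cdot x_1$, $x_i\mapsto x_i$ for $i>1$, which acts trivially on $H_1$ but whose rational homotopy class has a nonzero quadratic term $[x_2,x_3]$. Feeding this into the paper's explicit formula for the induced map between Hodge layers produces a nonvanishing off-diagonal contribution $\Lambda^3 V\to V$ (contracting two of the three indices of $\Lambda^3 V$ through $[x_2,x_3]$ and leaving the third in $V$). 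This shows $\tau$ acts nontrivially on the summand, forbidding a factorization through $\GL(n,\Z)$. The principal technical obstacle is verifying the non-vanishing of this off-diagonal map: one must unpack the rational-homotopy-type data of $\tau$ via a Magnus expansion and check, with careful sign bookkeeping in the Chevalley-Eilenberg complex, that no accidental cancellation destroys the contribution.
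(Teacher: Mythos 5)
Your overall strategy --- invoke the Hodge filtration on $HH^{\vee_n S^1}$ for the suspension $\vee_n S^1$, locate a small summand whose associated graded is a sum of $\GL(n,\Z)$-irreducibles of total dimension $\frac{n(n^2+5)}{6}$, and exhibit a Torelli-type element acting nontrivially --- is the right one and is the paper's. You also correctly note that $\alg g=\FreeLie(x)$ with $x$ odd is two-dimensional. But the execution has a gap that is in fact self-defeating. The $\Out(F_n)$-action is by (homotopy classes of) chain maps and therefore preserves cohomological degree; it fails to preserve only the Hodge \emph{grading}. So if, as you argue, your three pieces $V,\Lambda^2V,\Lambda^3V$ sit in three \emph{distinct} cohomological degrees, then no mixing between them is possible, your claimed off-diagonal contribution $\Lambda^3V\to V$ vanishes identically for degree reasons, the summand is the direct sum of its Hodge-homogeneous layers, and by Proposition~\ref{p:out_hodge_filtr} the action on it factors through $\GL(n,\Z)$ --- the opposite of what is needed. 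Conversely, if the pieces could mix, the ``bidegree argument'' by which you promote the filtration to a direct summand collapses. The extension can only be nontrivial between Hodge layers living in the \emph{same} cohomological degree, and that is exactly how the paper's summand is built: it is $U^I_3=\coker(d)$ in internal degree $3$, whose two layers are spanned by classes such as $1\otimes\xi\otimes\xi\otimes\xi$ (Hodge weight $3$) and $1\otimes[\xi,\xi]\otimes\xi\otimes 1$ (Hodge weight $2$), both of homological degree $3|\xi|$.

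Relatedly, your identification of the layers is not correct. Writing $\xi=x$ (odd) and $\eta=[x,x]$ (even), the weight-$m$ layer of the associated graded complex is $C\otimes S^m(H^1\otimes\alg g)=C\otimes\bigoplus_{a+b=m}\Lambda^aH^1\otimes S^bH^1$: the $\eta$'s contribute \emph{symmetric} powers. Moreover the pure-$\eta$ classes you single out largely die: $1\otimes\eta_{j_1}\cdots\eta_{j_k}=d\bigl(x\otimes\xi_{j_1}\eta_{j_2}\cdots\eta_{j_k}\bigr)$ is exact, while $x\otimes\eta_{j_1}\cdots\eta_{j_k}$ survives in $\ker d$ but in pairwise distinct degrees, hence without extensions. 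The correct associated graded of the $7$-dimensional (for $n=3$) summand is $V_{(2)}\oplus V_{(1,1,1)}$, i.e.\ an irreducible of dimension $\binom{n+1}{2}$ plus $\Lambda^3$ of dimension $\binom{n}{3}$, not $V\oplus\Lambda^2V\oplus\Lambda^3V$; the total dimensions agree only because $\binom{n+1}{2}=n+\binom{n}{2}$, which is presumably what misled you. Finally, the one genuinely load-bearing verification --- that the off-diagonal term is nonzero in the cokernel --- is deferred in your write-up; the paper settles it by the explicit computation that $E_{12}\colon x_1\mapsto x_1x_2$ and $E_{\bar1\bar2}\colon x_1\mapsto x_2x_1$ have the same image in $\GL(n,\Z)$ yet act on $1\otimes\xi\otimes\xi\otimes\xi\otimes1\otimes\cdots$ with a difference of $1\otimes[\xi,\xi]\otimes1\otimes\xi\otimes\cdots$, which one checks is not de Rham exact.
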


 The previously known representations with such property have the smallest dimension 21 for $n=3$ and 
 \[
 (2^n-1){{n-1}\choose 2}
 \]
 for $n\geq 4$, see~\cite[Section~4]{Kielak}, and also~\cite{BV,GL}.

The higher Hochschild homology on spheres was introduced and studied in the original work of Pirashvili~\cite{pirash00} and on wedges of spheres it was studied in~\cite{Song,SongTur} in connection with the homology and homotopy of spaces of higher dimensional string links. An interesting feature of this homology is that it admits a decomposition into a direct product, and the factors of this {\it Hodge splitting} depend only on the parity of the dimensions of the spheres. In particular, if we know $HH^{\vee_n S^1}(\L_*)$ with the Hodge decomposition, we can reconstruct $HH^{\vee_n S^d}(\L_*)$ for any other odd~$d$. On the other hand, the 
homotopy type of a map $\vee_n S^d\to\vee_n S^d$, $d\geq 2$, is completely determined by the map in  homology. Therefore, $HH^{\vee_n S^d}(\L_*)$, $d\geq 2$, is acted upon by  the monoid $\End(\Z^n)$ of endomorphisms of $\Z^n$. For $d=1$, we get an action of the monoid  $\End(F_n)$ of endomorphisms of a free group $F_n$. In Section~\ref{s:CH_vee_n} we define a certain explicit complex $CH^{\vee_n S^1}(\L_*)$ computing $HH^{\vee_n S^1}(\L_*)$.

\begin{thm}\label{thm:endf_n_HP}
For any $\Gamma$-module $\L_*$, the action of $\End(F_n)$  on $HH^{\vee_n S^1}(\L_*)$ is naturally lifted on the level 
of the complex $CH^{\vee_n S^1}(\L_*)$. Moreover this action respects the Hodge splitting as an increasing filtration, and the action on the associated graded complex $\gr\, CH^{\vee_n S^1}(\L_*)$ factors through
$\End(\Z^n)$.
\end{thm}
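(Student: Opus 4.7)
The plan is to split the theorem into three claims and address them in sequence. First, the chain-level lift of the $\End(F_n)$-action: I would recall from Section~\ref{s:CH_vee_n} that $CH^{\vee_n S^1}(\L_*)$ is obtained as the totalization of $\L_*\circ\X_\bullet$ for an explicit pointed simplicial model $\X_\bullet$ of $\vee_n S^1$, and the key is that this model is chosen so as to admit a \emph{strict} $\End(F_n)$-action by simplicial maps. The natural such choice is the nerve of $F_n$ viewed as a one-object groupoid: the $k$-simplices form the pointed set $(F_n^k,(1,\ldots,1))$, and any $\phi\in\End(F_n)$ acts by componentwise substitution $(g_1,\ldots,g_k)\mapsto(\phi(g_1),\ldots,\phi(g_k))$, which commutes with the face and degeneracy maps since $\phi$ is a group homomorphism. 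Applying the contravariant $\Gamma$-module $\L_*$ and totalizing yields the desired chain endomorphism of $CH^{\vee_n S^1}(\L_*)$, functorial in $\phi$, natural in $\L_*$, and inducing the previously defined action on homology.

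For the filtration compatibility, I would invoke the general description of the Hodge splitting on a suspension established in the earlier sections: for $\Sigma Y$ the splitting is indexed by tensor/polynomial weight in $\tilde H_*(\Sigma Y)$. Applied to $\vee_n S^1\simeq \Sigma(\{p_1,\ldots,p_n\}_+)$, the $k$-th Hodge layer corresponds to the weight-$k$ part of the explicit complex, which matches the word-length grading on simplices $(g_1,\ldots,g_k)\in F_n^k$, the weight being the sum of reduced word lengths $\sum|g_i|$. Since substitution by $\phi$ replaces each occurrence of a generator $x_i$ by the word $\phi(x_i)$ of reduced length $\ge 1$, the total weight cannot decrease, so $\phi^*$ is a morphism of filtered complexes for the ascending Hodge filtration.

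On the associated graded only the leading-weight contributions survive, and these depend only on the abelianization $\bar\phi(x_i)\in\Z^n$: any commutator $[g,h]\in F_n$ has trivial abelianization and its word-length expansion lies in strictly higher weight, so modulo lower filtration $\phi(x_i)$ is indistinguishable from its abelianization. Consequently the action on $\gr\,CH^{\vee_n S^1}(\L_*)$ factors through the abelianization homomorphism $\End(F_n)\to\End(\Z^n)$, with the $k$-th graded piece receiving the corresponding symmetric or exterior tensor power of $\bar\phi$ depending on the parity/degree conventions built into the Hodge layers.

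The hard part, and the one I would spend the most care on, is the precise combinatorial identification of the Hodge filtration on $CH^{\vee_n S^1}(\L_*)$ with the word-length filtration on the chosen simplicial model, and the verification that the substitution action is compatible with it in the presence of free-group cancellations $x_jx_j^{-1}$ (which must be interpreted as degenerate simplices, hence zero in the normalized totalization, without creating spurious lower-filtration corrections). Once this identification is in place, the three claims follow formally from the suspension-case results of the preceding sections.
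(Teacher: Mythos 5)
Your overall architecture (a strict simplicial action, followed by a filtration whose associated graded only sees the abelianization) is reasonable, and your third paragraph is in spirit the same as the paper's argument for the last claim. However, there are two genuine gaps. First, the nerve of $F_n$ does not produce the complex $CH^{\vee_n S^1}(\L_*)$ that the theorem is actually about: its sets of $k$-simplices $F_n^k$ are infinite, so a general right $\Gamma$-module $\L_*$ can only be applied after a Kan extension, and the resulting totalization is a far larger complex than $CH^{\vee_n S^1}(\L_*)=\prod_{(k_1,\dots,k_n)}N\L_*(\sum_i k_i)[\sum_i k_i]$, which is built from the \emph{finite} multisimplicial model whose $(k_1,\dots,k_n)$-simplices form the pointed set $(k_1+\dots+k_n)_*$. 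Transporting your strict action along a chain homotopy equivalence to this small complex a priori yields only an action up to (coherent) homotopy; producing a strict monoid action on $CH^{\vee_n S^1}(\L_*)$ itself is precisely the nontrivial content of the first claim. The paper obtains it by writing explicit formulas directly on the small complex --- sums over $\prod_i r_i^{k_i}$ permutations of inputs, generalizing~\eqref{eq_Psi2} via coshuffles, concatenations and the antipode --- and checking compatibility with composition, including the free-group cancellations $x_jx_j^{-1}$ that you flag but do not handle.

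Second, the Hodge splitting is not the word-length filtration. In the paper it is the simultaneous eigenspace decomposition for the sub-monoid $(\Z,\ast)^{\times n}\subset\End(F_n)$ of degree maps, realized by the Euler idempotents $e_{m_1}(\ell_1)\otimes\dots\otimes e_{m_n}(\ell_n)\in\Q[\bbS_{\ell_1}]\otimes\dots\otimes\Q[\bbS_{\ell_n}]$ acting on $N\L_*(\ell_1+\dots+\ell_n)$; preservation of the resulting increasing filtration then follows because concatenation respects the Poincar\'e--Birkhoff--Witt filtration coming from $\Com\circ\Lie\cong\Ass$ while coshuffles preserve the grading, and the factorization through $\End(\Z^n)$ is read off from the description of the associated graded in terms of $H^1(\vee_n S^1)$ (equations~\eqref{eq:grCH_m1}--\eqref{eq:grCH_m2}). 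By contrast, a length function on the \emph{set} of simplices does not induce a filtration on $\L_*$ evaluated on that set for a general $\Gamma$-module, so your weight filtration is not even defined at the level of generality of the theorem; moreover your step ``$\phi(x_i)$ has reduced length $\ge 1$'' fails for the non-injective endomorphisms contained in $\End(F_n)$ (e.g.\ $\phi(x_i)=1$). Since you explicitly defer the identification of your word-length filtration with the Hodge filtration to ``the hard part'' without carrying it out, the second and third claims of the theorem remain unproved in your proposal.
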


 We will see in Section~\ref{s4} that as an $\End(\Z^n)$ module, $\gr\, HH^{\vee_n S^1}(\L_*)$
is (up to regrading) naturally isomorphic to $HH^{\vee_n S^d}(\L_*)$ for any odd $d\geq 3$.

The fact that the $\End(F_n)$ action above respects the Hodge filtration is actually a manifestation of a more general phenomenon. We show in Section~\ref{s4} that the Hodge filtration in $HH^{X_*}(\L_*)$,
that can also be called Poincar\'e-Birkhoff-Witt filtration,
  is defined functorially in $X_*$ and $\L_*$.  This filtration is an interesting phenomenon in itself that does not seem to appear earlier in any kind of functor 
calculus. 
  In particular, the Hodge filtration should not be confused with the  cardinality or rank (co)filtration considered, for example, in~\cite{AyFr,IntJMc}, 
and inspired from the manifold functor calculus~\cite{WeissEmb}, see Subsection~\ref{ss42+}. 
In that subsection we also explain in which sense the Hodge filtration in the Hochschild-Pirashvili homology
 on suspensions is exhaustive: it is dense in the topology induced by the cardinality cofiltration. 
   
  Theorem~\ref{thm:endf_n_HP} can be \lq\lq{}categorified\rq\rq{} to all suspensions and maps between them.
More specifically,
let $\TOP_*$ 
denote the category of pointed topological spaces with morphisms homotopy classes of pointed maps. 
Let $\TOP_*|_\Sigma$ denote its full subcategory whose objects are suspensions.  By $\Sigma(\TOP_*)$ we denote the image category of the suspension functor  $\Sigma\colon\TOP_*\to\TOP_*$.
Notice  that  any suspension is rationally equivalent to a wedge of 
spheres~\cite[Theorem~24.5]{FHT1}. Thus, for the sake of concreteness and slightly simplifying the matters, the reader can think about the category $\TOP_*|_\Sigma$ as about the full subcategory in $\TOP_*$  of wedges of spheres of possibly different dimensions $\geq 1$. 
The following theorems generalize Theorem~\ref{thm:endf_n_HP} on this category $\TOP_*|_\Sigma$.

\begin{thm}\label{t:HP_suspensions1}
For any right $\Gamma$-module $\L_*$, the cofunctor $HH^{(-)}(\L_*)\colon {\TOP_*}^{op}
\to gVect$ admits an increasing  filtration generalizing the Hodge filtration on $HH^{\vee_n S^1}(\L_*)$,
such that the completed associated graded  functor $\gr\, HH^{(-)}(\L_*)$ restricted on $\TOP_*|_\Sigma$ factors 
through the reduced homology functor $\tilde H_*\colon \TOP_*\to gVect$.  Over $\Sigma(\TOP_*)$,
this filtration splits in the sense that one has a natural isomorphism 
$HH^{(-)}(\L_*)|_{\Sigma(\TOP_*)} \to \gr\, HH^{(-)}(\L_*)|_{\Sigma(\TOP_*)}$.
\end{thm}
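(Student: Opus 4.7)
My plan is to build the Hodge filtration intrinsically, at the level of $\Gamma$-modules, so that functoriality in arbitrary pointed maps becomes automatic, and then to use the special structure of suspensions and of suspension morphisms to analyze the associated graded and construct the splitting.

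First I would construct an increasing filtration on the $\Gamma$-module $C_*(X_*^\bullet)$ using only its $\Gamma$-structure, for example via Pirashvili's canonical analysis of $\Gamma$-modules into cross-effects of fixed polynomial degree. This yields, by passing to $\Hom_\Gamma(-,\L_*)$, a decreasing filtration on the complex computing $HH^{X_*}(\L_*)$; reindexing produces the increasing Hodge filtration on its homology. Because the filtration is defined intrinsically to the $\Gamma$-structure, and not in terms of the space $X_*$, it is manifestly functorial for all pointed maps $X_* \to Y_*$, and a direct check on $X_* = \vee_n S^1$ should recover the PBW/Hodge filtration on $CH^{\vee_n S^1}(\L_*)$ introduced in Section~\ref{s:CH_vee_n}, justifying the ``generalization'' clause.

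Next I would restrict to $\TOP_*|_\Sigma$ and identify the associated graded. The key inputs are: (i) the iterated smash decomposition $(\Sigma Y)^{\wedge n} \simeq \Sigma^n(Y^{\wedge n})$, which makes pointed maps out of $(\Sigma Y)^{\wedge n}$ especially simple, and (ii) the null-homotopy of the reduced diagonal $\Sigma Y \to (\Sigma Y)^{\wedge 2}$. Together these imply that the cross-effect analysis of $C_*(\Sigma Y^\bullet)$ depends on $\Sigma Y$ only through $\tilde C_*(\Sigma Y)$, and at the $n$-th graded piece through $\tilde C_*(\Sigma Y)^{\otimes n}$ with its $\bbS_n$-action. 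By formality of suspensions this is further quasi-isomorphic to the analogous functor of $\tilde H_*(\Sigma Y)$, so $\gr\, HH^{(-)}(\L_*)$ factors through $\tilde H_*$ on $\TOP_*|_\Sigma$; the word ``completed'' reflects the need to take $\prod_n \gr_n$ to handle infinite filtrations.

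Third, over $\Sigma(\TOP_*)$ I would construct the natural splitting. The crucial observation is that a suspension morphism $\Sigma f$ is strictly, not merely up to homotopy, compatible with the co-$H$-space pinch map $\Sigma Y \to \Sigma Y \vee \Sigma Y$; consequently the induced map on chains respects the tensor-power splitting of $\tilde C_*(\Sigma Y)^{\otimes n}$ level-wise, and the graded-piece decomposition of $C_*(\Sigma Y^\bullet)$ assembles into a splitting of the Hodge filtration on $HH^{(-)}(\L_*)$ natural on all of $\Sigma(\TOP_*)$. The step I expect to be the main obstacle is this last one, specifically the proof that the splitting genuinely fails to extend naturally to $\TOP_*|_\Sigma$: this amounts to exhibiting, for a non-suspension rational homotopy class between suspensions such as a nontrivial Whitehead product, an explicit Hodge-weight-lowering component in the induced map on $HH^{(-)}(\L_*)$. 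Modulo rational formality this is a computation on wedges of spheres, but it requires careful bookkeeping of the cross-effect actions under the relevant rational homotopy data.
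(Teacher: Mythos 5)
Your proposal follows essentially the same route as the paper: the Hodge filtration is defined functorially for all pointed spaces via the polynomial (arity) truncation of the cross-effect $\Omega$-module $\tilde C_*(X_*^{\wedge\bullet})$, and for suspensions the smash decomposition, the homological triviality of the diagonal maps, and formality identify the associated graded as a functor of $\tilde H_*$ and split the filtration naturally in $Y_*$. The only caveat is that your final paragraph sets out to prove something the theorem does not assert (that the splitting fails to be natural on all of $\TOP_*|_\Sigma$); that belongs to the paper's later discussion of non-suspension maps and is not needed here.
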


In Section~\ref{ss43} we construct a cofunctor $\CH^{(-)}(\L_*)\colon (\TOP_*|_\Sigma)^{op}
\to dgVect$.

\begin{thm}\label{t:HP_suspensions2}
The cofunctor  $\CH^{(-)}(\L_*)\colon (\TOP_*|_\Sigma)^{op}
\to dgVect$ has the following properties
\begin{itemize}
\item $H_*\circ \CH^{(-)}(\L_*) = HH^{(-)}(\L_*)$. 
\item The complex $\CH^{\vee_n S^1}(\L_*)$ is identical to  $CH^{\vee_n S^1}(\L_*)$.
\item This functor admits an increasing  (Hodge) filtration compatible with the Hodge filtration in homology.
\item The completed associated graded functor $\gr\,\CH^{(-)}(\L_*)$ factors through the reduced homology finctor $\tilde H_*\colon \TOP_*|_\Sigma\to gVect$.
\item Over $\Sigma(\TOP_*)$, the Hodge filtration in $\CH^{(-)}(\L_*) $ splits in the sense that one has
a natural isomorphism $\CH^{(-)}(\L_*)|_{\Sigma(\TOP_*)} \to \gr\, \CH^{(-)}(\L_*)|_{\Sigma(\TOP_*)}$.
\end{itemize}
\end{thm}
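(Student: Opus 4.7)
The plan is to construct $\CH^{\Sigma Y}(\L_*)$ directly from the reduced rational homology $V := \tilde H_*(\Sigma Y)$, exploiting the fact that a suspension of finite type is rationally a wedge of spheres by \cite[Theorem~24.5]{FHT1}, so that its Quillen Lie model is the free Lie algebra on $V[-1]$ with vanishing differential and cobracket. For a right $\Gamma$-module $\L_*$, I would define
\[
\CH^{\Sigma Y}(\L_*) := \prod_{k\ge 1}\bigl(\L_*(k_*)\otimes V^{\otimes k}\bigr)_{\bbS_k},
\]
with differential inherited from that of $\L_*$ together with the internal differentials induced by the $\Gamma$-structure (pointed maps contracting tensor factors into~$*$). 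The "external'' Chevalley-type differential that one would ordinarily expect from a Lie cobracket on $V$ is absent here, since the cobracket vanishes. Functoriality in $\Sigma Y$ on $(\TOP_*|_\Sigma)^{op}$ is obtained by representing any homotopy class of pointed maps between suspensions by a Lie coalgebra morphism of their Quillen models and checking well-definedness up to chain homotopy.

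To verify the first bullet, I would compare $\CH^{\Sigma Y}(\L_*)$ with Pirashvili's totalization complex computing $HH^{\Sigma Y}(\L_*)$ applied to the canonical double-cone simplicial model of $\Sigma Y$; the totalization decomposes naturally by the number of non-basepoint vertices, matching the product above term-by-term. When $\L_* = M\otimes C^{\otimes\bullet}$ for a cocommutative coalgebra $C$ and comodule $M$, this recovers the Chevalley--Eilenberg complex computed by Pirashvili in \cite{pirash00}; the general case then reduces to this by a standard coend argument since every $\Gamma$-module is a colimit of representable ones. The second bullet is verified by direct inspection: for $\Sigma Y = \vee_n S^1$ we have $V = \Q^n$ concentrated in degree $1$, and the formula collapses exactly to the combinatorial complex $CH^{\vee_n S^1}(\L_*)$ of Section~\ref{s:CH_vee_n}.

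The Hodge filtration is defined as the arity filtration
\[
F_p\CH^{\Sigma Y}(\L_*) := \prod_{k\le p}\bigl(\L_*(k_*)\otimes V^{\otimes k}\bigr)_{\bbS_k},
\]
which is preserved by the differential precisely because the cobracket on $V$ is trivial. The compatibility with the Hodge filtration on $HH^{\Sigma Y}(\L_*)$ will follow from the comparison above, once the general Hodge filtration is constructed in Section~\ref{s4} as a categorical PBW decomposition of the derived functor $HH^{X_*}(-)$. The completed associated graded $\gr\,\CH^{\Sigma Y}(\L_*)$ is manifestly built from $V$ alone, giving the factorization through $\tilde H_*$. Finally, when $f = \Sigma g$ is a suspension, the induced Lie coalgebra map is precisely the free extension of $g_*\colon \tilde H_*(Y)\to \tilde H_*(Z)$ without any higher corrections; hence $f^*$ sends a tensor of arity $k$ to one of arity $k$, splitting the filtration as claimed.

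The main obstacle will be the last two points: for a general homotopy class $f\colon \Sigma Y\to \Sigma Z$ that is \emph{not} a suspension, the induced map on Lie coalgebra models contains higher-bracket corrections coming from iterated Whitehead products in the rational homotopy of $\Sigma Z$, and these strictly lower the arity. Showing that such $L_\infty$-morphisms induce well-defined maps on $\CH$ that preserve the Hodge filtration, and that these assemble into an honest cofunctor on the homotopy category rather than merely a homotopy-coherent one, requires a careful rectification at the Lie coalgebra level; this is the technical core of the construction and precisely what distinguishes the "only as a filtration'' behaviour on $\TOP_*|_\Sigma$ from the strict splitting over $\Sigma(\TOP_*)$.
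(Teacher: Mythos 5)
Your proposed underlying complex is not the right one, and the failure is already visible for $X_*=S^1$. The paper defines $\CH^{\Sigma Y_*}(\L_*)$ as $\Rmod_{\Com_+}\bigl(\tilde H_*((\Sigma Y_*)^{\wedge\bullet}/\Delta^\bullet\Sigma Y_*)\circ\Com_+,\,cr\,\L_*\bigr)$, whose $m$-th Hodge layer is
\[
\Hom_{\bbS}\Bigl(\tilde H_*(\Sigma Y_*)^{\otimes m}\circ\coLie\{1\},\ cr\,\L_*\Bigr)
\;=\;
\Hom_{\bbS_m}\Bigl(\tilde H_*(Y_*)^{\otimes m},\ \prod_{\ell\geq m}\bigoplus_{\ell_1+\cdots+\ell_m=\ell}\bigl(\Lie(\ell_1)\otimes\cdots\otimes\Lie(\ell_m)\otimes_{\bbS_{\ell_1}\times\cdots\times\bbS_{\ell_m}}(sign\otimes cr\,\L_*(\ell))\bigr)[\ell]\Bigr),
\]
i.e.\ it involves $\L_*$ in \emph{all} arities $\ell\geq m$, glued by Lie-operad factors and by a Harrison-type differential that inserts a bracket while multiplying two inputs of $cr\,\L_*$. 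Your layer $F_p/F_{p-1}=(\L_*(p_*)\otimes V^{\otimes p})_{\bbS_p}$ sees only arity $p$ and imposes a symmetrization that is not present: for $\vee_n S^1$ the combinatorial complex is $\prod_{(k_1,\ldots,k_n)}N\L_*(\Sigma k_i)[\Sigma k_i]$ with no coinvariants, and its Hodge pieces are cut out by the Euler idempotents $e_{m_1}(\ell_1)\otimes\cdots\otimes e_{m_n}(\ell_n)$ acting on chains of every length $\ell\geq m$, not by restricting to length $m$. Concretely, for $\L_*=M\otimes C^{\otimes\bullet}$ and $X_*=S^1$ your formula yields $M\otimes S(C[1])$ with no cobar differential, whereas the correct complex is $M\otimes\Omega C\simeq M\otimes\mU\alg g$ with $m$-th Hodge layer $M\otimes S^m(\alg g)$ for $\alg g$ the Harrison complex of $C$; these do not have the same homology.

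The root of the error is the sentence claiming the "external Chevalley-type differential is absent since the cobracket vanishes''. This conflates the Quillen model of the space with the Koszul resolution of the right $\Com_+$-module $\tilde C_*((\Sigma Y_*)^{\wedge\bullet})\simeq \tilde H_*(\Sigma Y_*)^{\otimes\bullet}$: even though the $\Omega$-action on $\tilde H_*(\Sigma Y_*)^{\otimes\bullet}$ is trivial, its cofibrant replacement is $\tilde H_*(\Sigma Y_*)^{\otimes\bullet}\circ\coLie\{1\}\circ\Com_+$ and the Koszul differential (splitting off a cobracket and acting by the product on $\Com_+$) survives after applying $\Rmod_{\Com_+}(-,cr\,\L_*)$; equivalently, the arity-$n$ input of the complex is $\tilde H_*((\Sigma Y_*)^{\wedge n}/\Delta^n\Sigma Y_*)\cong(\tilde H_*(\Sigma Y_*)^{\otimes\bullet}\circ\coLie\{1\})(n)$, not $V^{\otimes n}$, and the Hodge filtration is the filtration by number of cogenerators of this cofree $\coLie\{1\}$-comodule, not your arity truncation of $\L_*$ (the latter is the cardinality cofiltration of Subsection~\ref{ss42+}, which the paper is at pains to distinguish from the Hodge filtration). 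Finally, the rectification problem you defer to the end is exactly what the paper's coformality statement (Proposition~\ref{p:coformal_susp}) settles: the Koszul dual comodule has cofree homology, hence every map of such comodules is formal, which gives strict functoriality on $\TOP_*|_\Sigma$ and, combined with Corollary~\ref{cor:conf_susp} and Proposition~\ref{pr:left_act}, the splitting over $\Sigma(\TOP_*)$. Without passing to the Koszul dual side this step cannot be carried out as you describe.
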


More concretely when we say that the functors $\gr\, HH^{(-)}(\L_*)\colon \TOP_*|_\Sigma \to gVect$ and
$\gr\, \CH^{(-)}(\L_*)\colon \TOP_*|_\Sigma\to dgVect$ factor through $\tilde H_*\colon \TOP_*|_\Sigma\to gVect$ we mean that for any pointed space $Y_*$,  both $\gr\, HH^{\Sigma Y_*}(\L_*)$ and $\gr\, \CH^{\Sigma Y_*}(\L_*)$ can be described as a power series expression in $\tilde H_*\Sigma Y_*$:
\begin{gather}
\gr\, HH^{\Sigma Y_*}(\L_*)=
\prod_n\Hom_{\bbS_n}\left( (\tilde H_*\Sigma Y_*)^{\otimes n}, {\mathcal H}_{\L_*}(n) \right),
\label{eq:power_ser1}  \\
\gr\, \CH^{\Sigma Y_*}(\L_*)= 
\prod_n\Hom_{\bbS_n}\left( (\tilde H_*\Sigma Y_*)^{\otimes n}, {\mathcal C}_{\L_*}(n) \right),
\label{eq:power_ser2}
\end{gather}
where $\mathcal{C}_{\L_*}$ is some symmetric sequence in chain complexes depending on $\L_*$, and 
$\mathcal{H}_{\L_*}$ is its homology symmetric sequence. The fact that the Hodge filtration splits over 
$\Sigma(\TOP_*)$ means that we have isomorphisms
\begin{gather}
\CH^{\Sigma Y_*}(\L_*)\stackrel{\simeq}{\longrightarrow} \gr\, \CH^{\Sigma Y_*}(\L_*),\label{eq:spl1}\\
HH^{\Sigma Y_*}(\L_*)\stackrel{\simeq}{\longrightarrow} \gr\, HH^{\Sigma Y_*}(\L_*)\label{eq:spl2}
\end{gather}
natural in $\Sigma Y_*\in \Sigma(\TOP_*)$. 
The $n$-th term of the Hodge splitting is exactly the $n$-th factor in~\eqref{eq:power_ser1} and~\eqref{eq:power_ser2}. (This splitting also means that the higher Hochschild complexes for suspensions split as a product of complexes.) In case a pointed map $f\colon \Sigma Y_*\to \Sigma Z_*$ is not a suspension, the Hodge splitting in the higher Hochschild complexes/homology (via isomorphisms~\eqref{eq:spl1}-\eqref{eq:spl2})  behaves like a filtration: higher  terms of the splitting can be send  non-trivially to lower  ones.   
 In Section~\ref{s:rht_map} we compute how from the given rational homotopy type of a map of suspensions
one gets the induced map between the terms of the splitting. We also demonstrate  this  on some examples, such as
   the Hopf map $S^3\to S^2$ and  a
non-trivial pointed map $S^2\to S^2\vee S^1$. 

Some of the techniques that we develop for suspensions work equally well for general spaces.
In Section~\ref{s:non_susp} we briefly consider this general case of non-suspensions.
Theorems~\ref{th:non_susp}-\ref{th:non_susp2} and Proposition~\ref{p:non_susp} describe these more general higher Hochschild complexes
in the case $\L_* = M\otimes C^{\otimes \bullet}$ as some kind of homotopy base change type of
Chevalley complexes. In this section we also show that for a connected pointed space $X_*$ (of finite type) the Hodge filtration splits for any coefficient $\Gamma$ module $\L_*$ if and only if $X_*$ is rationally homology equivalent to a suspension.

\subsection*{Acknowledgements}
We thank G. Arone, B. Fresse, G. Ginot, and D. Kielak for helpful discussions. V.T. thanks the MPIM and the IHES, where he spent his sabbatical and where he started to work on this project.  T.W. has been partially supported by the Swiss National Science foundation, grant 200021\_150012, and the SwissMAP NCCR funded by the Swiss National Science foundation.

\section{Special case of $\End(F_n)$ action}\label{s2}
In this section we look at the special case $\L_*=M\otimes C^{\otimes\bullet}$, where $C$ is a cocommutative coalgebra and $M$ a $C$-comodule as before. If not otherwise stated we will always assume that $C$ is simply connected. We will define a
complex $CH^{\vee_n S^1}(M\otimes C^{\otimes \bullet})$ and an $\End(F_n)$ action on it. 
In Section~\ref{s:CH_vee_n} we explain why this complex computes $HH^{\vee_n S^1}(M\otimes C^{\otimes \bullet})=HH^{\vee_n S^1}(C,M)$ and why the $\End(F_n)$ action that we define corresponds to the topological action. Define $CH^{\vee_n S^1}(M\otimes C^{\otimes \bullet})$ as 
$M\otimes (\Omega C)^{\otimes n}$, where $\Omega C$ is the cobar construction of $C$ --- as a space it is a free associative algebra generated by $C[1]$. The differential
\begin{equation}\label{eq:differential1}
d=d_M+d_C+\delta,
\end{equation}
where $d_M$ and $d_C$ are induced by the differentials on $M$ and $\Omega C$ respectively and
\[
\delta(m\otimes b_1\otimes\ldots\otimes b_n)=
\sum\sum_j\pm m'\otimes\ldots\otimes [m'',b_j]\otimes\ldots\otimes b_n,
\]
where we used Sweedler\rq{}s notation; $\pm$ is the Koszul sign due to permutation of $m''$ with $b_i$\rq{}s. 

We can assume without loss of generality that $C=\Chev(g)$ is the Chevalley complex of a dg Lie algebra $\alg g$ concentrated in strictly positive degrees. (If not, take for $\alg g$ the Harrison complex of $C$.) As a cocommutative coalgebra it is freely cogenerated by ${\alg g} [-1]$. 
In the latter case the aforementioned complex is quasi-isomorphic to 
$ M\otimes (\mU \alg g)^{\otimes n}$ , where $\mU \alg g$ is the universal envelopping algebra of $\alg g$, with differential
\begin{equation}\label{eq:differential2}
d=d_M+d_{\alg g}+\delta,
\end{equation}
defined similarly:
 $d_M$ and $d_{\alg g}$ are induced from the differentials on $M$ and $\alg g$ and 
\[
\delta (m\otimes b_1,\dots ,b_n) = \sum \sum_j \pm m'\otimes b_1\otimes\ldots \otimes [\pi(m''), b_j]\otimes \dots \otimes b_n,
\]
where  $\pi:\Chev(\alg g)\to \alg g$ is the projection to the cogenerators.

The action of $\End(F_n)$ on $ M\otimes (\mU \alg g)^{\otimes n}$ and $M\otimes (\Omega C)^{\otimes n}$
is described by the same formulas. Both $\mU \alg g$ and $\Omega C$ are cocommutative Hopf algebras. 
In Sweedler\rq{}s notation the iterated coproduct is written as
\[
\Delta^k b=\sum b^{(1)}\otimes b^{(2)}\otimes\ldots\otimes b^{(k)}.
\]
Since the coproduct is cocommutative, we will be writing instead
\[
\Delta^k b=\sum b^{(\bullet)}\otimes b^{(\bullet)}\otimes\ldots\otimes b^{(\bullet)}.
\]
Let $\Psi\in\End(F_n)$ send
\begin{equation}\label{eq_Psi1}
x_i\mapsto x_{\alpha_{i1}}^{\varepsilon_{i1}}\cdot x_{\alpha_{i2}}^{\varepsilon_{i2}}\cdot 
\ldots\cdot x_{\alpha_{ik_i}}^{\varepsilon_{ik_i}},\quad i=1\ldots n,
\end{equation}
where $\varepsilon_{ij}=\pm 1$, $\alpha_{ij}\in\{1\ldots n\}$. We let $\beta_{ij}=\frac{1-\varepsilon_{ij}}2\in\{0,1\}$ and  define
\begin{equation}\label{eq_Psi2}
\Psi^*(m\otimes b_1\otimes\ldots\otimes b_n):=
m\otimes\sum\pm\bigotimes_{i=1}^n\prod_{j=1}^{k_i}s^{\beta_{ij}}(b_{\alpha_{ij}}^{(\bullet)}),
\end{equation}
where the sign $\pm$ is the Koszul sign arising from the factors permutations, $s$ is the antipod. 

\begin{ex}\label{ex_Psi}
(a) $n=1$; $x_1\mapsto (x_1)^2$. 
\[
\Psi^*(m\otimes b)=m\otimes \sum b'\cdot b''.
\]
(b) $n=1$; $x_1\mapsto x_1^{-1}$.
\[
\Psi^*(m\otimes b)=m\otimes s(b).
\]
(c) $n=2$; $x_1\mapsto x_1\cdot x_2$, $ x_2\mapsto x_2$.
\[
\Psi^*(m\otimes b_1\otimes b_2)= m\otimes \sum b_1\cdot b_2'\otimes b_2''.
\]
\end{ex}

\begin{prop}\label{p:out_action1}
The formula \eqref{eq_Psi2} defines the right action of $\End(F_n)$ on the complexes 
$ M\otimes (\mU \alg g)^{\otimes n}$ and $M\otimes (\Omega C)^{\otimes n}$.
\end{prop}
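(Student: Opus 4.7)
The proposition requires three verifications: (a) the right-hand side of \eqref{eq_Psi2} depends only on $\Psi\in\End(F_n)$ and not on the specific word representation \eqref{eq_Psi1}; (b) the assignment $\Psi\mapsto\Psi^\ast$ is compatible with composition, i.e.\ $\mathrm{id}^\ast=\mathrm{id}$ and $(\Psi_1\circ\Psi_2)^\ast=\Psi_2^\ast\circ\Psi_1^\ast$; and (c) $\Psi^\ast$ commutes with the total differentials \eqref{eq:differential1} and \eqref{eq:differential2}. Throughout I would use that $H=\Omega C$ and $H=\mU\alg g$ are both cocommutative dg Hopf algebras with coproduct $\Delta$, antipode $s$, counit $\epsilon$, and unit $1$, satisfying $\mu\circ(\mathrm{id}\otimes s)\circ\Delta=\eta\circ\epsilon$, $(\epsilon\otimes\mathrm{id})\circ\Delta=\mathrm{id}$, with $\Delta$ an algebra map and $s$ an anti-algebra and anti-coalgebra map.

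For (a), elements of $F_n$ are words in $x_1^{\pm 1},\ldots,x_n^{\pm 1}$ modulo the elementary moves inserting or deleting $x_j x_j^{-1}$ and $x_j^{-1} x_j$. Inserting such a pair into the word $\Psi(x_i)$ introduces two consecutive factors $b_j^{(\bullet)}\cdot s(b_j^{(\bullet)})$ at the corresponding position inside the product in the $i$-th tensor slot of \eqref{eq_Psi2}. By coassociativity, these two coproduct components may be viewed as arising from a further $\Delta$-split of a single coproduct component of $b_j$; the antipode axiom collapses that further split to $\epsilon(b_j^{(\bullet)})\cdot 1$, and the counit axiom absorbs the $\epsilon$ factor back into the remaining iterated coproduct, leaving only a unit $1$ inserted into the product in the $i$-th slot. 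Since $1$ is multiplicatively inert, the formula is unchanged, and the opposite insertion $x_j^{-1}x_j$ is handled identically.

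For (b), the identity case is immediate. For composition, if $\Psi_1(x_i)=\prod_j x_{\alpha_{ij}}^{\varepsilon_{ij}}$ and $\Psi_2(x_k)=\prod_l x_{\gamma_{kl}}^{\delta_{kl}}$, then $(\Psi_2\circ\Psi_1)(x_i)$ is obtained by substituting $\Psi_2(x_{\alpha_{ij}})^{\varepsilon_{ij}}$ for each letter $x_{\alpha_{ij}}^{\varepsilon_{ij}}$. On the Hopf-algebraic side, applying $\Psi_1^\ast$ first splits each $b_\ell$ via an iterated coproduct and places the components according to $\Psi_1$; subsequent application of $\Psi_2^\ast$ further splits each such placed component. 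That the result matches the direct formula for $(\Psi_2\circ\Psi_1)^\ast$ follows from coassociativity, from the fact that $\Delta$ is an algebra homomorphism (so iterated coproducts of products equal products of iterated coproducts in all slots involved), from $s\circ s=\mathrm{id}$ (holding because of cocommutativity), and from $s$ being an anti-algebra map (to handle the $\varepsilon=-1$ letters consistently across both layers of substitution).

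For (c), commutation of $\Psi^\ast$ with $d_M$ and $d_{\alg g}$ (resp.\ $d_C$) is automatic because $\Delta$ and $s$ are chain maps. The nontrivial check is commutation with $\delta$. This rests on the cocommutative Hopf algebra identities that for $a=\pi(m'')\in\alg g$ primitive one has $\Delta\co{a}{c}=\sum\co{a}{c^{(1)}}\otimes c^{(2)}+\sum c^{(1)}\otimes\co{a}{c^{(2)}}$ (i.e.\ $\co{a}{-}$ is a coderivation), $s(\co{a}{c})=\co{a}{s(c)}$, and the Leibniz rule $\co{a}{xy}=\co{a}{x}y+x\co{a}{y}$ (all with appropriate Koszul signs). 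With these, both $\Psi^\ast\delta$ and $\delta\Psi^\ast$ expand to the same sum indexed by pairs (generator $j$, chosen coproduct component of $b_j$) in which a commutator $\co{\pi(m'')}{-}$ is inserted on that component. I expect the main obstacle to be verification (a), whose delicate two-step collapse (antipode followed by counit) is precisely what encodes the defining relations of $F_n$ inside the Hopf algebra structure, and which is the reason that the specific cocommutative Hopf algebras $\mU\alg g$ and $\Omega C$ are chosen here rather than arbitrary cocommutative dg bialgebras.
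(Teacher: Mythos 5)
Your proposal is correct and follows essentially the same route as the paper: the chain-map property is checked term by term using that the product, coproduct and antipode are chain maps and that they respect the $\alg g$-action, and compositionality/well-definedness both reduce to the antipode--counit identity $\sum b^{(\bullet)}\cdot s(b^{(\bullet)})\otimes (b^{(\bullet)})^{\otimes k}=1\otimes \Delta^k b$, which is exactly how the paper handles cancellation of $x_jx_j^{-1}$ pairs. The only organizational difference is that the paper folds your steps (a) and (b) into a single check by defining composition as substitution without simplification and then verifying that reduction does not change the value.
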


\begin{proof}
To see that $\Psi^*$ is a morphism of complexes we notice that it commutes with each term of the 
differentials~\eqref{eq:differential1} and \eqref{eq:differential2}: it commutes with $d_M$ by obvious reasons; 
it commutes with $d_{\alg g}$ since both product and coproduct of $\mU \alg g$ are morphisms of complexes; it commutes with $\delta$ since both product and coproduct respect the $\alg g$ action. 

For the composition, it is quite easy to see that $(\Psi_1\circ \Psi_2)^*=\Psi_2^*\circ\Psi_1^*$, where the composition $\Psi_1\circ\Psi_2$ is understood as substitution without simplification. We only need to check 
that in case  $(\Psi_1\circ\Psi_2)(x_i)$ has two consecutive factors $x_j$ and $x_j^{-1}$ for some $i$,
then $(\Psi_1\circ \Psi_2)^*$ is the same as if these factors are cancelled out. But in such case,
$(\Psi_1\circ \Psi_2)^*(m\otimes b_1\otimes\ldots\otimes b_n)$ also has two consecutive factors
$b_j^{(\bullet)}$ and $s(b_j^{(\bullet)})$, which can also be eliminated:
\[
\sum b_j^{(\bullet)}\cdot s(b_j^{(\bullet)})\otimes (b_j^{(\bullet)})^{\otimes k}=
1\otimes \sum (b_j^{(\bullet)})^{\otimes k}=1\otimes\Delta^k b_j=
\sum  s(b_j^{(\bullet)})\cdot b_j^{(\bullet)} \otimes (b_j^{(\bullet)})^{\otimes k}.
\]

\end{proof}

\subsection{Hodge decomposition/filtration}\label{ss:hodge_filtr}

The Poincar\'e-Birkhoff-Witt isomorphism $S\alg g\to\mU \alg g$ respects both the coalgebra and $\alg g$ action structures. As a corollary, the induced map
\[
 M\otimes (S \alg g)^{\otimes n}\to  M\otimes (\mU \alg g)^{\otimes n}
 \]
 is an isomorphism of complexes. The image of the subcomplex $M\otimes S^{m_1}\alg g\otimes\ldots
 \otimes S^{m_n}\alg g$ in $M\otimes (\mU \alg g)^{\otimes n}$ is called $(m_1,\ldots,m_n)$ Hodge 
 multidegree component, whose {\it total Hodge degree} is $m=m_1+\ldots +m_n$. 
 One has
 \[
 \bigoplus_{m_1+\ldots+m_n=m}M\otimes S^{m_1}\alg g\otimes\ldots
 \otimes S^{m_n}\alg g=M\otimes S^m(H^1\otimes\alg g),
 \]
 where $H^1:=H^1(\vee_n S^1,\Z)=\Z^n$ viewed as a  space concentrated in degree zero.
 Below $H_1:=H_1(\vee_n S^1,\Z)$.
 
 \begin{prop}\label{p:out_hodge_filtr}
 The action of $\End(F_n)$ on $M\otimes (\mU \alg g)^{\otimes n}$  preserves the total Hodge degree
 as a filtration. The induced action on the associated graded complex $\gr\, M\otimes (\mU \alg g)^{\otimes n}$ factors through $\End(H_1)=\End(\Z^n)$ as one has 
 \[
 \gr\, M\otimes (\mU \alg g)^{\otimes n}= M\otimes S(H^1\otimes \alg g).
 \]
 \end{prop}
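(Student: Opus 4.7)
The plan is to track the Hodge filtration through formula \eqref{eq_Psi2} using the Hopf-algebraic properties of $\mU\alg g$ encoded by PBW. Under the PBW isomorphism $S\alg g \xrightarrow{\sim} \mU\alg g$, the Hodge filtration $F_m = \bigoplus_{k \le m} S^k\alg g$ interacts with the Hopf structure as follows: the coproduct sends $F_m$ into $\sum_{p+q=m} F_p \otimes F_q$ and, on $\gr F = S\alg g$, becomes the standard bialgebra coproduct with primitives $\alg g$; the antipode preserves each $F_m$ and acts on $S^m\alg g = \gr_m F$ by $(-1)^m$; the product sends $F_p \otimes F_q$ into $F_{p+q}$ and becomes the commutative multiplication of $S\alg g$ on the associated graded.

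For preservation of the filtration, start with $b_j \in S^{m_j}\alg g \subset F_{m_j}$ and set $r_j := |\{(i,a) \colon \alpha_{i,a} = j\}|$, the total number of occurrences of $x_j^{\pm 1}$ in the words $\Psi(x_1),\ldots,\Psi(x_n)$. The iterated coproduct $\Delta^{r_j} b_j$ lies in $\sum_{p_1+\cdots+p_{r_j}=m_j} F_{p_1}\otimes\cdots\otimes F_{p_{r_j}}$. After applying antipodes (which preserve $F_\bullet$) and multiplying the $k_i$ factors assigned to the $i$-th slot, the $i$-th tensor component of $\Psi^*(m\otimes b_1\otimes\cdots\otimes b_n)$ lies in $F_{\sum_a p_{i,a}}$. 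Summing over $i$, the total Hodge filtration degree is at most $\sum_{i,a} p_{i,a} = \sum_j m_j$, proving that $\Psi^*$ respects the Hodge filtration.

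On the associated graded $\gr\mU\alg g = S\alg g$ the product and coproduct are bicommutative with $\alg g$ as primitives, and the antipode restricts to $-1$ on $\alg g$. Via the canonical Hopf algebra isomorphism $(S\alg g)^{\otimes n} = S(H^1\otimes\alg g)$, where a primitive in slot $i$ corresponds to $e^i\otimes g$ (with $e^1,\ldots,e^n$ a basis of $H^1$), I want to show that $\gr\Psi^*$ coincides with the functorial map $S(\bar\Psi^*\otimes\mathrm{id}_{\alg g})$, where $\bar\Psi^*\colon H^1 \to H^1$ is dual to the abelianization $\bar\Psi\colon H_1 \to H_1$, $e_i \mapsto \sum_a \varepsilon_{i,a}\, e_{\alpha_{i,a}}$. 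Writing $b_j = g_{j,1}\cdots g_{j,m_j}$ symmetrically, the iterated coproduct $\Delta^{r_j}b_j$ in $S\alg g$ is the sum over assignments of the primitive factors of $b_j$ into the $r_j$ positions $(i,a)$ with $\alpha_{i,a}=j$; each primitive $g_{j,l}$ landing at position $(i,a)$ picks up the sign $\varepsilon_{i,a}$ from $s^{\beta_{i,a}}$. Commutativity of the product in $S(H^1\otimes\alg g)$ then lets the sum over assignments factor as
\[
\gr\Psi^*\!\left(\prod_{j,l}(e^j\otimes g_{j,l})\right)
= \prod_{j,l}\left(\sum_{(i,a)\colon\alpha_{i,a}=j}\varepsilon_{i,a}\,(e^i\otimes g_{j,l})\right)
= \prod_{j,l}\bigl(\bar\Psi^*(e^j)\otimes g_{j,l}\bigr),
\]
which is exactly $S(\bar\Psi^*\otimes\mathrm{id}_{\alg g})(b_1\otimes\cdots\otimes b_n)$ and depends only on $\bar\Psi \in \End(H_1) = \End(\Z^n)$.

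The main subtlety is the bookkeeping of primitive-factor assignments through the iterated coproducts together with the antipode signs; the essential observation that makes the computation go through is that the product on $\gr \mU\alg g = S\alg g$ is commutative, which is what allows the sum over assignments to collapse into the product indexed by primitive factors, yielding the functorial formula and hence the factorization through $\End(\Z^n)$.
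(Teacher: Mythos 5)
Your argument is correct and follows the same route as the paper: the filtration is preserved because the product, coproduct, and antipode of $\mU\alg g$ are compatible with the PBW filtration, and on the associated graded the formula \eqref{eq_Psi2} applied to $(S\alg g)^{\otimes n}\cong S(H^1\otimes\alg g)$ yields an action depending only on the abelianization of $\Psi$. The paper states this in two sentences; you have merely written out the explicit computation identifying $\gr\Psi^*$ with $S(\bar\Psi^*\otimes\mathrm{id}_{\alg g})$, which is a welcome amplification rather than a different proof.
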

 
 This proposition is a particular case of Theorem~\ref{thm:endf_n_HP}.
 
 \begin{proof}
 The  Hodge filtration is preserved because both the product and coproduct of $\mU \alg g$ preserve
 the Poincar\'e-Birkhoff-Witt filtration. Notice also that if we apply~\eqref{eq_Psi2} to define an $\End(F_n)$ action on $M\otimes (S \alg g)^{\otimes n}$, we get exactly $M\otimes (S \alg g)^{\otimes n}\simeq M\otimes S(H^1\otimes \alg g)$ as a 
 right $\End(F_n)$ module.
 \end{proof}

\begin{rem}\label{r:HH_suspension}
It will be shown in Subsection~\ref{ss41} (see Remark~\ref{r:grCH_C_M}) that for any pointed space $Y_*$ of finite type, the Hochschild-Pirashvili homology
$HH^{\Sigma Y_*}(C(\alg g),M)$ is computed by the complex
\[
M\otimes S(\tilde H^*Y_*\otimes\alg g),
\]
where $\tilde H^*(Y)$ is the reduced cohomology of $Y$ viewed as a negatively graded vector space.
The differential has the same form~\eqref{eq:differential2}.\footnote{Unless certain convergency 
properties are satisfied, $S(-)$ should be undersood as a completed symmetric algebra, i.e. a direct product
$\prod_{m\geq 0} S^m(-)$ rather than a direct sum. Similarly the tensor product should be understood as the completed tensor product with respect to the homological degree of $\alg g$.}
\end{rem}

\section{$\Out(F_n)$ representations. Proof of Theorem~\ref{thm:outfrrep}}\label{s:out_n_rep}
Recall isomorphism \eqref{eq:base}, which in particular implies that in case $M=C$  the  action of $\Aut(F_n)$ on $HH^{\vee_n S^1}(C,M)=HH^{\vee_n S^1}(C)$ descends to an $\Out(F_n)$ action. 
%
%
Recall also that according to Proposition~\ref{p:out_hodge_filtr} the higher Hochschild homology $HH^{\vee_n S^1}(C, M)$ carries a Hodge filtration such that the action of $\Aut(F_n)$ on the associated graded factors through $\GL(n,\Z)$.
In other words, all $\Aut(F_n)$ and $\Out(F_n)$ modules obtained in this manner can be obtained by iterated extension of $\GL(n,\Z)$-modules by $\GL(n,\Z)$-modules.

\subsection{Example 1: Polynomial coalgebras}
If $C=\Q[x_1,\dots,x_n]$ is a cofree cocommutative coalgebra (in potentially odd generators), we have $\alg g=\xi_1\Q \oplus \cdots \oplus \xi_n\Q$ as abelian Lie algebra, where the generators $\xi_j$ are degree shifted by one unit with respect to the generators $x_j$.
In this case the Hodge grading is preserved by the $\Aut(F_n)$ action (because $\mU \alg g$ is commutative) and hence all representations obtained factor through $\GL(n,\Z)$.
Since the differential on $C\otimes(\mU \alg g)^{\otimes n}$ vanishes the higher Hochschild homology is just
\[
HH^{\vee_n S^1}(C)\cong C\otimes S(  H^1 \otimes \alg g)
\]
with the $\Out(F_n)$ action factoring through $GL(n,\Z)=GL(H_1)$, which acts by the standard action on $\Z^n=H^1$.

\subsection{Example 2: Dual numbers}\label{sec:exdualnumbers}
Consider the coalgebra of dual numbers $\Q\oplus x\Q$, where $x$ is a primitive cogenerator of even degree. The (Koszul) dual Lie algebra is the free Lie algebra in one odd generator $\xi$, i.e., $\alg g =\xi \Q \oplus [\xi,\xi] \Q$.
Then the associated graded of $CH^{\vee_n S^1}(C\otimes C^{\otimes\bullet})$ may be identified with 
\[
\gr\, CH^{\vee_n S^1}(C\otimes C^{\otimes\bullet})\cong C\otimes S(H^1\otimes \alg g)\cong C \otimes \Q[\xi_1,\dots, \xi_n, \eta_1,\dots, \eta_n].
\]
Here $\xi_j$ corresponds to $\xi$ on the $j$-th circle and $\eta_j$ corresponds to $\eta=[\xi,\xi]=2\xi^2$ on the $j$-th circle. Notice that $ad_\xi(\xi)=\eta$ and $ad_\xi(\eta)=0$.
The complex has length~2:
\[
0\leftarrow 1\otimes \Q[\xi_1,\dots, \xi_n, \eta_1,\dots, \eta_n]{\stackrel d\longleftarrow} x\otimes \Q[\xi_1,\dots, \xi_n, \eta_1,\dots, \eta_n]\leftarrow 0.
\]
The differential is defined such that
\begin{multline*}
d(x\otimes P(\xi_1,\dots, \xi_n, \eta_1,\dots, \eta_n) )= \sum_{j=1}^n 1\otimes  ad_{\xi_j} P(\xi_1,\dots, \xi_n, \eta_1,\dots, \eta_n)=\\
= \sum_{j=1}^n 1\otimes  \eta_j \frac{\partial}{\partial \xi_j} P(\xi_1,\dots, \xi_n, \eta_1,\dots, \eta_n)
\end{multline*}
The differential can be identified with the de Rham differential on an $n$-dimensional odd vector space, identifying $\eta_j$ with $d_{dR}\xi_j$.
One can identify the corresponding representations of $\GL(n,\Z)$. 
Namely, if we fix in the associated graded the Hodge degree to be $m$, then the corresponding representations of $\GL(n,\Z)$ one obtains correspond to partitions of the form $m=\ell + 1+\cdots +1$.
To be precise the homology is the sum $U^I\oplus U^{II}$, where $U^I=\coker\, d$, $U^{II}=\ker d$. The part of degree $k$ in $\xi$ and $\ell$ in $\eta$ is sent by $d$ to the part of degree $k-1$ 
in $\xi$ and $\ell+1$ in $\eta$:
\[
0\leftarrow \Lambda^{k-1} H^1\otimes S^{\ell+1} H^1{\stackrel d\longleftarrow}\Lambda^{k}H^1\otimes S^{\ell}H^1\leftarrow 0.
\]
The $\GL(n)$ module $\Lambda^k H^1\otimes S^{\ell} H^1$ is a direct sum of 2 representations encoded by partitions $(\ell+k)=\ell+1+\ldots+1$  and $(\ell+k)=(\ell+1)+1+\ldots+1$. We conclude that the kernel of $d$ in this bigrading is $V_{(\ell,1^k)}$ and the cokernel of $d$ is $V_{(\ell+2,1^{k-2})}$. The bigrading by $\xi$ and $\eta$ is preserved in $C\otimes (\U\alg g)^{\otimes n}$ only as a filtration. Instead one can consider the {total $\xi$ grading} by assigning 1 to each $\xi$ and 2 to each $\eta=[\xi,\xi]$. 
The component $U^I_N\oplus U^{II}_N$ in the homology of total $\xi$ degree $N$ is a filtered space, whose associated graded is
\[
\gr U^I_N=\bigoplus_{2\ell+k=N+1}V_{(\ell,1^k)},\quad
\gr  U^{II}_N=\bigoplus_{2\ell+k=N}V_{(\ell,1^k)}.
\]
For both $U^I$ and $U^{II}$ the Hodge degree of $V_{(\ell,1^k)}$ is $\ell+k$.

\subsection{The lowest non-trivial example worked out}\label{sec:lowest}
Let us consider the first $\Out(F_n)$ representation obtained by the above methods that does not factor through $\GL(n,\Z)$.
It is obtained as the cokernel of the differential in the dual numbers example above for $n=3$ and the total $\xi$ degree~3. It was denoted by $U^I_3$ in the previous subsection.
The representation is 7 dimensional.
As in Subsection \ref{sec:exdualnumbers} one sees that the associated graded representation splits into two $\GL(3,\Z)$ representations
\[
\gr U_{3}^I = V_{(2)} \oplus V_{(1,1,1)}.
\]
In other words, $U_{3}^I$ is an extension
\[
0\to  V_{(2)} \to U_{3}^I \to V_{(1,1,1)} \to 0.
\]
A representative of the cohomology class in $HC^{\vee_3 S^1}(C)$ spanning the $V_{(1,1,1)}$ part is 
\[
e := 1\otimes \xi\otimes \xi \otimes \xi.
\]
Representatives forming a basis of $V_{(2)}$ are
\begin{align*}
f_1&:= 1\otimes [\xi,\xi] \otimes \xi\otimes 1 \cong  -1\otimes \xi \otimes [\xi,\xi] \otimes  1 &
f_2&:= 1\otimes [\xi,\xi] \otimes 1\otimes \xi  \\
f_3&:= 1\otimes 1 \otimes [\xi,\xi] \otimes \xi &
f_4&:= 1\otimes [\xi,\xi] \xi \otimes 1 \otimes 1 \\
f_5&:= 1\otimes 1\otimes [\xi,\xi] \xi \otimes 1  &
f_6&:= 1\otimes 1\otimes 1\otimes [\xi,\xi] \xi.
\end{align*}

\subsection{The proof of Theorem \ref{thm:outfrrep}}
More generally let us consider representation $U^I_{3}$ of $\Out(F_n)$  for arbitrary $n\geq 3$.
We claim that this representation satisfies the requirements of Theorem \ref{thm:outfrrep}, i.e., it does not factor through $\GL(n,\Z)$ and it has dimension $\frac{n(n^2+5)}{6}$.

Indeed, as in Subsection~\ref{sec:lowest} we can identify the associated graded representation under the Hodge filtration with 
\[
 \gr U^I_{3} = V_{(2)}\oplus V_{(1,1,1)}
\]
where $V_{(2)}$ and $V_{(1,1,1)}$ are the irreducible representations of the linear group $\GL(n)$ corresponding to the partitions $(2)$ and $(1+1+1)$. Hence we find that indeed
\[
 \dimens U^I_{3} 
 = \dimens V_{(2)} + \dimens V_{(1,1,1)}
 = \frac{n(n+1)}{2}+ {n \choose 3} = \frac{n(n^2+5)}{6}.
\]

Next, we check that the representation does not factor through $\GL(n)$.  Consider $E_{12}, E_{\bar 1 \bar 2}\in \Out(F_n)$ that send 
\[
E_{12}(x_i)=
\begin{cases}
x_1x_2,& i=1;\\
x_i,& \textrm{otherwise;}
\end{cases}
\qquad
E_{\bar 1 \bar 2}(x_i)=
\begin{cases}
x_2x_1,& i=1;\\
x_i,& \textrm{otherwise.}
\end{cases}
\]
We will show that the action of $E_{12}$ is different from that of $E_{\bar 1 \bar 2}$  in the representation $U^I_{3}$ for $n\geq 3$.
Indeed, choosing basis vectors as in Subsection \ref{sec:lowest} we find that 
\[
 E_{12} \cdot (1, \xi, \xi, \xi,1,\dots,1) = (1, \xi, \xi, \xi,1,\dots,1)+ {\frac 12} (1, [\xi, \xi],1, \xi,1,\dots,1)
\]
while 
\[
 E_{\bar 1 \bar 2} \cdot (1, \xi, \xi, \xi,1,\dots,1) = (1, \xi, \xi, \xi,1,\dots,1)- {\frac 12}(1, [\xi, \xi],1, \xi,1,\dots,1).
\]
To recall $U^I_{3}$ is the cokernel of $d$. Thus 
we  need to verify that $(1, [\xi, \xi],1, \xi,1,\dots,1)\in \Q\otimes (S\alg g)^{\otimes n}$ is not in the image of $d$. As we have seen in Subsection~\ref{sec:exdualnumbers}, $d$ is the de Rham differential which is acyclic on non-constant polynomials in $\xi_i$ and $\eta_j$, thus we only have to check that the corresponding polynomial is not de Rham closed: 
\[
\sum_{j=1}^n   \eta_j \frac{\partial}{\partial \xi_j} (\eta_1\xi_3) =\eta_1\eta_3\neq 0.
\]
\hfill\qed

\subsection{Bead representations}
Generalizing the example of dual numbers we may consider the coalgebra
\[
C_N=\Q\oplus x_1\Q\oplus x_2\Q\oplus\ldots\oplus x_N\Q,
\]
where the cogenerators $x_i$ are of even degrees and primitive.
The Koszul dual Lie algebra is again free
\[
\alg g=FreeLie(\xi_1,\dots, \xi_N).
\]
There is a $\Z^N$ grading on $C_N$ and a representation of $\bbS_N$, and hence a similar grading and action on the higher Hochschild homology $HH^{\vee_n S^1}(C_N)$.
We may introduce a representation of $\Out(F_n)$ for every irreducible representation $V_\lambda$ of $\bbS_N$ labelled by a partition $\lambda$ of $N$: 
\[
U_\lambda = HH^{\vee_n S^1}(C_N)^{1,\dots, 1} \otimes_{\bbS_N} V_\lambda
\]
Here the superscript $(\cdot)^{1,\dots, 1}$ shall mean that we pick out the piece of $\Z^N$-degree $(1,\dots, 1)$.
We will call $U_\lambda$ the \emph{bead representation}\footnote{The name stems from the fact that elements of $\Omega C_N$ can be understood as linear combinations of configurations of beads of $N$ colors arranged on a string.} of $\Out(F_n)$ associated to the partition $\lambda$. Notice that the obtained complex is again of length~2.
Thus we have again $U_\lambda=U_\lambda^I\oplus U_\lambda^{II}$ where $U_\lambda^I$ is the cokernel of the differential and $U_\lambda^{II}$ is the kernel. We will call $U_\lambda^I$ the bead representation of first type and $U_\lambda^{II}$ the bead representation of second type. \footnote{The representations $U_N^{I,II}$ considered in Subsection~\ref{sec:exdualnumbers} correspond to
$U_{(N)}^{I,II}$ in the new notation.}

 {\bf Open problem:} Describe $U_\lambda$. In particular, what are the dimensions $\mathit{dim}(U_\lambda^{I,II})$? If we decompose the associated graded $\gr U_\lambda$ into irreducible representations of $\GL(n,\Z)$ (actually $\GL(n,\R)$) 
 \[
 \gr U_\lambda \cong \oplus_\mu V_\mu
 \]
 which partitions $\mu$ occur in the direct sum, with what multiplicity?

\section{Complexes $CH^{\vee_n S^1}(\L_*)$. Proof of Theorem~\ref{thm:endf_n_HP}}\label{s:CH_vee_n}

Recall that
in case the space $X$ (respectively pointed space $X_*$) is obtained as a realization of a (pointed) finite simplicial set
$\X_\bullet\colon\Delta^{op}\to \Fin$ (respectively $\X_\bullet\colon \Delta^{op}\to\Gamma$), the higher Hochschild homology $HH^X(\L)$ (respectively $HH^{X_*}(\L_*)$) can be computed as the homology of the totalization of the cosimplicial chain complex $\L\circ \X\colon\Delta\to dgVect$ (respectively 
 $\L_*\circ \X_*\colon\Delta\to dgVect$).   
  The same construction works for realizations of bisimplicial (and more generally  multisimplicial) sets. Indeed, if $\X_{\bullet\bullet}$ is a bisimplicial set, then its realization $|\X_{\bullet\bullet}|$ is homeomorphic to the realization 
 $|\diag(\X_{\bullet\bullet})|$ of its diagonal simplicial set. On the other hand, one also has the 
 Eilenberg-Zilber quasi-isomorphism
 \begin{equation}\label{eq_EZ}
 \Tot(\diag \L\circ \X_{\bullet\bullet}){\stackrel {EZ}\longrightarrow}\Tot(\L\circ\X_{\bullet\bullet}).
 \end{equation}
 As the first complex computes the Hochschild-Pirashvili homology of $|\diag(\X_{\bullet\bullet})|=
 |\X_{\bullet\bullet}|$, so does the second. 
 
 Now notice that the complexes $M\otimes\left(\Omega C\right)^{\otimes n}$ can be obtained as totalization
 of an $n$-multicosimplicial chain complex (rather than just cosimplicial). (In fact its diagonal totalization is $M\otimes\Omega\left(C^{\otimes n}\right)$.) The corresponding multicosimplicial complex is
 obtained as the composition of $M\otimes C^{\otimes\bullet}$ with an $n$-multisimplicial model 
 of $\vee_n S^1$. Let $S_\bullet^1$ denote the standard simplicial model for $S^1$: its set of $k$-simplices
 consists of a basepoint $*$ and also all monotonic non-constant sequences of $0$\rq{}s and $1$\rq{}s of length $k+1$. 
 This set can be identified with $k_*$ (where $i\in k_*$ corresponds to a sequence with $i$ $1$\rq{}s). The $n$-multisimplicial model for $\vee_n S^1$, 
 we denote it by $(\vee_n S^1)_{\underbrace{\bullet\ldots\bullet}_n}$,
 is obtained as a degreewise wedge of $n$ $n$-multisimplical sets. The $i$-th summand of the wedge is the product of $S_\bullet^1$ and $(n-1)$ constant one-point simplicial sets, with $S_\bullet^1$ appearing on the $i$-th place in the product. Notice that the $(k_1,k_2,\ldots,k_n)$ component of 
 $(\vee_n S^1)_{\underbrace{\bullet\ldots\bullet}_n}$ is the set $\bigvee_{i=1}^n (k_i)_*\simeq
 (k_1+\ldots +k_n)_*$.  Thus the totalization of our multicosimplial complex is
 \begin{equation}\label{eq_CH_vee_n}
CH^{\vee_n S^1}(\L_*):=\Tot(\L_*\circ (\vee_n S^1)_{\underbrace{\bullet\ldots\bullet}_n})
=\left(\prod_{(k_1,\ldots,k_n)} N\L_*\left(\Sigma_{i=1}^n k_i\right)[\Sigma_{i=1}^n k_i],\, d=d_1+\ldots+d_n\right),
\end{equation}
where
\begin{equation}\label{eq_NL}
NL_*(k)=\bigcap_{i=1}^k \ker s_i^*,
\end{equation}
and $s_i^*\colon \L(k_*)\to\L(k_*\setminus\{i\})$ is the map induced by the inclusion 
$$s_i\colon
k_*\setminus\{i\}\subset k_*.$$

The action of $\End(F_n)$ on $CH^{\vee_n S^1}(\L_*)$ is defined analogously as that on $CH^{\vee_n S^1}(M\otimes C^{\otimes\bullet})=M\otimes(\Omega C)^{\otimes n}$, see~\eqref{eq_Psi2}.\footnote{Recall that we assume that $C$ is simply connected. If we only assume that $C$ is connected, than the complex 
$CH^{\vee_n S^1}(M\otimes C^{\otimes\bullet})$ is $M\hat \otimes(\Omega C)^{\otimes n}$, where instead of the cobar complex we take the completed cobar and instead of tensor product the completed tensor product.} Notice that the coproduct on $\Omega C$ is the sum of coshuffles, and the product is just concatenation. 
 Let $\gamma$ lie in the $(k_1,\ldots,k_n)$
component of~\eqref{eq_CH_vee_n}, and $\Psi\in \End(F_n)$ is such that $x_j$ appears in total $r_j$ times
in $\Psi(x_1),\,\Psi(x_2),\ldots,\,\Psi (x_n)$. One has that $\Psi^*(\gamma)$ is the sum of $r_1^{k_1} 
\cdot r_2^{k_2}\cdot\ldots\cdot r_n^{k_n}$ elements each of which is obtained from $\gamma$ by
some permutation of its inputs. More concretely, $\Psi$ defines a map $\vee_n S^1\to \vee_n S^1$ such that any point on the $i$-th circle 
has exactly $r_i$ preimages. We put $k_1$ points on the first circle in the target wedge, $k_2$ on the second,
$\ldots$, $k_n$ on the last one. These points correspond to the inputs of $\gamma$. For
every point in the target we choose a preimage point (thus for the $i$-th circle there are $r_i^{k_i}$ choices
making the total of $\prod_{i=1}^n r_i^{k_i}$ choices). For every such choice we get a collection of points on the source wedge, which contributes a summand in $\Psi^*(\gamma)$, that has to be taken with the 
sign of permutation of inputs of $\gamma$. 

Consider  examples similar to those given in Example~\ref{ex_Psi}: 

(a) $n=1$; $\Psi(x_1)=x_1^2$. In this case,
\[
\Psi^*(\gamma(x_{11},\ldots,x_{1k_1}))=
\sum_{i=0}^{k_1}\sum_{\sigma\in \Sh(i,k_1-i)}(-1)^\sigma \gamma(\sigma(x_{11},\ldots,x_{1k_1})).
\]
Here  and below $\Sh(i,j)$ denotes the set of shuffles of an $i$-elements set with a $j$-elements set.

(b) $n=1$, $\Psi(x_1)=x_1^{-1}$. In this case 
\[
\Psi^*(\gamma(x_{11},\ldots,x_{1k_1}))=
(-1)^{\frac{k_1(k_1-1)}2}\gamma(x_{1k_1},\ldots,x_{11}).
\]

(c)  $n=2$; $\Psi(x_1)=x_1x_2$, $\Psi(x_2)=x_2$:
\[
\Psi^*(\gamma(x_{11},\ldots,x_{1k_1},x_{21},\ldots,x_{2k_2}))=
\sum_{i=0}^{k_2}\sum_{\sigma\in \Sh(i,k_2-i)}(-1)^\sigma \gamma(x_{11},\ldots,x_{1k_1},
\sigma(x_{1k_1+1},\ldots,x_{1k_1+i},x_{21},\ldots,x_{2k_2-i})).
\]

\begin{prop}\label{pr:act_topol}
The action of $\End(F_n)$ on $CH^{\vee_n S^1}(L_*)$ defined above coincides in the homology with the topological action.
\end{prop}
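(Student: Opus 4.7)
The plan is to realize each $\Psi \in \End(F_n)$ by an explicit multi-simplicial map between suitably refined models of $\vee_n S^1$, and then verify that the resulting chain-level pullback agrees with the combinatorial formula~\eqref{eq_Psi2} on the totalization.

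First I would write down a pointed topological representative $|\Psi|\colon \vee_n S^1 \to \vee_n S^1$: partition the $i$-th source circle into $k_i$ consecutive arcs and send the $j$-th arc once around the $\alpha_{ij}$-th target circle with orientation $\varepsilon_{ij}$. This map is not directly multi-simplicial for the model $(\vee_n S^1)_{\bullet\cdots\bullet}$ used in \eqref{eq_CH_vee_n}, but it becomes so once the $i$-th source circle is refined to a simplicial set with $k_i$ non-degenerate $1$-simplices (one per arc) instead of one. The refined and unrefined multi-simplicial sets have the same geometric realization $\vee_n S^1$, so by the discussion around~\eqref{eq_EZ} the refinement induces a quasi-isomorphism between the corresponding multi-cosimplicial totalizations computing $HH^{\vee_n S^1}(\L_*)$.

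Next I would compute the pullback $|\Psi|^*$ on the refined complex. An element in the $(k_1,\ldots,k_n)$-component of $CH^{\vee_n S^1}(\L_*)$ is essentially a chain in $\L_*$ indexed by a configuration of $\sum_i k_i$ labelled points on the target circles; its pullback under $|\Psi|$ is the signed sum over all ways to lift each labelled point to one of its preimages. Since a point on the $j$-th target circle has $r_j$ preimages (one per occurrence of $x_j^{\pm 1}$ in the words $\Psi(x_i)$), the total number of summands is $r_1^{k_1}\cdots r_n^{k_n}$, exactly as described in the text. An orientation-reversing arc ($\varepsilon_{ij}=-1$) contributes the Koszul sign $(-1)^{k(k-1)/2}$ of Example~(b), while the Eilenberg--Zilber shuffles implementing the transfer between refined and unrefined models reproduce the shuffle sums of Examples~(a) and (c). Transporting back by~\eqref{eq_EZ} identifies the action induced by $|\Psi|$ on homology with the combinatorial action $\Psi^*$.

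To keep the bookkeeping manageable I would verify the coincidence only on a set of Nielsen-type generators of $\End(F_n)$ (right multiplications $x_i\mapsto x_ix_j$, inversions $x_i\mapsto x_i^{-1}$, transpositions of the $x_i$, and projections $x_i\mapsto 1$), each reducing to one of the toy computations of Example~\ref{ex_Psi}. Compositional compatibility $(\Psi_1\Psi_2)^* = \Psi_2^*\Psi_1^*$ was already proved combinatorially in Proposition~\ref{p:out_action1}, and on the topological side it is automatic, so the generator case implies the general one. The main obstacle will be sign bookkeeping: making sure that the orientation-reversal signs in the inversion case and the Eilenberg--Zilber shuffle signs in the multiplication case fit consistently with the Koszul signs in~\eqref{eq_Psi2}. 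Once these sign checks succeed the proposition follows.
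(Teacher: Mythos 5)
Your overall strategy---realize $\Psi$ by a multisimplicial map on a subdivided model of $\vee_n S^1$ and transfer back to the minimal model via the Eilenberg--Zilber maps \eqref{eq_EZ} and their natural chain homotopy inverses---is precisely the ``idea of the proof'' that the paper records under Proposition~\ref{pr:act_topol}. (Note that the paper itself does not carry this out; its actual justification is deferred to the $\Omega$-module formalism of Theorem~\ref{t:HP_suspensions2} and Remark~\ref{rem:action_ident}, where $CH^{\vee_n S^1}(\L_*)$ is identified with $\CH^{\vee_n S^1}(\L_*)$ built from the Koszul dual comodule $\tilde H_*\bigl((\vee_n S^1)^{\wedge\bullet}/\Delta^\bullet\bigr)$, on which $\End(F_n)$ acts tautologically.) Your description of the chain-level pullback as the signed sum over all lifts of the marked points to their preimages also matches the paper's own description of $\Psi^*$, so the computational core of your plan is sound.

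There is, however, one genuine gap: the reduction to ``Nielsen-type generators.'' The monoid $\End(F_n)$ is \emph{not} generated by Nielsen automorphisms, transpositions, inversions and the projections $x_i\mapsto 1$. For $n=1$ this is immediate: the submonoid of $\End(F_1)\cong(\Z,\cdot)$ generated by $\{0,\pm1,2\}$ misses $x\mapsto x^3$. For general $n$ the endomorphism $x_i\mapsto x_i^2$ (for all $i$) is injective but not surjective with image of rank $n$, whereas any composite containing a projection has image of rank at most $n-1$ and any composite of automorphisms is an automorphism; even if you admit the elementary ``squaring'' $x_i\mapsto x_i^2$ as a generator, the abelianized determinants of your generators lie in $\{0,\pm1,\pm2\}$, so no composite can realize $x_1\mapsto x_1^3$. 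Hence ``the generator case implies the general one'' fails, and you cannot lean on Proposition~\ref{p:out_action1} to propagate the verification from generators to all of $\End(F_n)$. The repair is to drop the shortcut and run the lifting/Eilenberg--Zilber computation for an arbitrary word map $\Psi$ as in \eqref{eq_Psi2}---which your second and third paragraphs already set up uniformly, each letter $x_{\alpha_{ij}}^{\varepsilon_{ij}}$ contributing one arc, one coshuffle factor and (for $\varepsilon_{ij}=-1$) one antipode/orientation sign---rather than only for a putative generating set.
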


\begin{proof}[Idea of the proof]
One can check that for all elements $\Psi\in \End(F_n)$ their action $\Psi^*$ on $CH^{\vee_n S^1}(L_*)$
can be decomposed into a composition of  maps induced by multisimplicial maps,  Eilenberg-Zilber maps~\eqref{eq_EZ}, and some natural chain homotopy inverses to those maps. 
\end{proof}

This proposition is a partial case of Theorem~\ref{t:HP_suspensions2}. That\rq{}s why we choose not to give 
a detailed proof of it, but only mention that there is a proof which goes through a careful study of 
 multi-simplical maps. (This argument is similar to the explicit identification of the surface product studied in~\cite{GTZ0}.) Indeed, Theorem~\ref{t:HP_suspensions2} among other things states that
 the complexes $CH^{\vee_n S^1}(L_*)$ are identical to $\CH^{\vee_n S^1}(L_*)$, where the latter
 ones are constructed using the definition of the Hochschild-Pirashvili homology in terms of derived maps 
 of right $\Gamma$ modules. Moreover, Remark~\ref{rem:action_ident} asserts that the induced action of 
 $\End(F_n)$ on $\CH^{\vee_n S^1}(L_*)$ is identical to the one  on $CH^{\vee_n S^1}(L_*)$
 defined in this section.
  We will also see in Subsection~\ref{ss44} that the reason 
that the $\End(F_n)$ action on  $HH^{\vee_n S^1}(L_*)$ can be lifted on the level of chains is the coformality
of the induced $\End(F_n)$  action on the $\Omega$-module $C_*((\vee_n S^1)^{\wedge \bullet})$.\footnote{By this we mean that every induced map of the action is coformal, see Definition~\ref{d:coformal} and Proposition~\ref{p:coformal_susp}.}

\begin{proof}[Proof of Theorem~\ref{thm:endf_n_HP}]
At this point we only need to explain what is the Hodge splitting in $CH^{\vee_n S^1}(L_*)$,
 why it is preserved by the $\End(F_n)$ action as a filtration, and why on the associated graded complex
$\gr\, CH^{\vee_n S^1}(L_*)$ this action factors through $\End(\Z^n)$.

In case $n=1$, i.e. for the usual Hochschild complex $CH^{S^1}(\L_*)$, the Hodge splitting is obtained by noticing that the action of $\End(F_1)=(\Z,*)$ splits this complex into a direct  product 
of spaces numbered by non-negative integers, such that on the $m$-th component $r\in(\Z,*)$ acts as multiplication by $r^m$~\cite{GerstSchack,Loday}. The projection on the $m$-th component is called $m$-th
Euler idempotent $e_m$. Notice that each component $N\L_*(\ell)[\ell]$ of the complex
\[
CH^{S^1}(\L_*)=\Tot(\L_*\circ S^1_\bullet)=
\left(\prod_{\ell\geq 0} N\L_*(\ell)[\ell],d\right)
\]
is acted on by $\bbS_\ell$ and thus by the group algebra $\Q[\bbS_\ell]$. The Euler idempotent $e_m(\ell)$ is
obtained via this action and is in fact an element of $\Q[\bbS_\ell]$.  To give a bit more insight, one has 
an isomorphism of symmetric sequences:
\[
\Com\circ\Lie\stackrel{\simeq}{\longrightarrow}\Ass,
\]
induced by
the Poincar\'e-Birkhoff-Witt  map. The image of $e_m(\ell)$  is exactly
\[
\left[\Com(m)\circ\Lie\right](\ell)\subset \Ass(\ell)=\Q[\bbS_\ell].
\]
When $n\geq 2$, to obtain a similar splitting in Hochschild-Pirashvili homology one can  use the action of the monoid $(\Z,*)^{\times n}\subset\End(F_n)$ 
consisting of the homotopy classes of maps $\vee_n S^1\to \vee_n S^1$ sending each circle into itself. 
The complex $CH^{\vee_n S^1}(L_*)$ splits into a direct product of spaces numbered by $n$-tuples 
$(m_1,\ldots,m_n)$ of non-negative integers. Element $(r_1,\ldots,r_n)\in (\Z,*)^{\times n}$ acts on the
$(m_1,\ldots,m_n)$ component of the Hodge splitting as multiplication by $r_1^{m_1}\cdot\ldots\cdot
r_n^{m_n}$. Each $(\ell_1,\ldots,\ell_n)$ component $N\L_*(\ell_1+\ldots+\ell_n)$ 
of $\Tot(\L_*\circ (\vee_n S^1)_{\underbrace{\bullet\ldots\bullet}_n})$ is acted on by $\bbS_{\ell_1}\times
\ldots\times \bbS_{\ell_n}$. The projection onto the $(m_1,\ldots,m_n)$ Hodge component is given by 
$e_{m_1}(\ell_1)\otimes\ldots\otimes e_{m_n}(\ell_n)$. We define the {\it total Hodge degree} as 
$m=m_1+\ldots +m_n$. One can see that the action of $\End(F_n)$ preserves it as a filtration. 

To see that the $\End(F_n)$ action on $\gr\, CH^{\vee_n S^1}(L_*)$ factors through $\GL(n,\Z)$,
see equations~\eqref{eq:grCH_m1}, \eqref{eq:grCH_m2}, and  Remark~\ref{r:grCH_wedge_n}, which describe $\gr\, CH^{\vee_n S^1}(L_*)$  in terms of $H^1(\vee_n S^1)$. 
\end{proof}

\section{Hochschild-Pirashvili homology on suspensions. Proof of Theorem~\ref{t:HP_suspensions1}}\label{s4}

\subsection{Complexes $\gr\, \CH^{\Sigma Y_*}(\L_*) $}\label{ss41}

In this subsection we describe complexes computing higher Hochschild homology 
on suspensions $HH^{\Sigma Y_*}(\L_*) $.  These complexes depend only on $\tilde H_*(Y_*)$ and 
as we will later see in Subsection~\ref{ss44} they can be naturally identified with the associated graded of $\CH^{\Sigma Y_*}(\L_*) $.

One of the two  reasons for the Hodge splitting in the higher Hochschild homology (on a suspension) is the formality of the $\Gamma$-module $C_*(X_*^\bullet)$ in case $X_*=\Sigma Y_*$. Recall that a $\Gamma$-module is said  {\it formal} 
if it is quasi-isomorphic via a zigzag of quasi-isomorphisms to its homology $\Gamma$-module. Similarly,
a map between $\Gamma$-modules is formal if this map is quasi-isomorphic via a zigzag of quasi-isomorphisms of $\Gamma$-modules maps to the induced map in their homology.

\begin{lemma}\label{l:formality1}
If a pointed space $X_*$  is of finite type and is rationally formal, then the right $\Gamma$ module 
$C_*(X_*^\bullet)$  is also rationally formal.
If a pointed map $X_*\to Y_*$ between spaces of finite type is rationally formal, then the induced
map of $\Gamma$ modules $C_*(X_*^\bullet) \to C_*(Y_*^\bullet)$ is also formal.
\end{lemma}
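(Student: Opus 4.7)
I would extract the $\Gamma$-module structure on $C_*(X_*^\bullet)$ from an augmented CDGA model of $X_*$. To every augmented CDGA $(A,\epsilon)$ associate the covariant functor $A^{\otimes \bullet} : \Gamma \to \mathrm{CDGA}$ sending $S_* \mapsto A^{\otimes (S \setminus \{*\})}$, where a morphism $f: S_* \to T_*$ acts by multiplication in $A$ along the fibers $f^{-1}(t) \cap (S\setminus\{*\})$ over each $t \in T \setminus \{*\}$, while factors indexed by $s$ with $f(s)=*$ are collapsed using $\epsilon$. Since taking $k$-fold tensor products of CDGAs preserves quasi-isomorphisms in characteristic zero, the assignment $A \mapsto A^{\otimes \bullet}$ sends augmentation-preserving CDGA quasi-isomorphisms to objectwise quasi-isomorphisms of such functors.

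\textbf{Key comparison.} For $X_*$ of finite type, the Künneth CDGA quasi-isomorphism assembles into a natural transformation
\[
\kappa_{X_*}\colon A_{PL}(X_*)^{\otimes \bullet} \stackrel{\simeq}{\longrightarrow} A_{PL}(X_*^\bullet)
\]
of covariant functors $\Gamma \to \mathrm{CDGA}$. Naturality need only be checked on standard generators of $\Gamma$: diagonals dualize to multiplication on $A_{PL}(X_*)$, projections to insertion of the unit, and basepoint-collapse morphisms precisely to the augmentation coming from $* \hookrightarrow X_*$. If $X_*$ is rationally formal there is an augmentation-preserving zigzag of CDGA quasi-isos $A_{PL}(X_*) \simeq H^*(X_*)$; applying $(-)^{\otimes \bullet}$ and splicing with $\kappa_{X_*}$ yields a zigzag of quasi-isomorphisms of functors $\Gamma \to \mathrm{CDGA}$ connecting $A_{PL}(X_*^\bullet)$ with $H^*(X_*)^{\otimes \bullet} = H^*(X_*^\bullet)$. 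Linear duality, valid objectwise because each $X_*^S$ is again of finite type, then converts this into the desired zigzag of right $\Gamma$-module quasi-isomorphisms between $C_*(X_*^\bullet)$ and its homology $\Gamma$-module. The second assertion follows by running exactly the same argument on a rationally formal map $X_* \to Y_*$: the compatibility square of CDGA quasi-iso zigzags is preserved by $(-)^{\otimes \bullet}$ and by linear duality, yielding the $\Gamma$-module formality of the induced map $C_*(X_*^\bullet) \to C_*(Y_*^\bullet)$.

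\textbf{Expected main obstacle.} The delicate step is verifying the full naturality of $\kappa_{X_*}$ with respect to all of $\Gamma$, rather than only the subcategory of injections where Eilenberg--Zilber/Künneth is manifestly natural. In particular the basepoint-insertion morphisms force the use of augmented (not merely unital) CDGAs, and one needs a sufficiently coherent monoidal model of $A_{PL}$ so that the Künneth comparison at every arity is strictly compatible both with multiplication across fibers of $f$ and with the augmentation $A_{PL}(X_*) \to A_{PL}(*) = \Q$. This can be arranged by working with Sullivan's PL forms directly, or by passing to a cofibrant replacement in the projective model structure on functors $\Gamma \to \mathrm{CDGA}$, where formality of the diagram reduces to formality of its values on a set of generators.
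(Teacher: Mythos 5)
Your proposal is correct and follows essentially the same route as the paper: pass to the Sullivan/PL-forms model, identify $A_{X_*^\bullet}$ with the left $\Gamma$-module $\Q\otimes A_{X_*}^{\otimes\bullet}$ via K\"unneth, replace $A_{X_*}$ by $H^*(X_*)$ using formality as an \emph{augmented} algebra, and dualize objectwise using the finite-type hypothesis, with the map case following by functoriality. The paper states this as a one-line zigzag of quasi-isomorphisms; your write-up merely makes explicit the naturality checks that the paper leaves implicit.
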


\begin{proof}
By formality of a space we understand formality of its Sullivan algebra $A_{X_*}$ as augmented algebra
and similarly for a map between spaces. We show explicitly the first statement. The second one follows from functoriality of the construction. One has a quasi-isomorphism of $\Gamma$-modules:
\[
C_*(X_*^\bullet)\simeq \left(A_{X_*^\bullet}\right)^\vee\simeq
\left(\Q\otimes A_{X_*}^{\otimes \bullet}\right)^\vee
\simeq \left(\Q\otimes H^*(X_*)^{\otimes \bullet}\right)^\vee
\simeq H_*(X_*^\bullet).
\]
\end{proof}

\begin{lemma}\label{l:formality2}
Any suspension of a space of finite type is rationally formal and, moreover, any suspension
of a map between spaces of finite type is rationally formal.
\end{lemma}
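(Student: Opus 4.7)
My plan is to reduce both assertions to the known formality of a wedge of spheres, exploiting the rational equivalence $\Sigma Y \simeq_{\Q} \bigvee_i S^{n_i}$ cited in the introduction from \cite[Theorem~24.5]{FHT1}. Since formality of augmented CDGAs is a rational-homotopy invariant, the space assertion follows once one knows that a wedge of spheres is formal, and this is verified by an explicit CDGA quasi-isomorphism: the minimal Sullivan model of $\bigvee_i S^{n_i}$ is freely generated over $\Q$ by primitives (one per sphere, in the matching degree) together with auxiliary generators added inductively to kill the cup products of primitives and all subsequent non-cohomological cocycles; define a map $\varphi$ to $H^*(\bigvee_i S^{n_i})$ by sending each primitive to its cohomology class and each auxiliary generator to zero. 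This respects the differentials because the differential of every auxiliary generator is decomposable (the model being minimal), and all decomposables vanish in $H^*(\bigvee_i S^{n_i})$, whose product is trivial in positive degrees.

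For the map assertion, I would pass to the Quillen Lie model. Assuming without loss of generality that $Y$ and $Z$ are pointed connected of finite type, the Lie models of $\Sigma Y$ and $\Sigma Z$ are quasi-isomorphic to the free graded Lie algebras $\mathbb{L}(\tilde H_{*-1}(Y;\Q))$ and $\mathbb{L}(\tilde H_{*-1}(Z;\Q))$ with zero differential. The crucial claim is that $\Sigma f$ is modeled by the free extension $\mathbb{L}(\tilde H_{*-1}(f))$, i.e., the Lie algebra morphism determined on generators by the linear map $\tilde H_{*-1}(f)$ with no higher Lie-bracket correction terms. Since both source and target carry zero differential, this morphism is tautologically equal to its induced map on homology, establishing Lie-model formality; transferring along Koszul duality between Quillen and Sullivan models for simply-connected spaces of finite type then yields formality of $\Sigma f$ as a morphism of augmented CDGAs.

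The main obstacle is justifying that the Quillen model of $\Sigma f$ really is free of bracket corrections --- this is precisely where being a suspension map matters, as the non-formality of the Hopf map $S^3\to S^2$ (mentioned later in the paper) shows that arbitrary maps between suspensions can carry nontrivial bracket contributions. The needed input is the classical fact that suspension kills Whitehead products: via the loop--suspension adjunction $[\Sigma Y,\Sigma Z]\cong[Y,\Omega\Sigma Z]$ and the James splitting of $\Omega\Sigma Z$, the adjoint of $\Sigma f$ factors through the bottom James filtration stage $Z\hookrightarrow\Omega\Sigma Z$, which corresponds precisely to the indecomposable generators of the free Lie algebra and contains no bracketed expressions.
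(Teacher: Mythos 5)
Your argument is essentially correct and, on the whole, lands in the same place as the paper's, but it gets there differently in both halves. For the space statement the paper argues that a suspension is a co-$H$-space, hence coformal with Quillen model the free Lie algebra $\FreeLie(\tilde H_*(Y_*))$ with zero differential, and then reads off formality from the Koszul dual commutative algebra (generated by $\tilde H^*(\Sigma Y_*)$ with vanishing products); you instead pass to a rationally equivalent wedge of spheres and write down the classical explicit quasi-isomorphism from the minimal model onto cohomology. Both work; the paper's route has the advantage that the free-Lie-algebra model is exactly the object reused later (Corollary~\ref{cor:conf_susp}, Section~\ref{s:rht_map}), while yours is more elementary and self-contained. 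For the map statement the paper only says ``the argument is similar,'' and your proposal actually supplies the missing content: the genuinely nontrivial input is that the Quillen model of $\Sigma f$ is the free Lie map $\FreeLie(\tilde H_*(f))$ with no bracket corrections, and your justification via the loop--suspension adjunction and the James splitting (the adjoint of $\Sigma f$ factors as $Y\to Z\to \Omega\Sigma Z$ through the bottom James stage, so by Bott--Samelson $H_*(\Omega\Sigma f)$ is the tensor-algebra map on generators, hence its restriction to primitives preserves the generating subspace) is the right argument. The final transfer to CDGA formality does require one more observation you leave implicit: coformality alone does not give formality; what saves you is that the Chevalley--Eilenberg dual of a generator-preserving map of free Lie algebras with zero differential strictly commutes with the projections onto the trivial algebras $\Q\oplus \tilde H^*(\Sigma Y_*)$, which is exactly the paper's Koszul-duality sentence.

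One genuine caveat: your ``without loss of generality $Y$ and $Z$ are connected'' is not actually harmless. If $Y$ is disconnected, $\Sigma Y$ contains a wedge of circles and is not simply connected, so the minimal Sullivan model no longer has the finite-type ``primitives plus decomposable corrections'' shape you describe, and Bott--Samelson/James fails integrally for disconnected $Z$ (e.g.\ $H_*(\Omega S^1;\Q)=\Q[t,t^{-1}]\neq T(\tilde H_*S^0)$), so the weight-one argument needs the Malcev-completed version of the free Lie algebra. The paper handles the disconnected case separately by writing $\Sigma(\coprod_i Y_i)$ as a wedge of $\Sigma Y_i$'s and circles and invoking formality of wedges; for a map between disconnected spaces this decomposition is not functorial, so some version of the completed argument is unavoidable. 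This does not invalidate your strategy, but the reduction should be stated as an additional case to treat rather than as a loss-free assumption.
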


 Recall that a map of pointed spaces is formal if the induced map of Sullivan augmented algebras is formal, i.e.,  quasi-isomorphic to the 
map of rational cohomology algebras (in the category of augmented algebras).  In particular it implies that each space is formal. 

\begin{proof}
Let $Y_*$ be a space of finite type and let us show that $\Sigma Y_*$ is formal. The argument for 
a map between suspensions 
is similar. In case $Y_*$ 
is connected, its suspension $\Sigma Y_*$ is simply connected. It is also a co-$H$-space, therefore it is coformal and its Quillen model is a free Lie algebra generated by $\tilde H_*(Y_*)$ with zero differential. 
The Koszul dual commutative algebra is generated by $\tilde H^*(\Sigma Y_*)$ with all products of generators being zero. 

In case $Y_*=\coprod_{i=1}^k Y_i$ is a disjoint union of $k$ components, then 
$\Sigma Y_*=\left(\bigvee_{k-1}S^1\right)\vee\left(\bigvee_{i=1}^k\Sigma Y_i\right).
$
And the wedge of formal spaces is formal.
\end{proof}

Notice that from these two lemmas it follows that if $X_*$  is a suspension of finite type, then $C_*(X_*^\bullet)$ is a formal $\Gamma$-module and that the same is true for  a suspension of a map between spaces of finite type.  Proposition~\ref{p:formal_susp}  below implies that the finiteness condition can be released.

%

Let $\Omega$ be the category of finite sets with morphisms all surjective maps. In~\cite{pira00} 
Pirashvili defines an equivalence of categories
\[
cr\colon \Mod{-}\Gamma\to \Mod{-}\Omega.
\]
On objects
\begin{equation}\label{eq_cr}
cr\,\L_*(k)=\L_*(k_*)\Big/ +_{i=1}^k {\mathrm {Im}}\, r_i^*,
\end{equation}
where $r_i^*\colon \L_*(k_*\setminus\{i\})\to \L_*(k_*)$ is induced by the map
$r_i\colon k_*\to k_*\setminus \{i\}$:
\[
r_i(j)=
\begin{cases}
j,& j\neq i;\\
*,& j=i.
\end{cases}
\]

On morphisms $cr\, \L_* $ is obtained as restriction with respect to the inclusion $i\colon \Omega\to\Gamma$ that adds the basepoint to any set: $i(k)=k_*$. Recall~\eref{eq_NL}. The space $cr\,\L_*(k)$
is isomorphic to $N\L_*(k)$ via the obvious composition
\begin{equation}\label{eq:q}
q\colon N\L_*(k)\hookrightarrow\L_*(k_*)\to cr\,\L_*(k).
\end{equation}
One can show that $q$ is an isomorphism using the map $\prod_{i=1}^k(1-r_i^*s_i^*)$ that projects 
$\L_*(k_*)$ onto $N\L_*(k)$. (Notice that $r_i^*s_i^*$, $i=1\ldots k$, are pairwise commuting projectors
as well as $(1-r_i^*s_i^*)$, $i=1\ldots k$.) For the complexes that we consider below it is sometimes convenient to use $N\L_*(\bullet)$
instead of $cr\,\L_*(\bullet)$.

  Let us  describe the induced $\Omega$-module structure on $N\L_*(\bullet)$. The symmetric group action
  as part of $\Omega$ structure on $N\L_*(\bullet)$ is the usual one. Denote by $m_i\colon (k+1)\to
  k$ the surjection
  $$
  m_i(j)=\begin{cases}
  j,& 1\leq j\leq i;\\
  j-1,& i+1\leq j\leq k.
  \end{cases}
  $$
  Abusing notation we denote by $m_i\colon (k+1)_*\to k_*$ the same map extended as $m_i\colon *\mapsto *$. For $\gamma\in cr\,\L_*(k)$, one has
  \begin{equation}\label{eq_right_inf_bim0}
  q^{-1}(m_i^*(\gamma))=(1-r_i^*s_i^*-r_{i+1}^*s_{i+1}^*)m_i^* (q^{-1}(\gamma)).
  \end{equation}
  One can write this formula slightly differently. Recall that the structure of a right $\Omega$-module is equivalent to the structure of a right module over the commutative non-unital operad $\Com_+$, while the structure of a right $\Gamma$-module is equivalent to the structure of an infinitesimal bimodule over the commutative unital operad $\Com$, see~\cite[Proposition~4.9]{AroneTur2} or~\cite[Lemma~4.3]{Turchin1}. 
In this terms, equation~\eqref{eq_right_inf_bim0} is written as
\begin{multline}\label{eq_right_inf_bim}
q^{-1}\bigl(\gamma(x_1,\ldots,x_i\cdot x_{i+1},\ldots,x_{k+1})\bigr)=
q^{-1}(\gamma)(x_1,\ldots,x_i\cdot x_{i+1},\ldots,x_{k+1}) \\
-
x_i\cdot q^{-1}(\gamma)(x_1\ldots \hat x_i \ldots x_{k+1}) 
- x_{i+1}\cdot q^{-1}(\gamma)(x_1\ldots \hat x_{i+1} \ldots x_{k+1}).
\end{multline}
The two last summands in~\eqref{eq_right_inf_bim0} and~\eqref{eq_right_inf_bim} are correction terms necessary to make the right-hand side normalized.

The higher Hochschild homology over a pointed space $X_*$ is computed as the space of homotopy maps of $\Gamma$-modules
\[
HH^{X_*}(\L_*)=
H_*\bigl( \hRmod_\Gamma\left(C_*(X_*^\bullet),\L_*\right)\bigr).
\]

For any pointed space $X_*$, the cross-effect of the $\Gamma$-module $C_*(X_*^\bullet)$ is equivalent to 
\begin{equation}\label{eq:cross}
cr\, C_*(X_*^\bullet)\simeq \tilde C_*(X_*^{\wedge\bullet}),
\end{equation}
see~\cite{AroneTur1}, where the $\Omega$-module structure on $\tilde C_*(X_*^{\wedge\bullet})$ 
is induced by the diagonal maps. For any surjection $p\colon k\twoheadrightarrow \ell$, one gets a map $X_*^{\wedge\ell}\to X_*^{\wedge k}$ defined as 
\begin{equation}\label{eq:diagonal_map}
(x_1,\ldots x_\ell)\mapsto (x_{p^{-1}(1)},\ldots,x_{p^{-1}(k)}).
\end{equation}

It follows that the Hochschild-Pirashvili homology can also be described as
\[
HH^{X_*}(\L_*)=
H_*\left( \hRmod_\Omega\left(\tilde C_*(X_*^{\wedge\bullet}),cr\, \L_*\right)\right).
\]

\begin{defi}\label{d:omega_trivial}
We say that a right $\Omega$ module $M$ has a trivial $\Omega$ action if for any strict surjection 
$p\colon k\twoheadrightarrow \ell$ the induced
map  $M(\ell)\to M(k)$ is the zero map.
\end{defi}
%
%

\begin{prop}\label{p:formal_susp}
For any pointed suspension $\Sigma Y_*$, the $\Omega$ module $\tilde C_*\left((\Sigma Y_*)^{\wedge\bullet}\right)$ is formal. For any pointed map $g\colon Y_*\to Z_*$,
the induced map of $\Omega$ modules $(\Sigma g)_*\colon \tilde C_*\left((\Sigma Y_*)^{\wedge\bullet}\right)\to \tilde C_*\left((\Sigma Z_*)^{\wedge\bullet}\right)$ is also formal.
\end{prop}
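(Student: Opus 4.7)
The strategy is to reduce the statement to the already proven Lemmas~\ref{l:formality1} and~\ref{l:formality2} via Pirashvili's equivalence $cr \colon \Mod{-}\Gamma \xrightarrow{\sim} \Mod{-}\Omega$ recalled in \eqref{eq_cr}. Because $cr$ is exact it preserves quasi-isomorphisms, hence it sends formal right $\Gamma$-modules (resp.\ formal maps of right $\Gamma$-modules) to formal right $\Omega$-modules (resp.\ formal maps). Under the natural identification \eqref{eq:cross}, $cr\,C_*((\Sigma Y_*)^\bullet) \simeq \tilde C_*((\Sigma Y_*)^{\wedge\bullet})$, so it is enough to prove formality of the right $\Gamma$-module $C_*((\Sigma Y_*)^\bullet)$, and likewise formality of the induced $\Gamma$-module map for a pointed $g\colon Y_*\to Z_*$.

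In the finite-type case the argument then closes immediately: Lemma~\ref{l:formality2} asserts rational formality of the space $\Sigma Y_*$ and of the map $\Sigma g\colon \Sigma Y_*\to \Sigma Z_*$, and Lemma~\ref{l:formality1} upgrades both to formality of the corresponding $\Gamma$-module data. Applying $cr$ then delivers the full proposition under a finite-type hypothesis on $Y_*$ and $Z_*$.

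To remove the finite-type hypothesis, my plan is a filtered-colimit argument. Any pointed space $Y_*$ is a filtered colimit of its pointed finite CW subcomplexes $Y_*^\alpha$; smash powers commute with pointed filtered colimits, singular chains commute with filtered colimits of spaces, and homology commutes with filtered colimits of chain complexes, so the homology $\Omega$-module $\tilde H_*(\Sigma Y_*)^{\otimes\bullet}$ is likewise compatible with the colimit in each arity. Taking the filtered colimit over $\alpha$ of the formality zig-zag supplied by the finite-type case should produce a zig-zag of $\Omega$-module quasi-isomorphisms realizing formality for $Y_*$. A pointed map $g\colon Y_*\to Z_*$ is similarly written as a filtered colimit of maps between pointed finite CW pairs, and the argument extends by functoriality.

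The main technical obstacle I expect is making the formality zig-zag of Lemma~\ref{l:formality1} genuinely \emph{natural} in the input space, so that the colimit over $\alpha$ can be formed coherently inside the category of $\Omega$-modules. Inspection of the proof of Lemma~\ref{l:formality1} suggests that each of the constituent quasi-isomorphisms $(A_{X_*^\bullet})^\vee\simeq(\Q\otimes A_{X_*}^{\otimes\bullet})^\vee\simeq(\Q\otimes H^*(X_*)^{\otimes\bullet})^\vee$ is natural in $X_*$ on the category of pointed, simply connected, finite-type spaces, so the zig-zag assembles into a natural transformation that survives the filtered colimit. A more robust alternative, which I would fall back on for non-simply-connected $Y_*$ where Sullivan models behave worse, is to realize formality as cofibrant replacement in a projective model structure on $\Omega$-diagrams of chain complexes, where compatibility with filtered colimits is automatic.
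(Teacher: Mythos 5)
Your reduction via the cross-effect equivalence is fine (the idempotent $\prod_i(1-r_i^*s_i^*)$ shows $cr$ is exact, so it preserves formality data), and in the finite-type case the proposition does follow immediately from Lemmas~\ref{l:formality1} and~\ref{l:formality2}; the paper says as much right after Lemma~\ref{l:formality2}. But the entire content of Proposition~\ref{p:formal_susp} is the removal of the finite-type hypothesis, and that is exactly where your argument has a gap. A filtered colimit of formal objects is not formal unless the formality zig-zags can be chosen \emph{compatibly} with all the transition maps of the diagram, and the zig-zag produced by Lemma~\ref{l:formality1} cannot be made natural: the step $\left(\Q\otimes A_{X_*}^{\otimes\bullet}\right)^\vee\simeq\left(\Q\otimes H^*(X_*)^{\otimes\bullet}\right)^\vee$ uses a chosen formality quasi-isomorphism $A_{X_*}\simeq H^*(X_*)$, which is a non-canonical choice made separately for each space. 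Knowing that each subcomplex $\Sigma Y_*^\alpha$ and each inclusion $\Sigma Y_*^\alpha\hookrightarrow\Sigma Y_*^\beta$ is formal does not give a coherent system of such choices over the whole index category, so the colimit of the zig-zags cannot be formed. Your fallback of ``realizing formality as cofibrant replacement in a projective model structure'' does not address this either: formality is the existence of a zig-zag to the homology, not a fibrancy/cofibrancy condition, so there is nothing for the colimit-compatibility of cofibrant replacement to buy you.

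The paper avoids the issue by a different decomposition. Write $\Sigma Y_*=S^1\wedge Y_*$ and use the K\"unneth isomorphism of $\Omega$-modules $\tilde C_*\left((\Sigma Y_*)^{\wedge\bullet}\right)\simeq\tilde C_*\left((S^1)^{\wedge\bullet}\right)\otimes\tilde C_*\left(Y_*^{\wedge\bullet}\right)$. The only place where Lemmas~\ref{l:formality1}--\ref{l:formality2} are invoked is for the fixed finite complex $S^1$, giving $\tilde C_*\left((S^1)^{\wedge\bullet}\right)\simeq\tilde H_*\left((S^1)^{\wedge\bullet}\right)$; since the latter has trivial $\Omega$-action (diagonals $S^\ell\to S^k$, $k>\ell$, kill reduced homology), the remaining factor only matters through its arity-wise chain complexes, so one may replace $\tilde C_*\left(Y_*^{\wedge\bullet}\right)$ by $\tilde C_*(Y_*)^{\otimes\bullet}$ and then by $\tilde H_*(Y_*)^{\otimes\bullet}$, using only the elementary fact that over $\Q$ every chain complex and every chain map is formal. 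This is the zig-zag~\eqref{eq:formality_arone}; it needs no finiteness, and every arrow except the last is natural in $Y_*$, which is what makes the statement about maps go through. If you want to salvage your approach you would have to prove a genuine naturality statement for the formality of suspensions, which is essentially as hard as the proposition itself; I recommend the smash-factorization route instead.
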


\begin{proof}

For the proof we will need that the $\Omega$ module $ \tilde C_*((S^1)^{\wedge\bullet}) $ is formal and has the trivial $\Omega$ action in homology.
 The first statement follows from the fact that the $\Gamma$ module
$C_*((S^1)^\bullet)$ is formal (by Lemmas~\ref{l:formality1} and~\ref{l:formality2})
and thus is so its cross-effect $cr\, C_*((S^1)^\bullet) \simeq \tilde C_*((S^1)^{\wedge\bullet}) $. 
The second statement is straightforward as any diagonal map $S^\ell\to S^k$ for $k>\ell$ induces 
the zero map in reduced homology.

The following sequence of quasi-isomorphisms of $\Omega$ modules proves the formality of $\tilde C_*\left((\Sigma Y_*)^{\wedge\bullet}\right)$:\footnote{This simple argument  was provided to us by G.~Arone.}
\begin{multline}\label{eq:formality_arone}
\tilde C_*\left((\Sigma Y_*)^{\wedge\bullet}\right)\simeq \tilde C_*\left((S^1)^{\wedge\bullet}\right)\otimes \tilde C_* \left( Y_*^{\wedge\bullet}\right)\simeq
 \tilde H_*\left((S^1)^{\wedge\bullet}\right)\otimes \tilde C_* \left( Y_*^{\wedge\bullet}\right)\simeq \\
  \simeq\tilde H_*\left((S^1)^{\wedge\bullet}\right)\otimes \tilde C_*( Y_*)^{\otimes\bullet}\simeq
     \tilde H_*\left((S^1)^{\wedge\bullet}\right)\otimes \tilde H_*( Y_*)^{\otimes\bullet}.
\end{multline}
   By the tensor product above we understand an objectwise tensor product of right $\Omega$ modules.
   The second quasi-isomorphism uses the formality of $ \tilde C_*((S^1)^{\wedge\bullet}) $. 
 Notice that all the terms in this zigzag starting from the third one have the trivial $\Omega$ action. Notice also that all the quasi-isomorphisms are functorial in $Y_*$ except the last one, which uses a choice of a quasi-isomorphism $\tilde H_*Y_* \to \tilde C_*Y_*$. On the other hand,  any  morphism of complexes (in our case  $\tilde C_*Y_*\to \tilde C_*Z_*$) is formal (i.e., is quasi-isomorphic to the induced map $\tilde H_*Y\to\tilde H_*Z$).
    This proves the formality of the induced map of $\Omega$ modules.
%

\end{proof}

\begin{rem}\label{r:triv_susp}
It follows from~\eqref{eq:formality_arone} that for any suspension $\Sigma Y_*$,  the
right $\Omega$ module $\tilde C_*\left((\Sigma Y_*)^{\wedge\bullet}\right)$ has the trivial 
$\Omega$ action in homology.
\end{rem}

This property is in fact the second of the two reasons for the Hodge splitting. (The first one is the formality.) Indeed,   
%
as a consequence, the $\Omega$-module $\tilde H_*\left((\Sigma Y_*)^{\wedge \bullet}\right)$ splits into a direct sum of $\Omega$-modules:
\begin{equation}\label{eq_suspens_split}
cr\, \tilde C_*\left((\Sigma Y_*)^{\wedge \bullet}\right) \simeq 
\tilde H_*\left((\Sigma Y_*)^{\wedge \bullet}\right) \simeq
\bigoplus_{m\geq 0}
\tilde H_*(\Sigma Y_*)^{\otimes m},
\end{equation}
where $\tilde H_*(\Sigma Y_*)^{\otimes m}$ denotes the $\Omega$-module which is 
$\tilde H_*(\Sigma Y_*)^{\otimes m}$ in arity $m$ and 0 in all others. Thus we get
\begin{equation}\label{eq_HH_susp}
HH^{\Sigma Y_*}(\L_*)\simeq \prod_{m\geq 0} H\left( \hRmod_\Omega
\left(\tilde H_*(\Sigma Y_*)^{\otimes m}, cr\,\L_*\right)\right).
\end{equation}

As a corollary we see that the functor $HH^{(-)}(\L_*)$ factors through
the reduced homology functor $\tilde H_*\colon \TOP_*\to gVect$ when restricted on $\Sigma(Top_*)$. 
The splitting by $m$ in~\eqref{eq_HH_susp} is exactly the Hodge splitting.

Now we want to make more explicit the right-hand side of~\eqref{eq_HH_susp}. Recall that the right
$\Omega$-module is the same as the right $\Com_+$-module. Applying the Koszul duality between the $\Lie$ and $\Com_+$ operads, the cofibrant replacement of $\tilde H_*(\Sigma Y_*)^{\otimes m}$
as a right $\Com_+$-module is $\tilde H_*(\Sigma Y_*)^{\otimes m}\circ\coLie\{1\}\circ\Com_+$, where $\circ$ is the composition product of symmetric sequences; $\coLie$ is the Lie cooperad; $\{1\}$ denotes 
operadic suspension~\cite{Fresse1,AroneTur2,SongTur}. The differential in it is obtained by taking off one cobracket 
from the $\coLie\{1\}$ factor and by making it act from the left on the $\Com_+$ part as a product $x_1\cdot x_2$, see~\cite[Section~5]{AroneTur2}. For a general right $\Com_+$-module $M$, there is another term of the differential on its cofibrant replacement $M\circ\coLie\{1\}\circ\Com_+$, which takes off one cobracket from the $\coLie\{1\}$ part and makes it act from the right on $M$ also as a product $x_1\cdot x_2$. But in our 
case this action is trivial, so only the first part of the differential is present.  The product over $m\geq 0$
of the complexes below computes $HH^{\Sigma Y_*}(\L_*)$:
\begin{multline}\label{eq:grCH_m1}
\Rmod_{\Com_+}\left(\tilde H_*(\Sigma Y_*)^{\otimes m}\circ\coLie\{1\}\circ\Com_+, cr\,\L_*\right)=
\left(\Hom_\bbS\Bigl(\tilde H_*(\Sigma Y_*)^{\otimes m}\circ\coLie\{1\},  cr\,\L_*\Bigr),d\right)=
\\
\Hom_{\bbS_m}\left( \tilde H_*(Y_*)^{\otimes m},\left(\prod_{\ell\geq m}\,\,\bigoplus_{\ell_1+\ldots +\ell_m=\ell}
\left(\Lie(\ell_1)\otimes\ldots\otimes\Lie(\ell_m)
\otimes_{\bbS_{\ell_1}\times\ldots\times \bbS_{\ell_m}}\Bigl(sign\otimes cr\,\L_*(\ell)\Bigr)\right)[\ell], d\right)\right),
\end{multline}
which assuming the finiteness condition on the homology of $Y_*$  can also be
written  as
\begin{equation}\label{eq:grCH_m2}
\tilde H^*(Y_*)^{\otimes m}\hat\otimes_{\bbS_m}
\left(\prod_{\ell\geq m}\,\,\bigoplus_{\ell_1+\ldots +\ell_m=\ell}
\left(\Lie(\ell_1)\otimes\ldots\otimes\Lie(\ell_m)
\otimes_{\bbS_{\ell_1}\times\ldots\times \bbS_{\ell_m}}\Bigl(sign\otimes cr\,\L_*(\ell)\Bigr)\right)[\ell]
,d\right).
\end{equation}
Here $sign$ denotes the sign representation of $\bbS_\ell$; the reduced cohomology of $Y_*$ is viewed as a negatively graded vector space.
The differential in this complex is the sum of simultaneous insertions of $[x_1,x_2]$ in one of the inputs of 
$\Lie(\ell_i)$ for some $i$, and right action by $x_1\cdot x_2$ on the corresponding input of $cr\,\L_*(\ell)$.
Beware that if we replace $cr\,\L_*(\ell)$ by $N\L_*(\ell)$ additional summands in the differential appear due 
to the last two terms in~\eqref{eq_right_inf_bim0}-\eqref{eq_right_inf_bim}. 

\begin{rem}\label{r:grCH_C_M}
In case $Y_*$ is of finite type, and $\L_*=M\otimes C^{\otimes\bullet}$,
the obtained complex computing $HH^{\Sigma Y_*}(C,M)$ is 
\begin{equation}\label{eq:grCH_C_M}
M\hat\otimes S\left(\tilde H^*(Y_*)\hat\otimes{\mathcal L}(C)\right),
\end{equation}
where the cohomology $\tilde H^*(Y_*)$ is non-positively graded; ${\mathcal L}(C)$ is the Harrison complex
of $C$. The symmetric power and tensor products are the completed ones. 
 The differential
\[
d=d_M+d_C+\delta,
\]
where $d_M$ and $d_C$ are induced by the differential on $M$ and ${\mathcal L}(C)$, and 
$\delta(m\otimes x)=m'\otimes [m'',x]$. The part $\delta$ in the differential appears due to the last two summands in~\eqref{eq_right_inf_bim0}-\eqref{eq_right_inf_bim}.\footnote{To recall $C$ is simply connected. If $C$ is not simply connected,  the Harrison complex  ${\mathcal L}(C)$ should be replaced 
by the completed Harrison complex $\hat {\mathcal L}(C)$.}

\end{rem}

\begin{rem}\label{r:grCH_wedge_n}
For $Y_*=\vee_n S^0$ and any $\L_*$, the obtained complex is identical to 
$\gr\, CH_*^{\vee_n S^1}(\L_*)$ considered in Section~\ref{s:CH_vee_n}.
In case $\L_*=M\otimes C^{\otimes\bullet}$ it follows from Proposition~\ref{p:out_hodge_filtr} and Remark~\ref{r:grCH_C_M}.
For a general $\L_*$ one can construct this isomorphism analogously. The idea is that elements of $\Lie(\ell_i)$ in~\eqref{eq:grCH_m2} should be viewed as linear combinations of permutations in $\bbS_{\ell_i}$,
which tells us in which order the elements should be put on the corresponding circle.
\end{rem}

\subsection{Hodge filtration. Proof of Theorem~\ref{t:HP_suspensions1}}\label{ss42}
We define a functorial filtration on the space of homotopy maps of right $\Omega$-modules, which induces
the desired filtration on $HH^{X_*}(\L_*)$ functorial in $X_*$ and $\L_*$.  For a right $\Omega$-module
$K$ define its $m$-th truncation $tr_m K$ as 
\[
tr_m(K)(\ell) =
\begin{cases}
K(\ell),& \ell\leq m;\\
0,& \ell>m.
\end{cases}
\]
This symmetric sequence has an obvious $\Omega$-module structure, such that the projection
$K\to tr_m K$ is an $\Omega$-modules map. This morphism for any $\Omega$-module $L$ induces a map of complexes
\[
\hRmod_{\Omega}(tr_m K,L)\to \hRmod_{\Omega}( K,L).
\]
Its image in homology is what we call the $m$-th term of the Hodge filtration in
$H\left(\hRmod_{\Omega}( K,L\right))$.

For $K=\tilde C_*\left( (\Sigma Y_*)^{\wedge\bullet}\right)\simeq \tilde H_*(\Sigma Y_*)^{\otimes\bullet}$,
the cofiltration $tr_\bullet$ splits. For any pointed map of suspensions $\Sigma Y_*\to \Sigma Z_*$,
the induced map 
\[
\gr\, HH^{\Sigma Z_*}(\L_*)\to \gr\, HH^{\Sigma Y_*}(\L_*)
\]
can be recovered from the map of the layers of $tr_\bullet$ (and thus from the map
in homology $\tilde H_*\Sigma Y_*\to \tilde H_*\Sigma Z_*$) by the   spectral sequence argument.

\subsection{Hodge filtration versus cardinality cofiltration}\label{ss42+}
Denote by $\CHH^{X_*}(\L_*)$ the higher Hochschild complex 
$$
\CHH^{X_*}(\L_*):=\hRmod_\Omega \left(  \tilde C_* \left( X_*^{\wedge\bullet}\right), cr\,\L_*\right).
$$
The Hodge filtration
\[
F_0 \CHH^{X_*}(\L_*) \to F_1 \CHH^{X_*}(\L_*) \to F_2 \CHH^{X_*}(\L_*) \to \ldots
\]
should not be confused with the more widely used cardinality or rank cofiltration (depending on the context 
it can also be called Goodwillie-Weiss tower)~\cite{AyFr,IntJMc,WeissEmb}:
\[
T_0 \CHH^{X_*}(\L_*) \leftarrow T_1 \CHH^{X_*}(\L_*) \leftarrow T_2 \CHH^{X_*}(\L_*) \leftarrow \ldots .
\]
We have seen in the previous subsection that
\[
F_m \CHH^{X_*}(\L_*) \simeq \hRmod_\Omega \left( tr_m \tilde C_* \left( X_*^{\wedge\bullet}\right),cr\,\L_*\right).
\]

\begin{prop}\label{p:cardinal}
The $n$-th term of the cardinality cofiltration is
\[
T_m \CHH^{X_*}(\L_*) \simeq \hRmod_\Omega \left(  \tilde C_* \left( X_*^{\wedge\bullet}\right),tr_m cr\,\L_*\right).
\]
\end{prop}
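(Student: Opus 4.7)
The plan is to identify both sides via a direct unraveling of the cardinality cofiltration into the $\Omega$-module framework. First, I would recall from \cite{AyFr, IntJMc, WeissEmb} that $T_m \CHH^{X_*}(\L_*)$ is the $m$-th stage of the polynomial approximation tower for the coefficient cofunctor on finite sets: concretely, it is the best approximation that depends only on the values of $cr\,\L_*$ in arities at most $m$. Translated into the right $\Omega$-module formalism, this polynomial truncation is precisely the arity truncation $tr_m cr\,\L_*$, viewed as the quotient $cr\,\L_* \twoheadrightarrow tr_m cr\,\L_*$ in the category of right $\Omega$-modules (all structure maps targeting arities above $m$ become zero).

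Next, I would construct the comparison map. The projection $cr\,\L_* \twoheadrightarrow tr_m cr\,\L_*$ is a morphism of right $\Omega$-modules and hence induces
\[
\CHH^{X_*}(\L_*) = \hRmod_\Omega\bigl(\tilde C_*(X_*^{\wedge\bullet}), cr\,\L_*\bigr) \longrightarrow \hRmod_\Omega\bigl(\tilde C_*(X_*^{\wedge\bullet}), tr_m cr\,\L_*\bigr).
\]
I would then verify, via the universal property of polynomial approximation, that this map is naturally equivalent to the projection $\CHH^{X_*}(\L_*) \to T_m \CHH^{X_*}(\L_*)$: any homotopy natural transformation from $\tilde C_*(X_*^{\wedge\bullet})$ into a right $\Omega$-module supported in arities $\leq m$ must factor through the quotient, making the target on the right-hand side the terminal such receptacle among approximations to $cr\,\L_*$.

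The main obstacle is making the dictionary between polynomial approximations in functor calculus and arity truncations on right $\Omega$-modules precise enough that the universal properties match on the nose, in particular handling the homotopical aspects (replacing the source by a cofibrant resolution or the target by a fibrant one). Once this identification is made, exactness of $tr_m$ in each arity ensures that the derived Hom complex commutes with the truncation up to natural quasi-isomorphism, and the proposition follows formally.
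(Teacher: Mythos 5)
Your setup is right --- the comparison map is the one induced by the projection $cr\,\L_* \twoheadrightarrow tr_m\, cr\,\L_*$ --- but the step you defer as ``the main obstacle'' is in fact the entire content of the proposition, and the universal-property route as you state it does not close on its own. Two specific issues. First, the definition of $T_m$ being used here is not ``the best approximation depending only on the values of $cr\,\L_*$ in arities at most $m$''; it is the derived mapping space over the \emph{truncated category}, $T_m \CHH^{X_*}(\L_*) \simeq \hRmod_{\Gamma_m}\left( C_*(X_*^\bullet)|_{\leq m}, \L_*|_{\leq m}\right)$, where $\Gamma_m$ consists of pointed sets of cardinality $\leq m+1$ --- i.e.\ \emph{both} the source and the target are restricted. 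The claim that this can be traded entirely onto an arity truncation of the target is essentially the statement to be proved, so taking your description as the starting point is circular. Second, the ``terminal receptacle'' observation only shows that strict $\Omega$-module maps into a module supported in arities $\leq m$ factor through $tr_m\, cr\,\L_*$; it does not identify the \emph{derived} mapping space $\hRmod_\Omega\left(\tilde C_*(X_*^{\wedge\bullet}), tr_m\, cr\,\L_*\right)$ with the derived mapping space over $\Omega_m$, because the derived hom involves a resolution of the source over all of $\Omega$, and objectwise exactness of $tr_m$ says nothing about that.

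The missing ingredient is an adjunction. Writing $(-)|_{\leq m}\colon \Mod{-}\Omega \to \Mod{-}\Omega_m$ for restriction and $triv_m$ for extension by zero, one has a Quillen adjunction
\[
(-)|_{\leq m}\colon \Mod{-}\Omega \rightleftarrows \Mod{-}\Omega_m \colon triv_m,
\qquad triv_m \circ (-)|_{\leq m} = tr_m,
\]
and the cross-effect equivalence $\Mod{-}\Gamma_m \simeq \Mod{-}\Omega_m$ persists for the truncated categories. Combining these,
\[
T_m \CHH^{X_*}(\L_*) \simeq \hRmod_{\Omega_m}\left(\tilde C_*\left(X_*^{\wedge\bullet}\right)|_{\leq m},\, cr\,\L_*|_{\leq m}\right) \simeq \hRmod_\Omega\left(\tilde C_*\left(X_*^{\wedge\bullet}\right),\, tr_m\, cr\,\L_*\right),
\]
which is the proposition. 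So your intuition that the result ``follows formally'' is correct, but the formal mechanism is this derived adjunction rather than a universal property of polynomial approximation; you would need to supply it to complete the argument.
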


\begin{proof}
Denote by $\Gamma_m$ and $\Omega_m$ the full subcategories of $\Gamma$, respectively $\Omega$, 
consisting of objects of cardinal $\leq m+1$, respectively $\leq m$. One has obvious restriction functors
\[
(-)|_{\leq m}\colon \Mod{-}\Gamma\to \Mod{-}\Gamma_m;\qquad
(-)|_{\leq m}\colon \Mod{-}\Omega\to \Mod{-}\Omega_m.
\]

By definition
\begin{equation}\label{eq:cardinal}
T_m \CHH^{X_*}(\L_*) \simeq \hRmod_{\Gamma_m} \left(  C_* \left( X_*^{\bullet}\right)|_{\leq m}, \L_*|_{\leq m}\right).
\end{equation}

The cross-effect functor
\[
cr\colon\Mod{-}\Gamma_m\to \Mod{-}\Omega_m
\]
defined by~\eqref{eq_cr} is also an equivalence in the truncated case.

For a right $\Omega_m$ module $K$, denote by $triv_m (K)$ the $\Omega$ module extended trivially on sets of cardinal $>m$:
\[
triv_m(K)(\ell) =
\begin{cases}
K(\ell),& \ell\leq m;\\
0,& \ell>m.
\end{cases}
\]
One has a Quillen adjunction
\[
(-)|_{\leq m}\colon\Mod{-}\Omega\rightleftarrows \Mod{-}\Omega_m\colon triv_m.
\]
Notice that $triv_m\circ (-)_{\leq m} = tr_m$. As a consequence we get
\[
T_m \CHH^{X_*}(\L_*) \simeq \hRmod_{\Omega_m} \left(  \tilde C_* \left( X_*^{\wedge\bullet}\right)|_{\leq m}, cr\,\L_*|_{\leq m}\right)\simeq \hRmod_\Omega \left(  \tilde C_* \left( X_*^{\wedge\bullet}\right), tr_m cr\,\L_*\right).
\]
\end{proof}

Finally, let us compare the $T_m$ and $F_m$ terms in case of a suspension to make sure that they are different.
\begin{gather*}
F_m \CH^{\Sigma Y_*}(\L_*) =
\prod_{i=0}^m\hRmod_\Omega\left( \tilde H_*(\Sigma Y_*)^{\otimes i},cr\, \L_*\right)=
\prod_{i=0}^m\left(\prod_{j=i}^{+\infty} \Hom_{\bbS_j}\left( ( \tilde H_*(\Sigma Y_*)^{\otimes i}\circ
\coLie\{1\} )(j), cr\, \L_* (j) \right), d\right);\\
T_m \CH^{\Sigma Y_*}(\L_*) =
\prod_{i=0}^{+\infty}\hRmod_\Omega\left( \tilde H_*(\Sigma Y_*)^{\otimes i}, tr_m cr\, \L_*\right)=
\prod_{i=0}^m\left(\prod_{j=i}^{m} \Hom_{\bbS_j}\left( ( \tilde H_*(\Sigma Y_*)^{\otimes i}\circ
\coLie\{1\} )(j), cr\, \L_* (j) \right), d\right).
\end{gather*}
One can see that the terms $F_m$ and $T_m$ are not the same.

\begin{rem}\label{r:cardin}
The cardinality cofiltration induces a decreasing filtration in $\CH^{\Sigma Y_*}(\L_*)$: we define
$F^m \CH^{\Sigma Y_*}(\L_*)$ as the kernel of the projection $p_m\colon\CH^{\Sigma Y_*}(\L_*)\to T_m \CH^{\Sigma Y_*}(\L_*)$. Notice that $p_m$ restricted on $F_m \CH^{\Sigma Y_*}(\L_*)$ is still surjective.
As a consequence, one has that the Hodge filtration in the Hochschild-Pirashvili homology on a suspension  is dense in the topology induced by this decreasing filtration. 
\end{rem}

\begin{rem}\label{r:cardin2}
The cardinality cofiltration in the higher Hochschild homology on suspensions, contrary to the Hodge filtration, does not split in general.
\end{rem}

\section{Coformality of $C_*\left( (\Sigma Y_*)^{\wedge\bullet} \right)$. Proof of Theorem~\ref{t:HP_suspensions2}}\label{ss43}
We need to recall some theory of right modules over $\Com_+$~\cite{Fresse1}. As we briefly explained in 
Subsection~\ref{ss41}, a functorial cofibrant replacement of a right $\Omega$-module or
equivalently a right $\Com_+$-module $M$
is $M\circ\coLie\{1\}\circ\Com_+$.  The sequence $M\circ\coLie\{1\}$ is the {\it Koszul dual} of $M$. Notice that it is naturally a right $\coLie\{1\}$-comodule. Given any other right  $\coLie\{1\}$-comodule $N$, one can get a $\Com_+$-module $N\circ\Com_+$.\footnote{The differential in $N\circ\Com_+$ is the sum of two terms: the first one being induced by the differential on $N$, the second splits off one cobracket from $N$
and makes it act from the left as a product on $\Com_+$.}  It is easy to see that $N\circ \Com_+$ is quasi-isomorphic to $M$ (as a $\Com_+$-module) if and only if $N$ is quasi-isomorphic to $M\circ\coLie\{1\}$
(as a $\coLie\{1\}$-comodule). If this happens we say that $N$ is a Koszul dual of $M$ and $M$ is a Koszul dual of $N$. 

This is part of a general homotopy theory of right modules~\cite{Fresse1}. For any right module $M$
over any doubly reduced operad ${\mathcal O}$ in chain complexes (${\mathcal O}(0)=0$, ${\mathcal O}(1)=\Q$), the bar construction $B(M,{\mathcal O},I)$ is a right comodule over the cooperad $B(I,{\mathcal O},I)$. By $I$ we mean the unit object in symmetric sequences 
$$
I(k)=
\begin{cases}
\Q,& k=1;\\
0,& k\neq 1.
\end{cases}
$$
 In our case the operad ${\mathcal O}=\Com_+$ is Koszul and the bar complexes can be replaced by equivalent Koszul complexes~\cite{Fresse1}.

 It was shown by~\cite[Lemma~11.4]{AroneTur1}, that for any pointed space $X_*$, the Koszul dual of 
$\tilde C_*(X_*^{\wedge\bullet})$ is 
$\tilde C_*(X_*^{\wedge\bullet}/\Delta^\bullet X_*)$, where by $\Delta^n X_*$
we understand the fat diagonal in $X_*^{\wedge n}$. On homology the $\coLie\{1\}$ coaction
\[
\circ_{i\sim j}\colon\tilde H_*(X_*^{\wedge n}/\Delta^n X_*)\to \tilde H_{*-1}(X_*^{\wedge n-1}/\Delta^{n-1} X_*)
\otimes \coLie\{1\}(2)
\]
is induced by the connecting homomorphisms 
$\partial\colon H_*(X_*^{\wedge n},\Delta^n X_*)\to H_{*-1}(\Delta^n X_*, \Delta_{ij}^n X_*)$
  of the long exact sequence for the triples
\[
(X_*^{\wedge n},\Delta^n X_*,\Delta_{ij}^n X_*),
\]
where $\Delta_{ij}^n X_*$ is the union of all diagonals except one: $x_i=x_j$. (One obviously has
$\Delta^n X_*/\Delta_{ij}^n X_*\cong X_*^{\wedge n-1}/\Delta^{n-1} X_*$.)

\begin{defi}\label{d:coformal}
We say that a right $\Com_+$-module is coformal if its Koszul dual $\coLie\{1\}$-comodule is formal.
A map of right $\Com_+$-modules is said coformal if the induced morphism of their Koszul duals is formal.
\end{defi}

\begin{prop}\label{p:coformal_susp}
\sloppy
For any pointed suspension $\Sigma Y_*$, the right $\Com_+$-module $\tilde C_*\left((\Sigma Y_*)^{\wedge\bullet}\right)$ is coformal.
For any pointed map of suspensions $f\colon\Sigma Y_*\to\Sigma Z_*$, the induced map of
$\Com_+$-modules $f_*\colon \tilde C_*\left((\Sigma Y_*)^{\wedge\bullet}\right)\to \tilde C_*\left((\Sigma Z_*)^{\wedge\bullet}\right)$  is coformal.  
\end{prop}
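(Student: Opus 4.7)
The plan is to deduce coformality of $\tilde C_*((\Sigma Y_*)^{\wedge\bullet})$ from the $\Omega$-module formality already established in Proposition~\ref{p:formal_susp}, by applying Koszul duality between right $\Com_+$-modules and right $\coLie\{1\}$-comodules. Recall from~\cite{Fresse1} that the Koszul dual of a right $\Com_+$-module $M$ is functorially computed as $M\circ\coLie\{1\}$ equipped with a differential built from the right $\Com_+$-action on $M$, and that over $\Q$ this functor preserves quasi-isomorphisms of $\Com_+$-modules (all symmetric sequences are $\Sigma$-projective in characteristic zero). By~\cite[Lemma~11.4]{AroneTur1} cited above, the Koszul dual of the $\Com_+$-module $\tilde C_*(X_*^{\wedge\bullet})$ is precisely $\tilde C_*(X_*^{\wedge\bullet}/\Delta^\bullet X_*)$.

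First I would apply the Koszul duality functor to the zigzag of quasi-isomorphisms of right $\Omega$-modules supplied by Proposition~\ref{p:formal_susp}. This yields a zigzag of quasi-isomorphisms of right $\coLie\{1\}$-comodules connecting $\tilde C_*((\Sigma Y_*)^{\wedge\bullet}/\Delta^\bullet \Sigma Y_*)$ to the Koszul dual of $H_*:=\tilde H_*((\Sigma Y_*)^{\wedge\bullet})\cong\bigoplus_{m\geq 0} \tilde H_*(\Sigma Y_*)^{\otimes m}$. By Remark~\ref{r:triv_susp} the $\Omega$-action, hence equivalently the right $\Com_+$-action, on $H_*$ is trivial. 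Since the Koszul differential on $H_*\circ\coLie\{1\}$ is built entirely from that action (and the internal differentials on $H_*$ and $\coLie\{1\}$ both vanish), it disappears identically, so this $\coLie\{1\}$-comodule coincides with its own homology and is tautologically formal. Combined with the zigzag, this proves coformality of $\tilde C_*((\Sigma Y_*)^{\wedge\bullet})$.

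For a pointed map $f\colon \Sigma Y_*\to\Sigma Z_*$, I would apply the same strategy to a formality zigzag that is natural in $f$. The only non-functorial step in the proof of Proposition~\ref{p:formal_susp} is the final identification $\tilde C_*(Y_*)^{\otimes\bullet}\simeq \tilde H_*(Y_*)^{\otimes\bullet}$, which requires a choice of quasi-isomorphism $\tilde H_*Y_*\to \tilde C_*Y_*$; as in that proof, this is patched by the general principle that any chain map is formal. The hard part will be pinning down this naturality precisely at the level of $\coLie\{1\}$-comodules, i.e.\ checking that applying the Koszul duality functor to the natural zigzag associated with $f_*$ yields, up to quasi-isomorphism of $\coLie\{1\}$-comodule maps, exactly the map induced on $H_*\circ\coLie\{1\}$ by $f_*\colon\tilde H_*(\Sigma Y_*)\to\tilde H_*(\Sigma Z_*)$. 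Once this identification is secured, coformality of $f_*$ is automatic since both the source and target of the induced map carry zero differential.
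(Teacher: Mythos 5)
Your argument for the first assertion (coformality of the object $\tilde C_*\left((\Sigma Y_*)^{\wedge\bullet}\right)$) is correct and is essentially the paper's: apply the Koszul duality functor to the formality zigzag of Proposition~\ref{p:formal_susp} and observe that the Koszul dual of an $\Omega$-module with trivial action and zero differential carries no Koszul differential, hence is its own homology.

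The map case, however, has a genuine gap. Your strategy requires a formality zigzag for the $\Com_+$-module map that is natural in an \emph{arbitrary} pointed map $f\colon\Sigma Y_*\to\Sigma Z_*$, but Proposition~\ref{p:formal_susp} and the zigzag~\eqref{eq:formality_arone} are functorial only in $Y_*$, i.e.\ they treat maps of the form $\Sigma g$ for $g\colon Y_*\to Z_*$. For a general $f$ that is not a suspension, the induced map of $\Com_+$-modules is \emph{not} formal: if it were, it would be quasi-isomorphic to $f_*^{\otimes\bullet}$, which preserves the arity splitting $\bigoplus_m\tilde H_*(\Sigma Y_*)^{\otimes m}$, and the Hodge splitting of $HH$ would then be preserved by every $f$ --- contradicting the whole point of the paper (e.g.\ the action of $E_{12}$ in Subsection~\ref{sec:lowest}). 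The identification you defer as "the hard part" is in fact false in general: for the Hopf map $S^3\to S^2$ one has $f_*=0$ on reduced homology, so the cofree comodule map cogenerated by $f_*$ vanishes, whereas the actual map of Koszul duals is nonzero (it is governed by $y\mapsto \tfrac12[x,x]$, cf.\ Section~\ref{s:rht_map}). The paper avoids this entirely: it uses only formality of the two \emph{objects}, which identifies both Koszul duals with the cofree comodules $\tilde H_*(\Sigma Y_*)^{\otimes\bullet}\circ\coLie\{1\}$ and $\tilde H_*(\Sigma Z_*)^{\otimes\bullet}\circ\coLie\{1\}$, and then invokes the observation that \emph{any} map between right $\coLie\{1\}$-comodules whose homology is cofree is automatically formal --- no naturality of the zigzag in $f$ is needed, and the induced map on homology of the Koszul duals is allowed to be (and generally is) something other than the cofree extension of $f_*$. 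You would need to replace your naturality step by this cofreeness lemma (or an equivalent lifting argument) for the proof to go through.
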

%

\begin{proof} 
According to Proposition~\ref{p:formal_susp} both $\Com_+$-modules $\tilde C_*\left((\Sigma Y_*)^{\wedge\bullet}\right)$ and $\tilde C_*\left((\Sigma Z_*)^{\wedge\bullet}\right)$ are formal. Their Koszul duals
are $\tilde H_*(\Sigma Y_*)^{\otimes \bullet}\circ \coLie\{1\}$ and $\tilde H_*(\Sigma Z_*)^{\otimes \bullet}\circ \coLie\{1\}$, see Subsection~\ref{ss41}, which are formal and cofree. On the other hand 
it is easy to see that any map between right $\coLie\{1\}$-comodules whose homology is cofree, is formal.
\end{proof}

\begin{cor}\label{cor:conf_susp}
One has a natural isomorphism of right $\coLie\{1\}$-comodules 
\begin{equation}\label{eq_PBW_general}
\tilde H_* \left((\Sigma Y_*)^{\wedge\bullet}/\Delta^\bullet Y_*\right){\stackrel \simeq\longrightarrow} \tilde H_*(\Sigma Y_*)^{\otimes\bullet}\circ\coLie\{1\},
\end{equation}
functorial over the category $\Sigma(\TOP_*)$.
\end{cor}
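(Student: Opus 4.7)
The plan is to derive the corollary as a direct consequence of Propositions~\ref{p:formal_susp} and~\ref{p:coformal_susp}, passing information back and forth through the Koszul duality between right $\Com_+$-modules and right $\coLie\{1\}$-comodules. First I would invoke the Arone--Turchin identification recalled before Definition~\ref{d:coformal}: the Koszul dual of the $\Com_+$-module $\tilde C_*\left((\Sigma Y_*)^{\wedge\bullet}\right)$ is $\tilde C_*\left((\Sigma Y_*)^{\wedge\bullet}/\Delta^\bullet Y_*\right)$, equipped with the $\coLie\{1\}$-coaction induced by the connecting homomorphisms of the triples $(X_*^{\wedge n},\Delta^n X_*,\Delta^n_{ij}X_*)$.

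Next, Proposition~\ref{p:coformal_susp} says that this $\coLie\{1\}$-comodule is formal, hence quasi-isomorphic to its homology $\tilde H_*\left((\Sigma Y_*)^{\wedge\bullet}/\Delta^\bullet Y_*\right)$ carrying zero differential. In the other direction, Proposition~\ref{p:formal_susp} produces a quasi-isomorphism of $\Com_+$-modules $\tilde C_*\left((\Sigma Y_*)^{\wedge\bullet}\right)\simeq \tilde H_*(\Sigma Y_*)^{\otimes\bullet}$; applying the Koszul duality functor, which preserves quasi-isomorphisms, turns the right-hand side into the strict Koszul dual $\tilde H_*(\Sigma Y_*)^{\otimes\bullet}\circ\coLie\{1\}$, also with zero differential. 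Composing these two zigzags of quasi-isomorphisms of $\coLie\{1\}$-comodules yields a quasi-isomorphism between two objects with zero differential, which is therefore an honest isomorphism
\[
\tilde H_*\left((\Sigma Y_*)^{\wedge\bullet}/\Delta^\bullet Y_*\right) \stackrel{\simeq}{\longrightarrow} \tilde H_*(\Sigma Y_*)^{\otimes\bullet}\circ\coLie\{1\}.
\]

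The main obstacle will be upgrading this pointwise identification to a natural isomorphism over the category $\Sigma(\TOP_*)$. The essential input is that Proposition~\ref{p:coformal_susp} provides coformality not only of objects but also of morphisms: for any pointed map $f\colon\Sigma Y_*\to \Sigma Z_*$, the induced morphism of Koszul duals is quasi-isomorphic to the morphism induced on homology. Combined with the observation, used in the proof of Proposition~\ref{p:coformal_susp}, that the comodules under consideration have \emph{cofree} homology as $\coLie\{1\}$-comodules, this rigidifies the zigzags so that the isomorphism displayed above can be assembled functorially in $\Sigma Y_*\in\Sigma(\TOP_*)$. Concretely, I would work in the subcategory of $\coLie\{1\}$-comodules with cofree (zero-differential) homology, where every quasi-isomorphism is canonically realized by the induced map on cogenerators, and conclude naturality from the naturality of the reduced homology functor $\tilde H_*\colon\Sigma(\TOP_*)\to gVect$ together with the functoriality of $(-)\circ\coLie\{1\}$.
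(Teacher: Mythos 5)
Your proposal is correct and follows essentially the same route as the paper: the paper's one-line proof is to apply the Koszul duality functor to the formality zigzag \eqref{eq:formality_arone} and take homology, which is exactly the composite of quasi-isomorphisms you assemble from Propositions~\ref{p:formal_susp} and~\ref{p:coformal_susp} together with the Arone--Turchin identification of the Koszul dual. Your extra care about naturality (coformality of morphisms plus cofreeness of the homology comodules) is the same mechanism the paper relies on implicitly via the functoriality remarks in the proof of Proposition~\ref{p:formal_susp}.
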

One simply needs to apply the Koszul duality functor to the zigzag~\eqref{eq:formality_arone} and then take the homology. At the starting point we get the left-hand side of~\eqref{eq_PBW_general} and at the end we get the right-hand side. Notice that this corollary describes the rational homology of certain configuration spaces of points in suspensions. 

Now notice that the sequences $\tilde H_*(\Sigma Y_*)^{\otimes\bullet}$ and $\tilde H_* \left((\Sigma Y_*)^{\wedge\bullet}/\Delta^\bullet \Sigma Y_*\right)$ are naturally left modules over the 
commutative operad $\Com$. Indeed, the first one is  freely generated by its arity one
    component $\tilde H_*(\Sigma Y_*)^{\otimes 1}$, while the left $\Com$-module structure on the second one is induced
    by the maps
    \[
    \left( (\Sigma Y_*)^{\wedge m}/\Delta^m \Sigma Y_*\right)\wedge
    \left( (\Sigma Y_*)^{\wedge n}/\Delta^n \Sigma Y_*\right)\longrightarrow
    \left( (\Sigma Y_*)^{\wedge m+n}/\Delta^{m+n} \Sigma Y_*\right)
    \]
    (More generally if a right $\Com_+$-module has a compatible left action by another operad $\mathcal O$, then its Koszul dual also naturally is a left $\mathcal O$-module.)
    
 \begin{prop}\label{pr:left_act}
    The isomorphism~\eqref{eq_PBW_general} respects the left $\Com$ action.
 \end{prop}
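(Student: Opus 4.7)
The plan is to upgrade the zigzag~\eqref{eq:formality_arone} to a zigzag of $(\Com,\Com_+)$-bimodules, i.e., to morphisms respecting the left $\Com$-action in addition to the right $\Com_+$-action. Once this is done, applying Koszul duality — which on a right $\Com_+$-module with a compatible left action by an operad $\mathcal{O}$ produces a right $\coLie\{1\}$-comodule with a compatible left $\mathcal{O}$-action, as recalled in the paragraph before Proposition~\ref{pr:left_act} — converts~\eqref{eq:formality_arone} into a zigzag of left-$\Com$-equivariant quasi-isomorphisms of $\coLie\{1\}$-comodules. Passing to homology in that zigzag then identifies the two sides of~\eqref{eq_PBW_general} as left $\Com$-modules.

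First I would equip every term of~\eqref{eq:formality_arone} with its natural left $\Com$-action: on $\tilde C_*\left((\Sigma Y_*)^{\wedge\bullet}\right)$ and $\tilde C_*\left((S^1)^{\wedge\bullet}\right)$ the action comes from the canonical identifications $X_*^{\wedge m}\wedge X_*^{\wedge n}=X_*^{\wedge(m+n)}$ post-composed with the Eilenberg--Zilber quasi-isomorphism; on the objectwise tensor products of right $\Omega$-modules appearing in the middle of the zigzag by tensoring the two actions; and on $\tilde H_*(Y_*)^{\otimes\bullet}$ by the free commutative multiplication recalled just before the statement of the proposition.

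Next I would verify that each quasi-isomorphism in~\eqref{eq:formality_arone} is $\Com$-equivariant with respect to these actions. The splitting $(\Sigma Y_*)^{\wedge\bullet}=(S^1)^{\wedge\bullet}\wedge Y_*^{\wedge\bullet}$ is tautologically compatible with smash-product multiplication; the Künneth quasi-isomorphism $\tilde C_*(Y_*^{\wedge\bullet})\simeq\tilde C_*(Y_*)^{\otimes\bullet}$ intertwines smash-based and tensor-based multiplications by the usual naturality of Eilenberg--Zilber; and the last map $\tilde H_*(Y_*)^{\otimes\bullet}\to\tilde C_*(Y_*)^{\otimes\bullet}$ is obtained from an arbitrary quasi-isomorphism $\tilde H_*(Y_*)\to\tilde C_*(Y_*)$ by objectwise tensoring and is therefore automatically a morphism of free left $\Com$-modules.

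The main obstacle will be the formality step $\tilde C_*\left((S^1)^{\wedge\bullet}\right)\simeq\tilde H_*\left((S^1)^{\wedge\bullet}\right)$: one needs a $\Com$-equivariant lift of the quasi-isomorphism of right $\Omega$-modules used in the proof of Proposition~\ref{p:formal_susp}. Since $\tilde H_*((S^1)^{\wedge n})\cong \tilde H_*(S^n)$ is one-dimensional in each arity and the free left $\Com$-multiplication sends the canonical generator in arity $m$ tensored with that in arity $n$ to the canonical generator in arity $m+n$ up to sign, such a compatible lift can be constructed inductively on the arity by a standard obstruction argument exploiting the cofreeness of the Koszul dual. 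Once all four maps in~\eqref{eq:formality_arone} are promoted to $(\Com,\Com_+)$-bimodule maps, Koszul duality and passage to homology deliver the statement.
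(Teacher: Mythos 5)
Your proposal is correct and follows essentially the same route as the paper, whose entire proof is the one-line observation that it suffices to check that each map in the zigzag~\eqref{eq:formality_arone} respects the left $\Com$ action. Your extra care with the formality step for $\tilde C_*\left((S^1)^{\wedge\bullet}\right)$ fills in a point the paper leaves implicit, but the underlying argument is the same.
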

 \begin{proof}
It is enough to check that each map in the zigzag~\eqref{eq:formality_arone} respects the left $\Com$ action.
\end{proof}

\subsection{Complexes $\CH^{\Sigma Y_*}(\L_*)$. Proof of Theorem~\ref{t:HP_suspensions2}}\label{ss44}
\sloppy
We define complexes $\CH^{\Sigma Y_*}(\L_*)$ as follows
\begin{multline}\label{eq_CH_general}
\Rmod_{\Com_+}\left( 
 \tilde H_* \left((\Sigma Y_*)^{\wedge\bullet}/\Delta^\bullet \Sigma Y_*\right)\circ\Com_+, cr\,\L_*\right)\simeq\\
 \left( \prod_{n\geq 0} \Hom_{\bbS_n}\left( 
 \tilde H_* \left((\Sigma Y_*)^{\wedge n}/\Delta^n \Sigma Y_*\right), cr\,\L_*(n)\right), d_{Y_*}+d_{\L_*}\right),
 \end{multline}
 where $d_{\L_*}$ is the part of the differential induced by the differential in $\L_*$, and $d_{Y_*}$ 
 is induced by the differential in $\tilde H_* \left((\Sigma Y_*)^{\wedge\bullet}/\Delta^\bullet \Sigma Y_*\right)\circ\Com_+$, which is the Koszul dual $\Com_+$-module to the $\coLie\{1\}$-comodule
 $\tilde H_* \left((\Sigma Y_*)^{\wedge\bullet}/\Delta^\bullet \Sigma Y_*\right)$. Explicitly, if $f\in \Hom_{\bbS_n}\left( 
 \tilde H_* \left((\Sigma Y_*)^{\wedge n}/\Delta^n \Sigma Y_*\right), cr\,\L_*(n)\right)$, one has 
 $d_{Y_*}f\in \Hom_{\bbS_{n+1}}\left( 
 \tilde H_* \left((\Sigma Y_*)^{\wedge n+1}/\Delta^{n+1} \Sigma Y_*\right), cr\,\L_*(n+1)\right)$ is defined
 as follows
 \[
 (d_{Y_*}f)\bigl( \gamma(x_1\ldots x_{n+1})\bigr) = \sum_{1\leq i<j\leq n}f( \gamma_{ij}(x_1\ldots x_{i\sim j}\ldots x_n))\circ_{i\sim j}(x_i\cdot x_j),
 \]
 where $\gamma_{ij}$ is computed from the formula $\circ_{i\sim j} (\gamma)=\gamma_{ij}\otimes [x_i,x_j]^\vee$ of  the $\coLie\{1\}$ coaction.
 
 Now we check that $\CH^{(-)}(\L_*)$ satisfies the properties from Theorem~\ref{t:HP_suspensions2}.
 Firstly, $\CH^{(-)}(\L_*)\colon\TOP_*|_{\Sigma}\to dgVect$ is a well defined functor:
 a pointed map $\Sigma Y_*\to \Sigma Z_*$ induces a map of $\coLie\{1\}$-comodules
 \[
   \tilde H_* ((\Sigma Y_*)^{\wedge\bullet}/\Delta^\bullet \Sigma Y_*) \to
    \tilde H_* ((\Sigma Z_*)^{\wedge\bullet}/\Delta^\bullet \Sigma Z_*).
    \]
 It computes the Hochschild-Pirashvili homology functor by the coformality property, see Proposition~\ref{p:coformal_susp}. Using isomorphism~\eqref{eq_PBW_general} we can define the $m$-th truncation of
    $\tilde H_* \left((\Sigma Y_*)^{\wedge\bullet}/\Delta^\bullet \Sigma Y_*\right)$ 
    as the cofree part cogenerated by $\tilde H_*(\Sigma Y_*)^{\otimes i}$, $i\leq m$. In the Hochschild homology this obviously corresponds to the Hodge filtration defined in Subsection~\ref{ss42}. The map of
    graded quotients is determined by the morphism in homology $f_*\colon\tilde H_*(\Sigma Y)\to \tilde H_*(\Sigma Z)$ due to Corollary~\ref{cor:conf_susp} and Proposition~\ref{pr:left_act} (see also next section,
    where this is shown more explicitly). 
    The splitting of the Hodge filtration over $\Sigma(\TOP_*)$ has been shown in the previous section. 
    
    \sloppy
   Now let us check that the complexes $\CH^{\vee_n S^1}(\L_*)$ coincide with $CH^{\vee_n S^1}(\L_*)$
   defined in Section~\ref{s:CH_vee_n}.  To see this one needs to identify $cr\,\L_*(\bullet)$ with  $N\L_*(\bullet)$ by means of the isomorphism~\eqref{eq:q}. For simplicity let us start with the case $n=1$.
   One has $(S^1)^{\wedge k}/\Delta^k S^1=\vee_{k!}S^k$. Thus,
   \[
   \prod_{k\geq 0} \Hom_{\bbS_k}\left( 
 \tilde H_* \left((S^1)^{\wedge k}/\Delta^k S^1 \right), N\L_*(k)\right)=
 \prod_{k=0}^{+\infty} N\L_*(k)[k] =\Tot \,\L_*\circ (S^1)_\bullet.
 \]
 One can check that the differentials agree. In case of arbitrary $n$, one has $\left(\vee_n S^1\right)^{\wedge k}
 /\Delta^k(\vee_n S^1)=\vee_{k_1+\ldots +k_n=k}\vee_{k!} S^k$, and one similarly gets
 \[
   \prod_{k\geq 0} \Hom_{\bbS_k}\left( 
 \tilde H_* \left((\vee_n S^1)^{\wedge k}/\Delta^k (\vee_n S^1) \right), N\L_*(k)\right)=
 \prod_{k=0}^{+\infty} \,\prod_{k_1+\ldots +k_n=k} N\L_*(k)[k] =\Tot (\,\L_*\circ (\vee_n S^1)_{\underbrace{\bullet\ldots\bullet}_n}).
 \]
   For the last identity, see equation~\eqref{eq_CH_vee_n}.
   
   \begin{rem}\label{rem:action_ident}
   The monoid $\End(F_n)$ describes the homotopy classes of poined 
   self-maps $\vee_n S^1\to\vee_n S^1$ and thus acts on the $\coLie\{1\}$-comodule
   $ \tilde H_* \left((\vee_n S^1)^{\wedge \bullet}/\Delta^\bullet (\vee_n S^1) \right)$. One can check
   that the induced action on $\CH^{\vee_n S^1}(\L_*)$ coincides with the one on $CH^{\vee_n S^1}(\L_*)$
   described explicitly in Section~\ref{s:CH_vee_n}.
   \end{rem}
   
\section{Determining the map of Hochschild-Pirashvili homology from the rational homotopy type of a map}\label{s:rht_map}
\sloppy
    It is clear from the definition that the rational homology type of a space determines the rational higher Hochschild homology. In other words, if $X_*\to W_*$
    is a rational homology equivalence then the induced map $HH^{W_*}(\L_*)\to HH^{X_*}(\L_*)$ is an isomorphism. Similarly, the rational homology type of any map $X_*\to W_*$ determines the
    map in rational Hochschild-Pirashvili homology.  In particular, the rational homotopy type of a map must determine the higher Hochschild homology map. (In fact for suspensions the rational homology and rational homotopy equivalences are the same.)
    In this section we compute how exactly the map of suspensions induces the map of Hochschild complexes. For simplicity we will be assuming that the homology groups of the spaces that we consider are of finite type. Many of the results hold  without this restriction, but require more technical work involving careful colimit arguments. Since the goal is to make it applicable for concrete examples which in practice  always have this property, we concentrate on this case.
    
    \subsection{Determining the map of Koszul duals from the rational homotopy type of a map}\label{ss:determ1}
    First we need to understand how the map of Koszul duals
    \[
   \tilde H_* ((\Sigma Y_*)^{\wedge\bullet}/\Delta^\bullet \Sigma Y_*) \to
    \tilde H_* ((\Sigma Z_*)^{\wedge\bullet}/\Delta^\bullet \Sigma Z_*).
    \]
    is determined by the rational homotopy type of a map $f:\Sigma Y_*\to\Sigma Z_*$. 
    \sloppy
    Any such map  produces a commutative square of  right $\coLie\{1\}$-comodules:
    \begin{equation}\label{eq_sq}
\xymatrix{
  \tilde H_* \left((\Sigma Y_*)^{\wedge\bullet}/\Delta^\bullet Y_*\right)\ar[r]^-\simeq\ar[d] &\tilde H_*(\Sigma Y_*)^{\otimes\bullet}\circ\coLie\{1\}\ar[d] \\
   \tilde H_* \left((\Sigma Z_*)^{\wedge\bullet}/\Delta^\bullet Z_*\right)\ar[r]^-\simeq &\tilde H_*(\Sigma Z_*)^{\otimes\bullet}\circ\coLie\{1\} 
   }
  \end{equation}    
  The horizontal arrows are the isomorphisms from Corollary~\ref{cor:conf_susp}. We are interested in the right vertical map. (Notice that since $f$ is arbitrary and not necessarily a suspension, this right vertical map
  is not determined by the induced map in homology $f_*\colon\tilde H_*(\Sigma Y_*)\to\tilde H_*(\Sigma Z_*)$.)  According to Proposition~\ref{pr:left_act}, the horizontal maps respect the left
  $\Com$ action. It is quite obvious that the left vertical map does so as well. As a consequence, the right vertical map also respects this action.
  Its source is freely generated as a left $\Com$-module  by $\tilde H_*(\Sigma Y_*)^{\otimes 1}\circ\coLie\{1\}$, and its target is cofreely cogenerated as a $\coLie\{1\}$ right comodule by
  $\tilde H_*(\Sigma Z_*)^{\otimes\bullet}$.  As a consequence this map is determined by a map of symmetric sequences
  \[
  \tilde H_*(\Sigma Y_*)^{\otimes 1}\circ\coLie\{1\}\longrightarrow
  \tilde H_*(\Sigma Z_*)^{\otimes\bullet},
  \]
  or equivalently by a map
  \begin{equation}\label{eq_rht}
  \tilde H_*(Y_*)\to \FreeLie\left(\tilde H_*Z_*\right),
  \end{equation}
  where $\FreeLie\left(\tilde H_*Z_*\right)$ denotes the free completed Lie algebra generated by $\tilde H_*Z_*$.
  
  The rational homotopy of a simply connected suspension is a free Lie algebra generated by its reduced homology. We claim that in the simply connected case the map obtained in~\eqref{eq_rht} describes  exactly the map (of generators) of rational homotopy. More generally, when the suspensions are not necessarily simply connected, one can still assign a morphism~\eqref{eq_rht} to the rational homotopy type of a map $f\colon\Sigma Y_*\to\Sigma Z_*$. By Lemma~\ref{l:formality2} any suspension is rationally formal. Thus the induced map of their Sullivan\rq{}s models
  \[
  A_{\Sigma Z_*}\to A_{\Sigma Y_*}
  \]
  is quasi-isomorphic to a map of dg algebras
  \begin{equation}\label{eq:rat_model_map}
  \calA(\calL^c(\tilde H^*\Sigma Z_*))\to H^*\Sigma Y_*,
  \end{equation}
  where the left-hand side is the cofibrant replacement of $H^*\Sigma Z_*$ obtained as the Chevalley-Eilenberg complex $\calA(-)$ of the Harrison complex $\calL^c(-)$ of the (non-unital)
  algebra $\tilde H^*\Sigma Z_*$. Notice that $\calL^c(\tilde H^*\Sigma Z_*)$ is the cofree Lie coalgebra cogenerated by $\tilde H^*Z_*$  (with zero differential).  Its dual vector space is
  exactly $\FreeLie(\tilde H_*Z_*)$. The map of algebras~\eqref{eq:rat_model_map} is determined by its restriction on the space of generators
  \begin{equation}\label{eq:rat_model_gener}
   \calL^c(\tilde H^*\Sigma Z_*)\to \tilde H^* \Sigma Y_*.
   \end{equation}
   
   \begin{prop}\label{pr:thomas}
   For any map $f\colon\Sigma Y_*\to\Sigma Z_*$  of pointed suspensions of finite type, the map~\eqref{eq:rat_model_gener} encoding the rational homotopy type
   of $f$ is dual to the map~\eqref{eq_rht} encoding the homotopy type 
   of the induced map of right $\Com_+$ modules
   \begin{equation}\label{eq:cy_cz}
   \tilde C_*\left((\Sigma Y_*)^{\wedge\bullet}\right) \to \tilde C_*\left((\Sigma Z_*)^{\wedge\bullet}\right).
   \end{equation}
   \end{prop}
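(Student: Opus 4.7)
The plan is to view both sides of the claimed duality as determined by the same linear datum: the restriction of the rational homotopy class of $f$ to (co)generators in a free model.

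First, I would unpack~\eqref{eq:rat_model_gener}. Since $\Sigma Z_*$ is rationally formal (Lemma~\ref{l:formality2}) of finite type with vanishing reduced cup product, the cdga $H^*\Sigma Z_*$ admits $\calA(\calL^c(\tilde H^*\Sigma Z_*))$ as a cofibrant replacement, so a rational homotopy class $f\colon\Sigma Y_* \to \Sigma Z_*$ is represented by the cdga map~\eqref{eq:rat_model_map}, which is in turn determined by its restriction to cogenerators~\eqref{eq:rat_model_gener}. Under linear duality---using the finite-type hypothesis and the identification of the graded dual of $\calL^c(V)$ with $\FreeLie(V^\vee)$---this restriction is equivalent to a linear map $\tilde H_*(Y_*) \to \FreeLie(\tilde H_*(Z_*))$, with the conventional degree shifts absorbing the operadic suspension in $\coLie\{1\}$.

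Second, as explained in the paragraph immediately preceding the proposition, Proposition~\ref{pr:left_act} together with the cofreeness of the target as a $\coLie\{1\}$-comodule already identifies~\eqref{eq_rht} as a linear map with this same source and target. So it only remains to check that these two linear maps coincide.

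That coincidence is the main obstacle. My strategy is naturality plus a reduction to wedges of spheres. Both maps are manifestly natural in $f$ and depend only on the rational homotopy type of $f$ (the first by Quillen--Sullivan theory, the second because the $\Gamma$-module $C_*(X_*^\bullet)$ and its Koszul dual are rational homotopy invariants of $X_*$). By \cite[Theorem~24.5]{FHT1} every suspension of finite type is rationally equivalent to a wedge of spheres, so naturality reduces the verification to the case where $\Sigma Y_*$ and $\Sigma Z_*$ are both such wedges. In that case both sides admit an explicit combinatorial description: the Koszul-dual side is computed from the fact that $(\vee S^{n_i})^{\wedge k}/\Delta^k$ is rationally a wedge of spheres indexed by ordered $k$-tuples of indices, together with the connecting homomorphism recalled before Definition~\ref{d:coformal} that defines the $\coLie\{1\}$-coaction; this returns iterated formal Lie brackets of the basic homology classes. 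The Sullivan side is computed by dualising the formal Lie coalgebra structure on $\calL^c(\tilde H^*\Sigma Z_*)$ against $\tilde H^*\Sigma Y_*$, producing the same formal brackets. The two prescriptions are exchanged by the operadic Koszul duality of the pair $(\Com,\Lie)$, so the two linear maps agree on every basis element; naturality then extends the identification to all pointed maps between suspensions of finite type.
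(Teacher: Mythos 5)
Your reduction of both sides to a single linear map $\tilde H_*(Y_*)\to \FreeLie(\tilde H_*(Z_*))$ is correct and matches the setup the paper establishes in the paragraphs preceding the proposition; the finite-type duality between $\calL^c(\tilde H^*\Sigma Z_*)$ and $\FreeLie(\tilde H_*Z_*)$ is also fine. The gap is in the "coincidence" step. Passing to wedges of spheres via rational equivalence is legitimate but does not actually shrink the problem: every map of finite-type suspensions is rationally a map of wedges of spheres, and the maps $g\colon \vee_i S^{n_i}\to\vee_j S^{m_j}$ you must then handle range over all of $\pi_*(\vee_j S^{m_j})\otimes\Q\cong\FreeLie(\tilde H_*Z_*)$, i.e.\ over all iterated Whitehead products. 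The assertion that for such a $g$ the induced map on $\tilde H_*\left((-)^{\wedge\bullet}/\Delta^\bullet\right)$ corresponds, under the identification of Corollary~\ref{cor:conf_susp}, to the same Lie word is \emph{exactly} the content of the proposition in this special case, and your proposal does not prove it: recalling that the $\coLie\{1\}$-coaction is a connecting homomorphism tells you what the comodule structure on the target is, but it does not compute what a Whitehead product $[f,g]\colon S^{p+q-1}\to S^p\vee S^q$ does to the smash-power quotients. The sentence "the two prescriptions are exchanged by the operadic Koszul duality of the pair $(\Com,\Lie)$" names the phenomenon rather than establishing it.

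For comparison, the paper's proof avoids any topological computation with Whitehead products. It starts from the Sullivan-model description of $f$, i.e.\ the cdga map \eqref{eq:rat_model_map}, and observes (as in Lemma~\ref{l:formality1}) that the induced map of $\Com_+$-modules \eqref{eq:cy_cz} is modelled by
$(\tilde H_*\Sigma Y_*)^{\otimes\bullet}\to \left(\tilde\hChev(\FreeLie(\tilde H_*Z_*))\right)^{\hat\otimes\bullet}$,
whose target receives a quasi-isomorphism from $(\tilde H_*\Sigma Z_*)^{\otimes\bullet}$. The key step is then the explicit construction of a left inverse to the latter arrow after Koszul dualization, defined on cogenerators by projecting $\tilde\hChev(\FreeLie(\tilde H_*Z_*))$ onto $\FreeLie(\tilde H_*Z_*)[-1]$, then onto brackets of length $n$, and pairing $\Lie(n)$ against $\coLie(n)$; composing gives precisely the map cogenerated by the dual of \eqref{eq:rat_model_gener}. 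If you want to salvage your route, you would need to supply the missing computation for Whitehead products (essentially a James/Hilton--Milnor argument on the quotients $(\vee S^{m_j})^{\wedge k}/\Delta^k$); the algebraic route is shorter and is what the paper does.
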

   
   \begin{proof}
   Arguing as in the proof of Lemma~\ref{l:formality1}, the map of right $\Com_+$ modules~\eqref{eq:cy_cz} is equivalent to the map
   \begin{equation}\label{eq:thomas1}
   (\tilde H_*\Sigma Y_*)^{\otimes \bullet}\to \left(\tilde\calA(\calL^c(\tilde H^*\Sigma Z_*))^{\otimes\bullet}\right)^\vee,
   \end{equation}
where $\tilde\calA(-)$ denotes the augmented part of $\calA(-)$; \lq\lq{}$^\vee$\rq\rq{} denotes taking the dual of a graded vector space. The map~\eqref{eq:thomas1} in each arity is the dual of a tensor power 
of~\eqref{eq:rat_model_map}.  The right-hand side of~\eqref{eq:thomas1} can also be expressed as $\left(\tilde \hChev(\FreeLie(\tilde H_* Z_*))\right)^{\hat\otimes\bullet}$, where $\tilde \hChev(-)$ denotes the completed augmented Chevalley-Eilenberg  complex (of a completed Lie algebra $\FreeLie(\tilde H_* Z_*)$); \lq\lq{}$\hat\otimes$\rq\rq{} denotes the completed tensor product.

One has a zigzag of right $\Com_+$-modules
\[
 (\tilde H_*\Sigma Y_*)^{\otimes \bullet}\to 
\left(\tilde \hChev (\FreeLie(\tilde H_* Z_*))\right)^{\hat\otimes\bullet}
 {\stackrel \simeq\longleftarrow}
 (\tilde H_*\Sigma Z_*)^{\otimes \bullet},
 \]
  where the right arrow is an equivalence. We get a zigzag of their Koszul duals:
\begin{equation}\label{eq:thomas2}
(\tilde H_*\Sigma Y_*)^{\otimes \bullet}\circ\coLie\{1\}\to 
\left(\tilde \hChev(\FreeLie(\tilde H_* Z_*))\right)^{\hat\otimes\bullet}\circ\coLie\{1\}{\stackrel \simeq\longleftarrow}
 (\tilde H_*\Sigma Z_*)^{\otimes \bullet}\circ\coLie\{1\},
 \end{equation}
 We claim that the right arrow has a natural left inverse. In order to construct this left inverse
 \[
 \left(\tilde \hChev(\FreeLie(\tilde H_* Z_*))\right)^{\hat\otimes\bullet}\circ\coLie\{1\}
  {\stackrel \simeq\longrightarrow}
 (\tilde H_*\Sigma Z_*)^{\otimes \bullet}\circ\coLie\{1\}
 \]
it is enough to define a map of their (co)generators
 \[
  \left(\tilde \hChev(\FreeLie(\tilde H_* Z_*))\right)^{\hat\otimes 1}\circ\coLie\{1\}
  \longrightarrow
 (\tilde H_*\Sigma Z_*)^{\otimes \bullet}.
 \] 
 In arity $n$ the latter map of symmetric sequences is defined as the following composition
 \begin{multline*}
  \tilde \hChev(\FreeLie(\tilde H_* Z_*))\otimes\coLie\{1\}(n)
   \to
  \FreeLie(\tilde H_* Z_*)[-1]\otimes\coLie\{1\} (n)\to \\
  \Lie(n)\otimes_{\bbS_n}(\tilde H_*\Sigma Z_*)^{\otimes n}\otimes \coLie(n) \to (\tilde H_*\Sigma Z_*)^{\otimes n}.
 \end{multline*}
 The first map is induced by the 
  projection on cogenerators $\tilde \hChev(\FreeLie(\tilde H_* Z_*))\to \FreeLie(\tilde H_* Z_*)[-1]$.
The second map is obtained by projecting
 $\FreeLie(\tilde H_*Z_*)$ onto its subspace spanned by brackets of length $n$. The last map takes into account the duality between the spaces $\Lie(n)$ and $\coLie(n)$:
 \[
L\otimes h_1\otimes\ldots \otimes h_n\otimes L' \mapsto \sum_{\sigma\in\bbS_n}(\sigma L,L') h_{\sigma_1}\otimes\ldots\otimes h_{\sigma_n}.
\]
 To finish the proof we notice that the composite of the first arrow in~\eqref{eq:thomas2}  and the constructed inverse is the map
 \[
 (\tilde H_*\Sigma Y_*)^{\otimes \bullet}\circ\coLie\{1\}\to 
 (\tilde H_*\Sigma Z_*)^{\otimes \bullet}\circ\coLie\{1\}
 \]
 (co)generated by  the map dual do~\eqref{eq:rat_model_gener}.
 \end{proof}
 
\subsection{Determining map of Hochschild-Pirashvili homology}\label{ss:determ2}
In this subsection we describe how the map
\begin{equation}\label{eq:rhmap}
\tilde H_*Y_*\to \FreeLie(\tilde H_*Z)
\end{equation}
encoding the rational homotopy type of $f\colon\Sigma Y_*\to\Sigma Z_*$, determines the map of higher Hochschild complexes $\CH^{(-)}(-)$ (in fact we will work with $\gr\, \CH^{(-)}(-)$ instead).
For simplicity we will be assuming that $Y_*$ and $Z_*$ are of finite type and we will only look at the case $\L_*=M\otimes \Chev(\alg g)^{\otimes \bullet}$, where $\alg g$ is strictly positively graded. Thus we need to describe the induced map
\begin{equation}\label{eq:expl1}
M\,\hat\otimes\, S\left(\tilde H^*Z_*\hat\otimes\alg g\right)\to M\,\hat\otimes \,S\left(\tilde H^*Y_*\hat\otimes\alg g\right).
\end{equation}
Firstly, this map is the tensor product of the identity on the first factor $M$ and a coalgebra homomorphism on the second one. Ergo, it\rq{}s enough to describe its composition with the projection to the space of cogenerators
\begin{equation}\label{eq:CH_map_cogener}
 S\left(\tilde H^*Z_*\hat\otimes\alg g\right)\to \tilde H^*Y_*\hat\otimes\alg g.
\end{equation}
The map~\eqref{eq:rhmap} is a product of maps
\begin{equation}\label{eq:rhmap_n}
\tilde H_*Y\to \Lie(n)\otimes_{\bbS_n}(\tilde H_* Z_*)^{\otimes n}.
\end{equation}
its $n$-th component~\eqref{eq:rhmap_n} can be viewed as an element $\rho_n\in\tilde H^*Y_*\hat\otimes\Lie(n)\otimes_{\bbS_n}(\tilde H_*Z)^{\otimes n}$. This element $\rho_n$ 
contributes only to
\begin{equation}\label{eq:CH_map_cogener_n}
 S^n\left(\tilde H^*Z_*\otimes\alg g\right)\to \tilde H^*Y_*\otimes\alg g.
\end{equation}
in~\eqref{eq:CH_map_cogener}.  The element $\rho_n$ is a sum of elements of the form
 \[
 h^0\otimes L\otimes h_1\otimes\ldots \otimes h_n\in\tilde H^*Y_*\otimes\Lie(n)\otimes_{\bbS_n}(\tilde H_*Z)^{\otimes n}.
 \]
 Each such summand contributes to~\eqref{eq:CH_map_cogener_n} as a map sending
 \[
 (h^1\otimes g_1)\cdot \ldots\cdot (h^n\otimes g_n)\in S^n\left(\tilde H^*Z_*\otimes\alg g\right)
 \]
 to
 \[
 \sum_{\sigma\in\bbS_n}\pm\left(\prod_{i=1}^m (h_i,h^{\sigma_i})\right) h^0\otimes L(g_{\sigma_1},\ldots,g_{\sigma_n}) \in \tilde H^*Y_*\hat\otimes\alg g,
 \]
 where the sign is as usual the Koszul one induced by permutation of elements.
 
 In the examples below we will be omiting the hat sign over the tensor product as the induced map~\eqref{eq:expl1} can always be restricted on the non-completed part $M\otimes S(\tilde H^*(-)\otimes \alg g)$ (where 
 the symmetric power is also taken in the non-completed sense.)

\begin{ex}
Consider the map $S^1\to S^1\vee S^1$ which sends the generator $x$ of $\pi_1S^1$ to the product $y_1y_2$ of generators of $\pi_1( S^1\vee S^1)$. The map~\eqref{eq:rhmap} becomes
\[
x\Q\to \FreeLie(y_1,y_2),
\]
that encodes the map of the primitive part of the Malcev completions~\cite{FHT2} (all generators $x$, $y_1$, $y_2$ are of degree zero). The image of $x$ is described by the 
Baker-Campbell-Hausdorff formula
\[
x\mapsto \ln(e^{y_1}\cdot e^{y_2}).
\]
The map~\eqref{eq:expl1} becomes
\[
M\otimes S(\alg g)\otimes S(\alg g)\to M\otimes S(\alg g)
\]
which sends
\[
m\otimes A\otimes B\mapsto m\otimes A\star B,
\]
where $\star$ is the associative (star) product on $S(\alg g)$ transported from $\U\alg g$ via the Poincar\'e-Birkhoff-Witt isomorphism.
\end{ex}

\begin{ex}
Consider the map $S^2\to S^1\vee S^2$ corresponding to the element $x\cdot y\in \pi_2 ( S^1\vee S^2)$, where $x$ is the generator of $\pi_1 S^1$ and $y$ is the generator of $\pi_2 S^2$.
The map~\eqref{eq:rhmap} in our case is
\[
y\Q\to \FreeLie(x,y),
\]
where $|x|=0$, $|y|=1$,
\[
y\mapsto e^{ad_x}(y).
\]
The induced map~\eqref{eq:expl1}  is
\[
M\otimes S(\alg g)\otimes S(\alg g[1])\to M\otimes S(\alg g[1]),
\]
sending
\[
m\otimes g_1\cdot\ldots \cdot g_k\otimes s^{-1}g'_1\cdot\ldots\cdot s^{-1}g'_{k'}
\mapsto
m\otimes\frac 1{k!}\sum_{\sigma\in\bbS_k} ad_{g_{\sigma_1}}\ldots ad_{g_{\sigma_k}}( s^{-1}g'_1\cdot\ldots\cdot s^{-1}g'_{k'}).
\]
\end{ex}

\begin{ex}
Consider the Hopf map $S^3\to S^2$. On the level of rational homotopy we get a map
\[
y\Q\to \FreeLie(x),
\]
where $|x|=1$, $|y|=2$, and
\[
y\mapsto \frac 12 [x,x].
\]
The induced map of higher Hochschild complexes
\[
M\otimes S(\alg g[1])\to M\otimes S(\alg g[2])
\]
sends
\begin{gather*}
m\otimes s^{-1}g_1\cdot\ldots \cdot s^{-1}g_{2k-1}\mapsto 0,\\
m\otimes s^{-1}g_1\cdot\ldots \cdot s^{-1}g_{2k}\mapsto m\otimes \frac{1}{2^k k!}\sum_{\sigma\in\bbS_{2k}} \pm s^{-2}[g_{\sigma_1},g_{\sigma_2}]\cdot\ldots\cdot s^{-2}[g_{\sigma_{2k-1}},g_{\sigma_{2k}}]. 
\end{gather*}

\end{ex}

\section{Hochschild-Pirashvili homology for non-suspensions}\label{s:non_susp}
Some of the techniques given in the present paper can also be applied to study the higher Hochschild homology for non-suspensions and maps between them. This section is a short note on how this works in the special case when $\L_*=M\otimes \Chev(\alg g)^{\otimes\bullet}$, where $\alg g$ as usual is a strictly positively graded dg Lie algebra, and the spaces are connected and of finite type.

\begin{thm}\label{th:non_susp}
Assuming a pointed space $X_*$ is connected and of finite type, let $A$ be an augmented non-positively graded augmented commutative dg algebra of finite type quasi-isomorphic to the Sullivan algebra $A_{X_*}$, and $\tilde A$ be its augmentation ideal.\footnote{In our conventions all the complexes have differential of degree $-1$, for which reason the algebras we consider are  non-positively graded.}  Then the Hochschild-Pirashvili homology $HH^{X_*}(\Chev(\alg g),M)$ is computed by the complex $M\hat\otimes \hChev(\tilde A\hat\otimes\alg g)$, where 
$\hChev(\tilde A\hat\otimes\alg g)$ is the completed (with respect to the total homological degree of
elements from $\alg g$) Chevalley-Eilenberg complex of the completed Lie algebra $\tilde A\hat\otimes\alg g$. The differential
has the form
\begin{equation}\label{eq:differential_any_space}
d=d_M+d_{\alg g}+d_A+d_{CE}+\delta,
\end{equation}
where $d_M$, $d_{\alg g}$, $\delta$ are as those from~\eqref{eq:differential2}, $d_A$ is induced by the differential in $A$, $d_{CE}$ is the Chevalley-Eilenberg differential.
\end{thm}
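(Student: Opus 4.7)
The plan is to reduce the computation of $HH^{X_*}(\Chev(\alg g),M)$ to a derived mapping space of non-counital cocommutative dg coalgebras, and then identify that mapping space with a Chevalley complex via Koszul duality between cocommutative coalgebras and Lie algebras.

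First, I would invoke the same zigzag construction used in the proof of Lemma~\ref{l:formality1} (which only needs the multiplicative comparison $A_{X_*^\bullet}\simeq A^{\otimes\bullet}$, and not any formality) to obtain a zigzag of quasi-isomorphisms of right $\Gamma$-modules $C_*(X_*^\bullet)\simeq (A^{\otimes\bullet})^\vee$. Passing to cross-effects as in Subsection~\ref{ss41} produces a corresponding zigzag of right $\Omega$-modules $\tilde C_*(X_*^{\wedge\bullet})\simeq (\tilde A^\vee)^{\otimes\bullet}$, where the $\Omega$-action on the right-hand side is the standard one coming from viewing $\tilde A^\vee$ as a non-counital cocommutative dg coalgebra (dual to the product on $\tilde A$). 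Likewise $cr\,\L_*\cong M\otimes \widetilde{\Chev(\alg g)}^{\otimes\bullet}$, with $\Omega$-action from the cofree cocommutative coalgebra $\widetilde{\Chev(\alg g)}=S^+(\alg g[1])$; the $\Chev(\alg g)$-comodule structure on $M$ will re-enter as the $\delta$-term of the final differential, exactly as in~\eqref{eq:differential2} via the normalization corrections~\eqref{eq_right_inf_bim}. Hence
\[
HH^{X_*}(\L_*)\simeq H\,\hRmod_\Omega\bigl((\tilde A^\vee)^{\otimes\bullet},\, M\otimes\widetilde{\Chev(\alg g)}^{\otimes\bullet}\bigr).
\]

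The key structural observation is that for any two non-counital cocommutative dg coalgebras $C_1,C_2$, natural transformations of right $\Omega$-modules $C_1^{\otimes\bullet}\to C_2^{\otimes\bullet}$ are in bijection with non-counital cocommutative coalgebra morphisms $C_1\to C_2$: a natural transformation is determined by its arity-one component, and $\Omega$-equivariance is exactly the coalgebra compatibility. Applied underived with $C_2=\widetilde{\Chev(\alg g)}$ cofree over $\alg g[1]$, such morphisms correspond to linear maps $\tilde A^\vee\to\alg g[1]$, i.e., to elements of $(\tilde A\hat\otimes\alg g)[1]$; tensored with $M$ this is precisely the linear (first symmetric power) part of the target complex $M\hat\otimes\hChev(\tilde A\hat\otimes\alg g)$.

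To derive the above, I would replace $\tilde A^\vee$ by its bar--cobar resolution $\hChev(\calL(\tilde A^\vee))$ in cocommutative dg coalgebras, where $\calL(\tilde A^\vee)=\widehat{\FreeLie}(\tilde A^\vee[-1])$ is the completed Harrison dg Lie algebra. Coalgebra maps from this cofibrant model into $\hChev(\alg g)$ then correspond, via the bar--cobar adjunction, to $L_\infty$-morphisms of dg Lie algebras $\calL(\tilde A^\vee)\to\alg g$, equivalently to Maurer--Cartan elements of the convolution dg Lie algebra $\Hom(\tilde A^\vee,\alg g)$. Under the finite-type hypothesis, this convolution Lie algebra is canonically $\tilde A\hat\otimes\alg g$, with bracket $[a\otimes g, a'\otimes g']=\pm\,aa'\otimes [g,g']$, the product on $\tilde A$ arising by duality from the comultiplication on $\tilde A^\vee$. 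The completed Chevalley complex $\hChev(\alg h)$ represents the formal Maurer--Cartan functor of a dg Lie algebra $\alg h$; applying this with $\alg h=\tilde A\hat\otimes\alg g$ and tensoring with $M$ (regarded as a $\tilde A\hat\otimes\alg g$-module via the $\alg g$-module structure on $M$ extracted from the $\Chev(\alg g)$-coaction) yields the complex $M\hat\otimes \hChev(\tilde A\hat\otimes\alg g)$. The decomposition $d=d_M+d_{\alg g}+d_A+d_{CE}+\delta$ is then read off: $d_M,d_{\alg g},d_A$ are the internal differentials on $M$, $\alg g$, $A$; $d_{CE}$ is the Chevalley--Eilenberg piece encoding the bracket on $\tilde A\hat\otimes\alg g$; and $\delta$ is the action of $\tilde A\hat\otimes\alg g$ on $M$, exactly as in~\eqref{eq:differential2}.

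The main technical obstacle is the derived-Hom identification in the third paragraph. One must set up the Koszul-duality / bar--cobar framework for non-counital cocommutative dg coalgebras and Lie algebras rigorously enough to conclude that the derived coalgebra mapping space into a cofree coalgebra is represented by the Chevalley complex of the convolution Lie algebra, verify that under finite type this convolution Lie algebra is indeed $\tilde A\hat\otimes\alg g$ with the prescribed bracket, and carefully handle the completion subtleties arising from $\alg g$ being only strictly positively graded rather than bounded.
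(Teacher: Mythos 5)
Your first paragraph matches the paper's setup and is fine: the zigzag from the proof of Lemma~\ref{l:formality1} (minus its last, formality-using step) identifies $\tilde C_*(X_*^{\wedge\bullet})$ with $(\tilde A^\vee)^{\otimes\bullet}$ as right $\Omega$-modules, and the $\delta$-term of the differential does arise from the normalization corrections~\eqref{eq_right_inf_bim}. The proof breaks in your second paragraph, on which everything else rests. The claim that natural transformations of right $\Omega$-modules $C_1^{\otimes\bullet}\to C_2^{\otimes\bullet}$ are in bijection with coalgebra morphisms $C_1\to C_2$ is false: such a transformation is a family of $\bbS_k$-equivariant maps $\eta_k\colon C_1^{\otimes k}\to C_2^{\otimes k}$ commuting with the comultiplication-induced structure maps, and these conditions constrain $\eta_k$ only on the images of the iterated comultiplications of $C_1$; the arity-one component determines nothing outside those images. (Extreme case: if $C_1$ has zero comultiplication, \emph{every} equivariant family $\{\eta_k\}$ is an $\Omega$-module map, whereas coalgebra maps $C_1\to C_2$ are merely linear maps into the primitives of $C_2$.) In fact it is essential that $\hRmod_\Omega\bigl((\tilde A^\vee)^{\otimes\bullet},cr\,\L_*\bigr)$ \emph{not} be a coalgebra mapping space: the contributions of the arities $\geq 2$ of the source, which your bijection discards, are exactly what produce the higher symmetric powers $S^n(\tilde A[-1]\hat\otimes\alg g)$ in the answer. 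There is a second, independent slip at the end of your third paragraph: that $\hChev(\alg h)$ ``represents the formal Maurer--Cartan functor'' means that coalgebra maps $D\to\hChev(\alg h)$ out of a conilpotent $D$ correspond to $\mathrm{MC}$ elements of $\Hom(\bar D,\alg h)$; it does not say that the derived \emph{mapping complex} of coalgebras is $\hChev(\alg h)$ --- for $\alg h$ abelian the former is essentially a set of points while the latter is a completed symmetric algebra. So you chain together two false identifications that happen to land on the correct formula.

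The paper's actual argument avoids coalgebra mapping spaces entirely and is the same right-module Koszul duality computation as in Subsection~\ref{ss41}: take the cofibrant replacement $(\tilde A^\vee)^{\otimes\bullet}\circ\coLie\{1\}\circ\Com_+$ of the source as a right $\Com_+$-module and compute $\Rmod_{\Com_+}$ into $cr\,\L_*$, which for $\L_*=M\otimes\Chev(\alg g)^{\otimes\bullet}$ yields $M\hat\otimes\hChev(\tilde A\hat\otimes\alg g)$ arity by arity, exactly as in Remark~\ref{r:grCH_C_M}. The only new feature compared with the suspension case is that the right $\Com_+$-action on $(\tilde A^\vee)^{\otimes\bullet}$ (dual to the product on $\tilde A$) is no longer trivial, so the term of the Koszul differential that lets a cobracket act on the source from the right no longer vanishes; this is precisely the term $d_{CE}$. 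To repair your proof, replace your second and third paragraphs by this computation (your instinct to relate the answer to Lie-theoretic base change is realized correctly, but only a posteriori, in Proposition~\ref{p:non_susp} of the paper).
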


\begin{proof}
This complex is constructed in the same way as the higher Hochschild complexes for suspensions, see Subsection~\ref{ss41}. The extra term  $d_{CE}$ in the differential appears due to the fact that the $\Com_+$ action on $(\tilde A^\vee)^{\otimes \bullet}$ is now non-trivial.
\end{proof}

The result of this theorem is partially known to experts. It appeared explicitly for spheres and surfaces respectively in~\cite[Theorem~3]{Ginot} and \cite[Theorem~4.3.3]{GTZ0}, 
see also~\cite{AyFr} for a similar implicit statement in case
$X$ is a manifold. Notice also that in case $M=\Chev(\alg g)$ (i.e., when considering unpointed version of higher Hochschild homology) the obtained higher 
Hochschild complex is the completed Chevalley-Eilenberg complex $\hChev(A\hat \otimes \alg g)$. 
As application of this example, in case the dimension of $X$ is less than the connectivity of $Y$, the space $Y^X$ of continuous maps $Y\to X$ has homology with any coefficients described as 
$H_*(Y^X)\simeq HH^X(C_*(Y^\bullet))$, see~\cite{PatrasThomas,pirash00}. On the other hand,
the rational homotopy type of $Y^X$ is described by the dg Lie algebra $A\hat\otimes L$, where $A$ is a suitable Sullivan model for $X$ and $L$ is a suitabe 
Quillen model for $Y$, see~\cite{BlLaz,BrSz,BuFeMu}.
From this we also recover that $\hChev(A\hat\otimes L)$, i.e., our complex, computes the rational homology of $Y^X$.

\begin{rem}\label{r:non_susp_Hodge}
One can easily see that the $m$th term of the Hodge filtration in 
$
M\hat\otimes \hChev(\tilde A\hat\otimes\alg g) = \prod_{i=0}^{+\infty} M\hat\otimes S^i (\tilde A[-1]
\hat\otimes \alg g)
$
is 
$
F_m M\hat\otimes \hChev(\tilde A\hat\otimes\alg g) = \prod_{i=0}^{m} M\hat\otimes S^i (\tilde A[-1]
\hat\otimes \alg g).
$
\end{rem}

Theorem~\ref{th:non_susp} applied to a suspension $\Sigma Y_*$ of a finite type is exactly the statement of
Remark~\ref{r:grCH_C_M}. Indeed, since $\Sigma Y_*$ is formal one can take $\tilde A=\tilde H^*\Sigma Y_*$ the cohomology algebra, whose product is trivial, and thus the Chevalley-Eilenberg part of the differential  is trivial  $d_{CE}=0$.  The rational homotopy type of a map of suspensions of finite type 
$f\colon\Sigma Y_*\to \colon \Sigma Z_*$ is encoded by a map~\eqref{eq:rat_model_gener}, which is essentially the same as a $\Com_\infty$ map of commutative algebras 
$f^*_\infty\colon \tilde H^*\Sigma Z_*\to \tilde H^*\Sigma Y_*$. In Subsection~\ref{ss:determ2} we show how this map determines a map of higher Hochschild complexes
\[
M\hat\otimes\hChev(\tilde H^*\Sigma Z_*\hat\otimes\alg g)\to
M\hat\otimes\hChev(\tilde H^*\Sigma Y_*\hat\otimes\alg g),
\]
which is the identity on the first factor $M$ and a completed coalgebras map on the second factor. The latter 
map can be regarded as a completed $L_\infty$ morphism 
\[
 \tilde H^*\Sigma Z_*\hat\otimes {\alg g}\to \tilde H^*\Sigma Y_*\hat\otimes{\alg g}.
 \]
 of (completed) abelian Lie algebras.
 
 More generally, a tensor product with a dg Lie algebra is in fact a functor from $\Com_\infty$ algebras to $\L_\infty$ algebras. We will need a completed version of this construction.  Let $\tilde A$ be a negatively graded $\Com_\infty$ algebra of finite type encoding the rational homotopy type of a connected pointed 
 space $X_*$, and let $\alg g$ be a positively graded dg Lie algebra. The completed $L_\infty$ algebra
 structure on $\tilde A\hat \otimes \alg g$ is explicitly described by the structure maps $\mu_n$ defined as 
 composition
 \begin{equation}\label{eq_A_infty}
 \mu_n\colon S^n(\tilde A[-1]\hat\otimes {\alg g} )
 \to \FreeLie^c(\tilde A[-1])\hat\otimes \FreeLie(\alg g)
 \to \tilde A\hat \otimes \alg g,
 \end{equation}
 where $\FreeLie^c(\tilde A[-1])$ is the free Lie coalgebra cogenerated by $A[-1]$ (in other words, 
 it is the Harrison complex $\calL^c(\tilde A)$). The first map is induced by the diagonal
 $\Com(n)\to \coLie(n)\otimes \Lie(n)$. The second map is the $\Com_\infty$ structure on the first factor
 and the $\Lie$ structure on the second. If $\tilde B\to\tilde A$ is a $\Com_\infty$ morphism encoding the rational homotopy type of a pointed map $X_*\to Y_*$, then the induced completed $L_\infty$ map
 $\tilde B\hat\otimes \alg g\to \tilde A\hat\otimes \alg g$ is described by essentially the same formulas
 as~\eqref{eq_A_infty}. Its $n$-th component is the composition
 \begin{equation}\label{eq_A_infty2}
 F_n\colon S^n(\tilde B[-1]\hat\otimes {\alg g} )
 \to \FreeLie^c(\tilde B[-1])\hat\otimes \FreeLie(\alg g)
 \to \tilde A[-1]\hat \otimes \alg g,
 \end{equation}
 where the first map is the same as the fist one in~\eqref{eq_A_infty}. The second map is the tensor product of the $\Com_\infty$ map $\tilde B\to \tilde A$ and the Lie algebra structure map on $\alg g$.
 In  Subsection~\ref{ss:determ2} the corresponding $L_\infty$ map is explained in full detail for the case of
 suspensions $\tilde A=\tilde H^*\Sigma Y_*$, $\tilde B=\tilde H^*\Sigma Z_*$.

\begin{rem}\label{r:l_infty}
For a $\Com_\infty$ algebra $\tilde A$  (non-positively graded and of finite type) consider its dual $\Com_\infty$ coalgebra $\tilde A^\vee$. Then
the $L_\infty$ algebra  $\tilde A\hat\otimes \alg g$ considered above is the $L_\infty$ algebra of derivations
of the zero map of Lie algebras $\calL(\tilde A^\vee)\to \alg g$. 
\end{rem}

\begin{thm}\label{th:non_susp2}
Let $\tilde A$ be a non-positively graded $\Com_\infty$  algebra of finite type encoding the rational homotopy type
of a pointed space $X_*$, then the Hochschild-Pirashvili homology $HH^{X_*}(\Chev(\alg g),M)$ is 
computed by the complex $M\hat\otimes \hChev(\tilde A\otimes\alg g)$, where 
 $\hChev(\tilde A\otimes\alg g)$ is the completed Chevalley-Eilenberg complex of the completed
 $L_\infty$ algebra $\tilde A\hat \otimes \alg g$. The differential has the form~\eqref{eq:differential_any_space}. If $\tilde B\to \tilde A$ is a $\Com_\infty$ morphism 
 (of  non-positively graded $\Com_\infty$ algebras of finite type) encoding the rational homotopy type of a pointed map
 $X_*\to Y_*$, then the induced map in the Hochschild-Pirashvili homology 
\[
HH^{Y_*}(\Chev(\alg g),M)\to HH^{X_*}(\Chev(\alg g),M)
\]
 is computed by the chain map
 \[
 M\hat\otimes \hChev(\tilde B\otimes\alg g) \to M\hat\otimes \hChev(\tilde A\otimes\alg g),
 \]
 which is identity on the first factor $M$ and a completed coalgebra map corresponding to the induced
 completed $L_\infty$ algebras map $\tilde B\hat \otimes \alg g \to \tilde A\hat \otimes \alg g$. 
\end{thm}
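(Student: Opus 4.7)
The plan is to reduce Theorem~\ref{th:non_susp2} to Theorem~\ref{th:non_susp} by combining two homotopy invariance statements: that the higher Hochschild homology depends only on the rational homotopy type of $X_*$, and that the completed Chevalley--Eilenberg complex $M\hat\otimes \hChev(\tilde A\hat\otimes \alg g)$ is invariant, up to quasi-isomorphism, under $\Com_\infty$ quasi-isomorphisms in the argument $\tilde A$.

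First I would pick a strict commutative dg model $\tilde A'$ of $X_*$ (for instance the Sullivan minimal model) and a $\Com_\infty$ quasi-isomorphism $\phi\colon \tilde A \to \tilde A'$; such a rectification exists by standard operadic homotopy theory (see e.g.\ \cite{lodayval}), or one may simply choose $\tilde A$ to come equipped with such a quasi-isomorphism into the Sullivan model. Theorem~\ref{th:non_susp} then identifies $HH^{X_*}(\Chev(\alg g),M)$ with $H_*\bigl(M\hat\otimes \hChev(\tilde A'\hat\otimes \alg g)\bigr)$, where the Chevalley--Eilenberg differential has no higher $L_\infty$ brackets. The key step is then to upgrade this identification to arbitrary $\Com_\infty$-models. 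By formula~\eqref{eq_A_infty2}, $\phi$ induces an $L_\infty$ quasi-isomorphism $\Phi\colon \tilde A\hat\otimes \alg g \to \tilde A'\hat\otimes \alg g$ of completed dg Lie algebras, and this in turn yields a morphism of completed Chevalley--Eilenberg coalgebras $\hChev(\tilde A\hat\otimes \alg g)\to \hChev(\tilde A'\hat\otimes \alg g)$. To verify this is a quasi-isomorphism, I would use the Hodge filtration of Remark~\ref{r:non_susp_Hodge}: both sides are complete with respect to it, and on the associated graded the differential loses the Chevalley--Eilenberg piece and the linear part of $\Phi$ reduces to $\phi_1\hat\otimes \mathrm{id}_{\alg g}$ on each symmetric power. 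The positive grading of $\alg g$ guarantees that the Hodge filtration is bounded below in each internal degree, so the resulting spectral sequence converges and the quasi-isomorphism on $E_1$ lifts to a quasi-isomorphism of filtered complexes. Tensoring with $M$ preserves this quasi-isomorphism.

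For the functoriality statement, given a $\Com_\infty$ morphism $\tilde B\to \tilde A$ modelling a pointed map $X_*\to Y_*$, one passes to strict models on both sides via the previous step and invokes the naturality of Theorem~\ref{th:non_susp}: the construction of $M\hat\otimes \hChev(\tilde A'\hat\otimes \alg g)$ in Subsection~\ref{ss41} via the Koszul/bar machinery is functorial in strict cdga maps. Transporting back along the quasi-isomorphisms of the previous paragraph produces the chain-level map, and the final task is to identify it with the map of completed Chevalley--Eilenberg complexes induced by the $L_\infty$ morphism $\tilde B\hat\otimes \alg g \to \tilde A\hat\otimes \alg g$ of~\eqref{eq_A_infty2}. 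The hardest part will be exactly this last identification at the chain level: one needs to check that the map obtained indirectly through the rectification zigzag agrees on the nose (up to homotopy) with the one given directly by the $L_\infty$ structure. The plan to handle this is to again filter by Hodge degree: on the associated graded the $L_\infty$ brackets decouple from the cobracket part of the Chevalley differential, the map is determined by its components $F_n$ from~\eqref{eq_A_infty2}, and matching the two descriptions component by component becomes a straightforward verification; convergence of the Hodge spectral sequence, justified as above by the positive grading of $\alg g$, then lifts the match to the full filtered complex.
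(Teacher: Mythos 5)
Your proposal is correct and follows essentially the same route as the paper: rectify the $\Com_\infty$ algebras and morphisms to strict cdgas, invoke Theorem~\ref{th:non_susp} for the strict case, and use that a $\Com_\infty$ quasi-isomorphism induces an $L_\infty$ quasi-isomorphism after (completed) tensoring with $\alg g$. The only difference is that you spell out, via the Hodge-filtration spectral sequence, the convergence and chain-level identification details that the paper's three-sentence proof leaves implicit.
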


\begin{proof}
First we check that the statement  of the theorem holds when $\tilde B\to\tilde A$ is a dg commutative algebras map, which is an easy refinement of Theorem~\ref{th:non_susp}. On the other hand hand 
any $\Com_\infty$ algebra (and any $\Com_\infty$ morphism) is quasi-isomorphic to a dg commutative algebra (map of dg commutative algebras). This together with the fact that a $\Com_\infty$ quasi-isomorphism $\tilde A_1\to \tilde A_2$ induces an $L_\infty$ 
quasi-isomorphism $\tilde A_1\hat\otimes \alg g\to \tilde A_2\hat\otimes \alg g$ proves the staement of the theorem.
\end{proof}

The above theorem has the following corollary.

\begin{prop}\label{p:non_susp_no}
For a pointed connected space $X_*$ of finite type, the Hodge filtration  in the higher Hochschild complexes splits for any coefficient $\Gamma$ module $\L_*$ if and only if $X_*$ is rationally homology equivalent to a suspension. 
\end{prop}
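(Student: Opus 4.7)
The plan is to translate the splitting hypothesis into an algebraic condition on a $\Com_\infty$ model of $X_*$ using Theorem~\ref{th:non_susp2}, and to handle the easy direction via Theorem~\ref{t:HP_suspensions2}.

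For the implication $(\Leftarrow)$, I would proceed as follows. Suppose $X_*$ is rationally homology equivalent to a suspension $\Sigma Y_*$ via a zigzag of rational homology equivalences. Since $\Q$ is a field, the Künneth formula guarantees that smash powers of rational homology equivalences are again rational homology equivalences, so the zigzag lifts to a zigzag of arity-wise quasi-isomorphisms of right $\Omega$-modules $\tilde C_*(X_*^{\wedge\bullet})\simeq\tilde C_*((\Sigma Y_*)^{\wedge\bullet})$. Because the Hodge filtration is defined in Subsection~\ref{ss42} purely in terms of truncation functors $tr_m$ on this $\Omega$-module, I obtain a natural isomorphism of filtered chain complexes computing $HH^{X_*}(\L_*)$ and $HH^{\Sigma Y_*}(\L_*)$. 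The splitting of the Hodge filtration for $HH^{\Sigma Y_*}(\L_*)$ is Theorem~\ref{t:HP_suspensions2}, and it transports across the isomorphism.

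For the implication $(\Rightarrow)$, I would specialize the splitting hypothesis to coefficients $\L_*=\Chev(\alg g)^{\otimes\bullet}$ with $\alg g$ a simply connected dg Lie algebra allowed to vary. By Theorem~\ref{th:non_susp2}, $HH^{X_*}(\Chev(\alg g))\simeq \hChev(\tilde A\hat\otimes\alg g)$ where $\tilde A$ is any augmented $\Com_\infty$ model of $X_*$ of finite type, and by Remark~\ref{r:non_susp_Hodge} the Hodge filtration is $F_m=\prod_{i\leq m}S^i(\tilde A[-1]\hat\otimes\alg g)$. The only piece of the differential strictly decreasing the Hodge degree is the Chevalley-Eilenberg part $d_{CE}$, and it is controlled by the higher $L_\infty$ operations on $\tilde A\hat\otimes\alg g$, which in turn are assembled from the higher $\Com_\infty$ operations (Massey products) on $\tilde A$. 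Suppose, for contradiction, that some Massey product $\nu_n\colon \tilde H^*(X_*)^{\otimes n}\to \tilde H^*(X_*)$ with $n\geq 2$ is non-zero on classes $\alpha_1,\ldots,\alpha_n$. Choosing $\alg g=\FreeLie(\xi_1,\ldots,\xi_n)$ with the $\xi_i$ in odd homological degrees tuned so that the iterated bracket $[\xi_1,\ldots,\xi_n]$ is a non-zero free generator of $\alg g$, the element $\alpha_1\xi_1\cdots\alpha_n\xi_n\in S^n(\tilde A[-1]\hat\otimes\alg g)$ maps under $d_{CE}$ onto a non-zero multiple of $\nu_n(\alpha_1,\ldots,\alpha_n)\otimes[\xi_1,\ldots,\xi_n]\in S^1(\tilde A[-1]\hat\otimes\alg g)$, producing an obstruction class between the $n$-th and first layers of the Hodge filtration which cannot be split off, contradicting the hypothesis. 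Hence every Massey product of arity $\geq 2$ on $\tilde H^*(X_*)$ vanishes, so $\tilde A$ is $\Com_\infty$-quasi-isomorphic to $(\tilde H^*(X_*),0)$. The latter is the $\Com_\infty$ model of a wedge of spheres $\bigvee_i S^{n_i}$ corresponding to a graded basis of $\tilde H^*(X_*)$, whence $X_*$ shares its rational cohomology and is therefore rationally homology equivalent to that suspension.

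The main obstacle is making the obstruction argument in the hard direction fully rigorous. Two points require care: first, one must verify that the candidate obstruction $\nu_n(\alpha_1,\ldots,\alpha_n)\otimes[\xi_1,\ldots,\xi_n]$ is genuinely non-cobounding with respect to the remaining pieces $d_M+d_{\alg g}+d_A+\delta$ of the differential, which demands careful degree bookkeeping in the choice of the $\xi_i$ so that the obstruction lies outside the image of lower-arity coboundaries; second, one needs an induction on $n$ ensuring that, having already killed all lower Massey products, the next one becomes visible in the corresponding graded piece. A cleaner alternative, which I would turn to if the direct argument becomes unwieldy, is to invoke the homotopy transfer theorem: replace $\tilde A\hat\otimes\alg g$ by its minimal $L_\infty$ model on cohomology, observe that the Hodge filtration splits if and only if this minimal model has no operations of arity $\geq 2$, and then appeal to universality (varying $\alg g$ through free Lie algebras of increasing size) to conclude that this happens exactly when all $\Com_\infty$ operations on $\tilde H^*(X_*)$ vanish.
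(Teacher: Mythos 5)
Your proposal is correct and follows essentially the same route as the paper: the easy direction via invariance of the Hochschild complexes under rational homology equivalence together with the splitting for suspensions, and the hard direction by tensoring a minimal (zero-differential) $\Com_\infty$ model $\tilde A$ with a free Lie algebra and detecting the first non-trivial higher operation as a non-vanishing higher differential in the Hodge spectral sequence. The technical worries you flag at the end are disposed of in the paper exactly by the choices you hint at: taking $M=\Q$ with trivial coaction and $\tilde A$ with zero differential kills $d_M$, $d_{\alg g}$, $d_A$ and $\delta$, so the only surviving differential is $d_{CE}$ and the $(k-1)$-st spectral sequence differential is non-zero on the nose, with no induction on lower Massey products needed.
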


\begin{proof}
In one direction the statement easily follows from the fact that a rational homology equivalence of spaces induces a quasi-isomorphism of higher Hochschild complexes. Now let $X_*$ be not equivalent to a suspension. It is well known that any $\Com_\infty$ algebra is $\Com_\infty$ quasi-isomorphic to a  one with zero differential \cite[Theorem 10.4.5]{lodayval}. Let $\tilde A$ be such one encoding the rational homotopy type of $X_*$.
Since we assume $X_*$ is not rationally a suspension, $\tilde A$ must have non-trivial (higher) product(s).
Let $k$ be the arity of the first non-trivial product. We choose 
$\L_*=M\otimes\Chev(\alg g)^{\otimes\bullet}$, where $M=\Q$ is the  comodule with the trivial coaction,
and $\alg g$ is a free Lie algebra with $k$ generators. By construction $\tilde A\hat\otimes\alg g$
is an $L_\infty$ algebra with zero differential and whose first non-trivial (higher) bracket has arity $k$. 
Applying  Remark~\ref{r:non_susp_Hodge} we get that the $(k-1)$th differential in the spectral sequence associated
with the Hodge filtration in $ M\hat\otimes \hChev(\tilde A\otimes\alg g)$ is non-zero. Therefore 
the filtration does not split.
\end{proof}

\subsection{Hochschild-Pirashvili homology as "homotopy base change"}
Let us conclude by remarking on a curious algebraic interpretation of the Hochschild-Pirashvili homology in the form described in Theorem \ref{th:non_susp}. First, recall that to any dg commutative algebra $A$ we may associate a functor 
\[
\Phi_A : (\text{Lie algebras}) \to (\text{Lie algebras})
\]
by sending a dg Lie algebra $\alg g$ to the tensor product $\Phi_A(\alg g) := \alg g\otimes A$, with the Lie algebra structure $A$-linearly extended in the obvious manner. We may call this functor $\Phi_A$ "base change", even though this is a misnomer as we do not change the underlying ground ring.
Similarly, if $\alg g$ is a dg Lie algebra and $K$ is an $A$-module, we may define a functor 
\[
\Psi_{A,K} : (\alg g-\text{modules}) \to (\Phi_A(\alg g)-\text{modules})
\] 
by sending a $\alg g$-module $\alg k$ to the $\Phi_A(\alg g)$-module $\Psi_{A,K}(\alg k):=\alg k \otimes K$, with the module structure defined in the obvious manner. We also call the functor $\Psi_{A,K}$ "base change", with the same caveat as above that this is a misnomer.
There is also a topological variant: If the Lie algebra $\alg g$ carries in addition a complete topology compatible with the Lie algebra structure, then $\hat \Phi_A(\alg g):=  \alg g \hat\otimes A$ is likewise equipped with a natural complete filtration. Similarly, if $\alg k$ is equipped with a complete filtration and the action of $A$ is continuous, then $\hat\Psi_{A,K}(\alg k):=\alg k \hat \otimes K$ is a complete (continuous) $\Phi_A(\alg g)$-module.

Now it is well known \cite[chapter 11.3]{lodayval} that there is an adjunction of categories
\[
\Ha : (\text{conilpotent coaugmented dg cocommutative coalgebras}) \leftrightarrows (\text{dg Lie algebras}): \Chev
\]
given by the bar and cobar functors (i.e., the Harrison and Chevalley complex functors), such that for any conilpotent dg coalgebra $C$ the unit of the adjunction $C\to \Chev(\Ha (C))$ is a quasi-isomorphism, and such that for any dg Lie algebra $\alg g$ the counit of the adjunction $\Ha(\Chev(\alg g))\to \alg g$ is a quasi-isomorphism.
Concretely, the functor $\Ha$ takes the Harrison complex (a free Lie algebra) of the cokernel of the coaugmentation, while the functor $\Chev$ takes the Chevalley complex.
Similar functors exist on the the level of comodules. If $C$ is a conilpotent dg cocommutative coalgebra then we have bar and cobar functors 
\begin{align*}
\HaMod : (\text{conilpotent $C$-comodules}) \to (\Ha (C)-\text{modules})  \\
 (\text{conilpotent $\Chev(\Ha (C))$-comodules}) \leftarrow (\Ha (C)-\text{modules}) : \ChevMod .
\end{align*}
Concretely, $\calL_{\rm mod}(M)=\Harr(C;M)$ is the Harrison complex with values in the module $M$, i.e., a free $\Ha (C)$-module generated by $M$ if we disregard the differential. Similarly, $\ChevMod(N)=\Chev(\Ha (C);N)$ is the Chevalley complex with values in $N$, i.e., a cofree $\Chev(\Ha (C))$-comodule cogenerated by $N$ with a natural differential.

There exist versions of the above constructions for complete topological algebras and modules, by replacing tensor products appearing there by a completed version. We denote those completed versions by $\hHa$, $\hChev$ etc. 

The above adjunctions allow us to transport any endofunctor of the category of dg Lie algebras to an endofunctor of the category of conilpotent dg cocommutative coalgebras (and vice versa). The point of this section is to remark that the Hochschild-Pirashvili homology functor is nothing but the (homology of the) well known base change functors above, transported to the category of conilpotent coalgebras via the bar and cobar adjunctions. This gives an algebraically "very simple" interpretation of the Hochschild-Pirashvili homology.
Concretely, let us assume that we are given the following data:
\begin{itemize}
\item A conilpotent complete cocommutative dg coalgebra $C$, for example $C=\Chev(\alg g)$, for 
a dg Lie algebra $\alg g$ as in Theorem~\ref{th:non_susp}, which we endow with the complete  the
decreasing  filtration by degree.
\item A a conilpotent complete $C$-comodule $M$.
\item An augmented dg commutative algebra $A$. For example, we may take such an $A$ from Theorem~\ref{th:non_susp}. We will still denote by $\tilde A$ its augmentation ideal.
\item We let $K=\mathbb{Q}$  be the one-dimensional $A$-module, with the action defined by the augmentation.
\end{itemize}

Then we define a complete cocommutative coalgebra 
\[
C_A := \hChev(\hat\Phi_A(\Ha(C)) ) = \hChev(\Ha(C)\hat \otimes A ) 
\]
and the complete $C_A$-comodule
\[
M_A := \hChevMod(\hat\Psi_{A,K}(\HaMod(M)) ) = \hChevMod(\HaMod(M)\hat \otimes K ). 
\]
Clearly, these constructions are functorial in $A$, $C$ and $M$. We will abusively call these constructions "homotopy base change".
The main statement of this section is then that the complex of Theorem \ref{th:non_susp} computing the Hochschild-Pirashvili homology may be interpreted as "homotopy base change".
\begin{prop}\label{p:non_susp}
For $C=\Chev(\alg g)$ the Chevalley complex of a dg Lie algebra, and $A,M$ as above, the complexes $M_A$ and the complex $( M \hat \otimes \hChev(\tilde A \hat\otimes \alg g),d)$ of Theorem \ref{th:non_susp} are quasi-isomorphic. 
\end{prop}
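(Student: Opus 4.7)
The plan is to unwind $M_A$ as a Chevalley complex with coefficients, reduce to $\alg g\hat\otimes A$ via the bar--cobar quasi-isomorphism, apply a Hochschild--Serre splitting for the semidirect product decomposition of $\alg g\hat\otimes A$, and then collapse the resulting outer $\alg g$-Chevalley factor against the free $U(\Ha(C))$-factor of $\HaMod(M)$ using the Koszul resolution of $\Q$.

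Unwinding, $M_A=\hChevMod(\HaMod(M)\hat\otimes K)$ is the completed Chevalley complex of the dg Lie algebra $\Ha(C)\hat\otimes A$ with coefficients in the module $\HaMod(M)\hat\otimes K$. Since $K=\Q$ is killed by the augmentation of $A$, the ideal $\Ha(C)\hat\otimes\tilde A$ acts trivially on the coefficients. The counit of the Lie bar--cobar adjunction $\Ha(\Chev(\alg g))\to\alg g$ is a quasi-isomorphism of dg Lie algebras, and tensoring with $A$ yields a quasi-isomorphism $\Ha(C)\hat\otimes A\to\alg g\hat\otimes A$ compatible with the action on coefficients, so one may work with $\alg g\hat\otimes A$ throughout.

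Next I would exploit the semidirect product $\alg g\hat\otimes A=\alg g\ltimes(\alg g\hat\otimes\tilde A)$ coming from $A=\Q\oplus\tilde A$. The chain-level Hochschild--Serre identification for a semidirect product gives
\[
\hChev(\alg g\hat\otimes A;\,N)\cong\hChev\bigl(\alg g;\,N\hat\otimes\hChev(\alg g\hat\otimes\tilde A)\bigr),
\]
valid for any module $N$ on which the ideal acts trivially, with $\alg g$ acting diagonally (by its action on $N$ and by the adjoint action on the ideal extended to its Chevalley complex). Applied to $N=\HaMod(M)\hat\otimes K\simeq U\alg g\otimes M$ and combined with the Koszul-resolution identity $\hChev(\alg g;\,U\alg g\otimes V)\simeq V$ (the antipode of $U\alg g$ converts a diagonal action into a left action on the $U\alg g$-factor alone, and $\hChev(\alg g)\otimes U\alg g$ is then the Koszul resolution of $\Q$), this collapses the outer Chevalley against the $U\alg g$-factor and produces $M\hat\otimes\hChev(\tilde A\hat\otimes\alg g)$, matching Theorem~\ref{th:non_susp} as a graded vector space.

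The main technical hurdle is matching, term by term, the five components of the differential~\eqref{eq:differential_any_space}. The internal differentials $d_M$, $d_{\alg g}$ and $d_A$ are straightforward, and the term $d_{CE}$ on $\tilde A\hat\otimes\alg g$ comes directly from the Chevalley differential of the ideal in the Hochschild--Serre splitting. The most delicate term is $\delta$: it encodes the $C$-coaction on $M$ composed with the projection $\pi:\Chev(\alg g)\to\alg g$, and has to be traced to the twisted internal differential of $\HaMod(M)=U(\Ha(C))\otimes M$ (which is precisely the cobar differential encoding this coaction), specifically to how this twist survives the Koszul collapse as a differential mixing $M$ with $\hChev(\tilde A\hat\otimes\alg g)$ via the adjoint $\alg g$-action. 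A spectral-sequence argument from the Koszul filtration, combined with the antipode change of variables $U\alg g\otimes V_{\mathrm{diag}}\cong U\alg g\otimes V_{\mathrm{triv}}$, should make this precise; this is standard Koszul-duality bookkeeping, if tedious in the completed setting.
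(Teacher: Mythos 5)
Your skeleton is the paper's: unwind $M_A$ as $\hChev(\Ha(C)\hat\otimes A;\,\Harr(C;M)\otimes K)$, use $A=\Q\oplus\tilde A$ to peel off a $\hChev(\Ha(C)\hat\otimes\tilde A)$ factor (your ``Hochschild--Serre'' step is exactly the paper's graded tensor decomposition \eqref{equ:Cisomorphism}, plus the observation that the ideal kills $K$), collapse the remaining free-coefficient Chevalley factor onto $M$, and invoke the counit $\Ha(C)\to\alg g$. The genuine difference is the \emph{direction} of the collapse, and that is exactly where your argument is soft. You run it as a projection $\hChev(\alg g;\,U\alg g\otimes V)\to V$ and then have to recover the term $\delta$ of \eqref{eq:differential_any_space} by a transfer/spectral-sequence argument that you explicitly defer; a priori such a transfer can produce higher correction terms, and a spectral-sequence comparison by itself yields equality of homology, not the zigzag of chain maps the statement asks for. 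The paper goes the other way: the unit $M\to\Chev(\Ha(C);\Harr(C;M))$ is a quasi-isomorphism of $\Chev(\Ha(C))$-comodules, so tensoring it into the decomposition gives an explicit chain map from $\bigl(\hChev(\Ha(C)\hat\otimes\tilde A)\hat\otimes M,\ d_1+d_M+d_{mixed}\bigr)$ into $M_A$, and the mixed term (coaction followed by bracket) is visibly $\delta$ --- no bookkeeping needed. If you want to keep your direction, you should at least produce the explicit contracting homotopy for the Koszul resolution and check it commutes with the twist; the paper's inclusion is the cheaper route.

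A second, smaller point: your early replacement of $\Ha(C)\hat\otimes A$ by $\alg g\hat\otimes A$ ``throughout'' does not typecheck as stated, because the coefficients $\Harr(C;M)\otimes K$ form a $U(\Ha(C))$-module, not an $\alg g\hat\otimes A$-module, so the target Chevalley complex is not defined until you derive-induce the coefficients along $U(\Ha(C))\to U\alg g$. The paper sidesteps this by applying $\Ha(C)\to\alg g$ only at the very last step \eqref{equ:Cisomorphism2}, after the coefficient module has already been reduced to $M$, on which everything acts through the coaction.
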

\begin{proof}
Explicitly, the complex $M_A$ has the form 
\[
\hChev(\Ha(C)\hat \otimes A;  \Harr(C; M)\otimes K )
\]
where $\hChev(-;-)$ denotes the (completed) Chevalley complex with values in the second argument, and $\Harr(-; -)$ denotes the Harrison complex. Using the augmentation we may now split $A=\mathbb{Q}\oplus \tilde A$, where $\tilde A$ is the kernel of the augmentation. Using this splitting we find the identification of graded vector spaces (recall that $K=\mathbb{Q}$)
 \begin{equation}\label{equ:Cisomorphism}
\hChev(\Ha(C)\hat\otimes A;  \Harr(C; M)\otimes K )
\cong 
\hChev(\Ha(C)\hat \otimes \tilde A)\hat \otimes \Chev(\Ha(C);  \Harr(C; M) ).
\end{equation}
Note however, that this identification is not an identification of complexes (yet). The differential on the right-hand side is composed of two terms: the differential $d_1$ of the left-hand tensor factor and the differential $d_2$ of the right-hand tensor factor. The differential on the left-hand side of \eqref{equ:Cisomorphism} on the other hand has an additional term $d_{mixed}$ from the Chevalley differential, which is obtained by taking the coaction of $\Chev(\Ha(C);  \Harr(C; M) )$ followed by a Lie bracket. Note that this term resembles the term $\delta$ in Theorem \ref{th:non_susp}.
Note that we have a quasi-isomorphism of $\Chev(\Ha( C))$-comodules
\[
M \to \Chev(\Ha(C);  \Harr(C; M) ).
\]
Hence we obtain a quasi-isomorphism of complexes
 \begin{equation}\label{equ:Cisomorphism1}
(\hChev(\Ha(C)\hat\otimes \tilde A) \otimes M, d_1+d_M+d_{mixed})
\stackrel{\sim}{\to}
(\hChev(\Ha(C)\hat\otimes \tilde A) \hat\otimes \Chev(\Ha(C);  \Harr(C; M) ), d_1+d_2+d_{mixed})
\cong M_A
,
\end{equation}
where the part of the differential $d_{mixed}$ on the left-hand complex is defined as before by taking the coaction on $M$ followed by a Lie bracket with a factor of $\hChev(\Ha(C)\hat\otimes \tilde A)$.

Furthermore, since $C=\Chev(\alg g)$ we have a quasi-isomorphism of dg Lie algebras
\[
\Ha(C) \to \alg g.
\]
Hence we obtain a quasi-isomorphism
 \begin{equation}\label{equ:Cisomorphism2}
(\hChev(\Ha(C)\hat\otimes \tilde A)\hat \otimes M, d_1+d_M+d_{mixed})
\stackrel{\sim}{\to}
(\hChev(\alg g\hat \otimes \tilde A) \hat\otimes M, d)
\end{equation}
with the complex considered in Theorem \ref{th:non_susp}.
By \eqref{equ:Cisomorphism1} and \eqref{equ:Cisomorphism2} the Proposition is shown.
\end{proof}

\bibliographystyle{plain}

\begin{thebibliography}{1}

%

\bibitem{AroneTur1}
G.~{Arone}, V.~{Turchin}.
\newblock {On the rational homology of high dimensional analogues of spaces of
  long knots}.
\newblock {\em Geom. Topol.} 18 (2014) 1261--1322.


\bibitem{AroneTur2}
G.~Arone, V.~Turchin.
\newblock {Graph-complexes computing the rational homotopy of high dimensional
  analogues of spaces of long knots}.
\newblock {\em Annales de l\rq{}Institut Fourier}
\newblock 65 (2015), no.~1, 1--62.


\bibitem{AyFr}  D. Ayala, J. Francis. 
  \newblock{Factorization homology of topological manifolds.}
  \newblock  {\em J. Topol.} 8 (2015), no~ 4, 1045--1084. .

\bibitem{BlLaz} J.~Block, A.~Lazarev.  
\newblock{Andr\'e-Quillen cohomology and rational homotopy of function
spaces.}
\newblock{\em Adv. in Math.}
193 (2005), 18--39.

\bibitem{BV} M. R. Bridson, K. Vogtmann. 
\newblock Abelian covers of graphs and maps between outer automorphism groups of free groups.
\newblock {\em Mathematische Annalen} 353 (2012), no.~4, 1069--1102.

 
 \bibitem{BrSz}  E. H. Brown, R. H. Szczarba.
 \newblock{ On the rational homotopy type of function spaces.}
\newblock{\em Trans. Amer. Math. Soc.,}
349 (1997), 4931--4951.

\bibitem{BuFeMu}  U. Buijs, Y.  F\'elix, A. Murillo.
\newblock{$L_\infty$ models of based mapping spaces.}
\newblock{\em  J. Math. Soc. Japan} 63 (2011), no. 2, 503--524.

%
%
%
%

\bibitem{CarlDougDun}
G. Carlsson, C.  Douglas, B. I.  Dundas.
\newblock Higher topological cyclic homology and the Segal conjecture for tori. 
\newblock {\em Adv. Math.} 226 (2011), no. 2, 1823--1874. 


%
%

\bibitem{FHT1} Y. F\'elix, S. Halperin,  J.-C. Thomas.
\newblock{Rational homotopy theory.}
\newblock{\em Graduate Texts in Mathematics}, 205. Springer-Verlag, New York, 2001. xxxiv+535 pp. 

\bibitem{FHT2} Y. F\'elix, S. Halperin,  J.-C. Thomas.
\newblock{Rational homotopy theory II.}
\newblock{\em  World Scientific Publishing Co. Pte. Ltd.}, Hackensack, NJ, 2015. xxxvi+412 pp.

\bibitem{Fresse1}
 B. Fresse.
 \newblock{Modules over operads and functors.} 
 \newblock{\em Lecture Notes in Mathematics}, 1967. Springer-Verlag, Berlin, 2009. x+308 pp. 
 
%


\bibitem{Gersten}
S.~M. Gersten.
\newblock A presentation for the special automorphism group of a free group.
\newblock {\em J. Pure Appl. Algebra}, 33(3):269--279, 1984.

\bibitem{GerstSchack}
M. Gerstenhaber, S. D. Schack. 
\newblock A Hodge-type decomposition for commutative algebra cohomology.
\newblock {\em J. Pure Appl. Algebra}
48 (1987), no. 3, 229--247.

\bibitem{Ginot} G. Ginot. 
  \newblock{Notes on factorization algebras, factorization homology and applications.}
    \newblock  Mathematical aspects of quantum field theories, 429--552, 
    \newblock {\em Math. Phys. Stud.}, Springer, Cham, 2015..

 \bibitem{GTZ0}
 G. Ginot, T.  Tradler, M.  Zeinalian. 
 \newblock{A Chen model for mapping spaces and the surface product.}
 \newblock{\em Ann. Sci. Ec. Norm. Sup\'er.} (4) 43 (2010), no. 5, 811--881.

\bibitem{GTZ} G. Ginot, T. Tradler, M. Zeinalian.
\newblock Higher Hochschild homology, topological chiral homology and factorization algebras.
\newblock {\em  Comm. Math. Phys.} 326 (2014), no. 3, 635--686. 

%
%
\bibitem{GL} F. Grunewald, A. Lubotzky.
\newblock  Linear representations of the
automorphism group of a free group.
\newblock {\em Geom. Funct. Anal.}
18(5):1564--1608, 2009.




\bibitem{IntJMc}
 M.~Intermont,  B.~Johnson, R.~McCarthy.
 \newblock The rank filtration and Robinson's complex.
 \newblock {\em J. Pure Appl. Algebra} 212 (2008), no. 4, 735--752.

\bibitem{Kielak}
D.~Kielak.
\newblock Outer automorphism groups of free groups: linear and free representations.
\newblock {\em J. Lond. Math. Soc. (2)} 87(3):917--942, 2013.
 
 \bibitem{Kielak2}
D.~Kielak.
\newblock Low dimensional free and linear representations of $\Out(F_3)$.
\newblock  {\em J. Group Theory} 18 (2015), no. 6, 913--949.
%
%
%
%
%

\bibitem{Loday} J.-L. Loday. 
\newblock  Op\'erations sur l\rq{}homologie cyclique des
alg\`ebres commutatives.
\newblock {Invent. Math.}
96 (1989), no. 1,
205--230.

\bibitem{lodayval}
J.-L. Loday and B.~Vallette.
\newblock {\em {Algebraic Operads}}.
\newblock Number 346 in {Grundlehren der mathematischen Wissenschaften}.
  {Springer, Heidelberg}, {2012}.

\bibitem{Lurie} J.~Lurie.
\newblock{On the classification of topological field theories.} 
\newblock{\em Current developments in mathematics}, 2008, 129--280, Int. Press, Somerville, MA, 2009. 

\bibitem{McCool}
J.~McCool.
\newblock A presentation for the automorphism group of a free group of finite
  rank.
\newblock {\em J. London Math. Soc. (2)}, 8:259--266, 1974.

\bibitem{Patras} F.~Patras. 
\newblock Generic algebras and iterated Hochschild homology. 
\newblock{\em J. Pure Appl. Algebra} 162 (2001), no. 2-3, 337--357. 

\bibitem{PatrasThomas} F.~Patras,  J.-C.~Thomas.
\newblock{Cochain algebras of mapping spaces and finite group actions.} 
\newblock{\em Topology Appl.} 128 (2003), no. 2-3, 189--207. 

\bibitem{pira00} T.~Pirashvili, Dold-Kan type theorem for $\Gamma$-groups, {\em Math. Ann}. 318 (2000), no.~2,  277--298.
\bibitem{pirash00} T. Pirashvili, Hodge decomposition for higher order Hochschild homology,
{\em  Ann. Sci. Ecole Norm. Sup } (4) 33 (2000), no. 2, 151--179.

\bibitem{Schl} C. Schlichtkrull.
Higher topological Hochschild homology of Thom spectra. 
{\em J. Topol. } 4 (2011), no. 1, 161--189. 

%

\bibitem{Song} P.-A. Songhafouo Tsopm\'en\'e.
\newblock The rational homology of spaces of long links.
\newblock  Preprint arXiv:1312.7280.

\bibitem{SongTur}   P.-A. Songhafouo Tsopm\'en\'e, V. Turchin.
  \newblock  Hodge decomposition in the rational homology and homotopy of high dimensional string links.
  \newblock Preprint arXiv:1504.00896.



\bibitem{Turchin1}
V.~Turchin.
\newblock Hodge-type decomposition in the homology of long knots.
\newblock {\em J. Topol.}, 3(3):487--534, 2010.

%

\bibitem{TW}
V.~Turchin, T.~Willwacher.
\newblock{Relative (non-)formality of the little cubes operads and the algebraic Cerf lemma},
\newblock arXiv:1409.0163, 2014.

\bibitem{TW2}
V.~Turchin, T.~Willwacher.
\newblock{Commutative hairy graphs and representations of $\Out(F_r)$},
\newblock arXiv:1603.08855, 2016.

\bibitem{WeissEmb} M.~Weiss.
\newblock{Embeddings from the point of view of immersion theory. I.}
\newblock  {\em Geom. Topol.} 3 (1999), 67--101.
%


\end{thebibliography}

\end{document}